\documentclass[12pt]{article}
\usepackage{amsmath}
\usepackage{ntheorem}
\usepackage{graphicx}
\usepackage{epsfig}
\usepackage{epstopdf}
\usepackage{natbib}
\usepackage{url} 
\usepackage{hyperref}
\usepackage{amsfonts}
\usepackage{amsmath,array}
\usepackage{booktabs}
\usepackage{natbib}
\usepackage{amssymb}
\usepackage{graphicx}
\usepackage{algorithm}
\usepackage{algorithmic}
\usepackage{cases}
\usepackage{epstopdf}
\usepackage{float}
\usepackage{geometry, tabularx}
\usepackage{multirow,makecell}
\usepackage{enumerate}
\usepackage{bm}
\usepackage{pifont}
\usepackage{footnote}
\usepackage{float}
\usepackage{titlesec}
\usepackage[titletoc]{appendix}
\usepackage{threeparttable}
\usepackage{setspace}
\usepackage{makecell}
\usepackage{mathrsfs}
\usepackage{color,soul}
\newcommand{\blind}{0}

\newtheorem{thm}{\bf Theorem}      
\newtheorem{lem}{\bf Lemma}

\newtheorem{rem}{\bf Remark}

\newtheorem*{proof}{Proof}

\addtolength{\oddsidemargin}{-.5in}%
\addtolength{\evensidemargin}{-1in}%
\addtolength{\textwidth}{1in}%
\addtolength{\textheight}{1.7in}%
\addtolength{\topmargin}{-1in}%

\begin{document}

\def\spacingset#1{\renewcommand{\baselinestretch}%
{#1}\small\normalsize} \spacingset{1}


\if0\blind
{
  \title{\bf  Partition-Insensitive Parallel  ADMM Algorithm  for High-dimensional  Linear Models}
  \author{Xiaofei Wu \hspace{.2cm}\\
    College of Mathematics and Statistics, Chongqing University, \\ Jiancheng Jiang \hspace{.2cm}\\ Department of Mathematics and Statistics, University of North Carolina at Charlotte, \\
    and \\
    Zhimin Zhang \thanks{Corresponding author. Email: zmzhang@cqu.edu.cn.
     The research of Zhimin Zhang was supported by the National Natural Science Foundation of China [Grant Numbers 12271066, 12171405, 11871121], and the research of Xiaofei Wu was supported by the project of science and technology research program of Chongqing Education Commission of China [Grant Numbers KJQN202302003].}\\
    College of Mathematics and Statistics, Chongqing University.}
  \maketitle
} \fi

\if1\blind
{
  \bigskip
  \bigskip
  \bigskip
  \begin{center}
    {\LARGE\bf Title}
\end{center}
  \medskip
} \fi

\bigskip
\begin{abstract}
The parallel alternating direction method of multipliers (ADMM) algorithms have gained popularity in statistics and machine learning due to their efficient handling of large sample data problems.  However, the parallel structure of these algorithms, based on the consensus problem, can lead to an excessive number of auxiliary variables when applied to high-dimensional data, resulting in {large computational burden}. In this paper, we propose a partition-insensitive parallel framework based on the linearized ADMM (LADMM) algorithm and apply it to solve nonconvex penalized  high-dimensional regression problems.
Compared to existing parallel ADMM algorithms, our algorithm does not rely on the consensus problem, resulting in a significant reduction in the number of variables that need to be updated at each iteration. 
It is worth noting that the solution of our algorithm remains  largely unchanged regardless of how the total sample is divided,  which is  known as partition-insensitivity. Furthermore, under some mild assumptions, {we prove the convergence of the iterative sequence generated by our parallel algorithm}. Numerical experiments on synthetic and real datasets demonstrate the feasibility and validity of the proposed algorithm. We provide a publicly available R software package to facilitate the implementation of the proposed algorithm.

\end{abstract}

\noindent%
{\it Keywords:}  Global convergence; Nonconvex optimization;  Parallel  algorithm; Robust regression;  LADMM
\vfill

\newpage
\spacingset{1.5} 
\section{Introduction}
High-dimensional linear regression models can be written as  the form of `` loss + penalty ", 
\begin{align}\label{hdlr}
\mathop {\arg \min }\limits_{\bm \beta \in \mathbb{R}^p} \quad \sum_{i=1}^{n} \mathcal{L} (y_i - \bm x_i^\top \bm \beta)  + P_\lambda(|\bm \beta|),
\end{align}
{where $\bm{x}_i \in \mathbb{R}^p$ and $y_i \in \mathbb{R}$ denote the $i$-th observation values of the covariates and the response, respectively.}
The loss function $\mathcal{L}: \mathbb R \to \mathbb R^+$ is a versatile function that encompasses various forms, such as least squares, asymmetric least squares (\cite{Newey1987Asymmetric}), Huber loss (\cite{Huber1964Robust}), quantile loss (\cite{Koenker1978Regressions}), and smooth quantile loss (\cite{Jennings1993An} and \cite{Aravkin2014Sparse}). 
 \( P_\lambda(|\bm \beta|) \) represents various nonconvex penalty terms  including SCAD (\cite{Fan2001Variable}), MCP (\cite{Zhang2010Nearly}), Capped-$\ell_1$ (\cite{Zhang2010Analysis}), Snet (\cite{Wang2010Variable}), Mnet (\cite{Huang2016THE}), among others.

Over the past decade, many parallel ADMM algorithms have been proposed to solve (\ref{hdlr}) when dealing with particularly large samples ($n$ is  large) and/or distributed data storage. Examples include \cite{Boyd2010Distributed}, \cite{Yu2017ADMM}, \cite{Yu2017A}, and \cite{Fan2021Penalized}. The parallel algorithm is based on the principle of decomposing a complex optimization problem into smaller and more manageable subproblems that can be solved independently and simultaneously on local machines. Assuming there are \( M \) local machines available, the data \( \bm y = (y_1,y_2,\dots,y_n)^\top\) and \( \bm X  =  (\bm x_1,\bm x_2,\dots,\bm x_n)^\top \) can be divided into \( M \) blocks as,
\begin{align}\label{datasplit}
\bm y = (\bm y_1^\top, \bm y_2^\top, \dots, \bm y_M^\top)^\top, \ \bm X =(\bm X_1^\top, \bm X_2^\top, \dots, \bm X_M^\top)^\top.
\end{align}
Here, $\bm y_m \in \mathbb{R}^{n_m}$, $\bm{X}_m \in \mathbb{R}^{n_m \times p}$, and $\sum_{m=1}^{M} n_{m} =n$. In order to adapt to this parallel structure, the existing parallel ADMM algorithms employ strategies similar to the regularized consensus problem described by \cite{Boyd2010Distributed}. The regularized consensus problem is a method in machine learning and optimization for solving consensus problems in a distributed environment. It aims to achieve cooperation among distributed machines by running local optimization algorithms with regularization terms to promote model consistency and consensus.

By introducing $\{\bm r_m = \bm y_m - \bm{X_m \beta_m}\}_{m=1}^{M}$ and the consensus constraints $\{\bm \beta = \bm \beta_m\}_{m=1}^{M}$, the constrained optimization problem of (\ref{hdlr}) can be expressed as, 
\begin{align}\label{PCNPQR}
\min_{\bm \beta, \{\bm r_m, \bm \beta_m\}_{m=1}^M} & \quad   \sum_{m=1}^{M} \mathcal{L}(\bm r_m) + {P}_\lambda(|\bm \beta|), \notag \\
\text{s.t.} \ \bm X_m \bm \beta_m + \bm r_m & = \bm y_m, \  \bm \beta = \bm \beta_m, \  m =1,2,\dots,M,
\end{align}
where $\bm r_m = (r_{m,1},r_{m,2},\dots,r_{m,n_m})^\top$ and $\mathcal{L}(\bm r_m) =  \sum_{l=1}^{n_m} \mathcal{L}( r_{m,l}) $. 
To solve the constrained optimization problem in (\ref{PCNPQR}), both the central and local machines must handle subproblems of the same dimension as \( \bm \beta \) (which is \( p \) dimensions) in each iteration. However, in the case of penalized regression, it is common for $\bm \beta$ to have a relatively large dimension. As $\bm r = (\bm r_1^\top, \bm r_2^\top, \dots, \bm r_m^\top)^\top$ and each local machine needs to solve its corresponding $\bm \beta_m$, solving parallel problems results in solving at least $Mp$ subproblems more than the nonparallel case.

Although this phenomenon can be mitigated through parallel computing structures, where $M$ local machines compute $Mp$ subproblems in an equivalent time to a single machine computing $p$ subproblems, the presence of these additional auxiliary variables $\{\bm \beta_m\}_{m=1}^{M}$ can negatively impact the convergence speed and accuracy of the algorithm. This drawback has also been emphasized by \cite{Lin2022Alternating}, which suggested avoiding excessive auxiliary variables when designing the ADMM algorithm. Moreover, it is important to note that $\{\bm \beta_m\}_{m=1}^{M}$ is not the desired estimation value we initially aim for, but rather auxiliary variables added to avoid the need for double loops and facilitate a parallel structure. More details can be found in \cite{Yu2017A}. Hence, a natural question arises: Is it possible to design an ADMM algorithm without introducing these auxiliary variables, while also maintaining a parallel structure and avoiding double loops?

This paper proposes a new parallel strategy based on the LADMM to address the above problem. Compared to existing related algorithms, the proposed algorithm offers three main advantages:
\begin{itemize}
\item  \textbf{Not an approximation algorithm}: Our algorithm directly tackles nonnonvex optimization problems without relying on methods like local linear approximation (LLA) in \cite{Zou2008One},  majorization minimization step in \cite{Yu2017A}, or iterative reweighting in  \cite{Pan2021Iteratively}. For smooth quantile losses, our algorithm  directly derives its proximal operator, rather than using first-order approximation methods  in \cite{Mkhadri2017A}.  Additionally, we prove that our algorithm converges globally to  a critical point of the  nonconvex optimization problem.

\item \textbf{Fewer iterative variables}: Compared to existing parallel ADMM algorithms, our parallel LADMM algorithm requires fewer iterative variables. Existing algorithms typically handle subproblems with dimensions of at least \( (2M+1)p+2n \), while our algorithm only requires \( p+2n \). This reduction in the number of variables is especially significant when \( p \) and/or \( M \) are large.

\item \textbf{Partition-Insensitive}: As stated by \cite{Fan2021Penalized}, when more local machines are employed, the parallel algorithms proposed by \cite{Yu2017A} and \cite{Fan2021Penalized} tend to select more non-zero variables. In contrast, our algorithm ensures that the solution remains largely consistent across different numbers of local machines and the number of samples they process. This consistency guarantees the reliability and accuracy of the solutions, regardless of the chosen parallelization strategy.
\end{itemize}

To ensure the proposed algorithm's wide applicability, we opt for a highly flexible smooth quantile loss, denoted as $\mathcal{L}$ in (\ref{hdlr}), and employ general nonconvex penalties such as Snet and Mnet for $P_\lambda(|\bm \beta|)$ in (\ref{hdlr}).
The remainder of this paper is organized as follows.  In Section \ref{sec2}, we introduce two smooth quantile loss functions and some nonconvex penalties, and derives the closed-form solutions of their proximal operators. In Section \ref{sec3}, we propose the LADMM algorithm for solving nonconvex penalized regression problems  and derive its parallel version. Besides, we  explained the equivalence relationship between parallel and nonparallel algorithms and  provide the convergence of the algorithms in this section. In Section \ref{sec4}, we present the performance of the proposed algorithm on some synthetic datasets, and compare it with existing methods. In Section \ref{sec5}, we utilize the new parallel algorithm to conduct  regressions on an online publicly available dataset, leading to the discovery of several new findings. In Section \ref{sec6}, we summarize the findings and concludes the paper with a discussion of future research directions. The technical proofs and supplementary numerical experimentsis are included in the Appendix. R package PIPADMM for implementing the partition-insensitive parallel LADMM is available at \url{ https://github.com/xfwu1016/PIPADMM}.


\section{Preliminary}\label{sec2}
In this section, we introduce two smooth quantile loss functions and several nonconvex penalties, and combine them to create the generalized nonconvex penalized regression models. To facilitate the description of the algorithm, we  provide closed-form solutions for the proximal operators of both the smooth quantile functions and nonconvex penalties.
\subsection{Smooth Quantile Losses}\label{sec21}
The quantile loss function, proposed by \cite{Koenker1978Regressions}, is defined as $$\rho_\tau (u) = u(\tau - 1_{u<0})$$ where $\tau \in (0,1)$ and $1_{u<0}$ is an indicator function, which is equal to 1 if $u < 0$, and 0 otherwise. This loss function has been widely studied in linear regression because it can go beyond the typical Gaussian assumptions and effectively resolve heavy-tailed random error distributions.  However,  the traditional quantile loss function may not be suitable for scenarios with noise and outliers, as it aims for precise fitting of a specific section of the data. Studies  conducted by \cite{Aravkin2014Sparse} and \cite{Mkhadri2017A}  have shown the limitations of the traditional quantile loss function and suggested using the smooth quantile loss as a better alternative. 

The two  popular smooth quantile losses proposed by \cite{Jennings1993An} and \cite{Aravkin2014Sparse}  are respectively defined as
\begin{align}\label{c}
\mathcal{L}_{\tau,c}(u) = \begin{cases} 
\tau (u - 0.5c) & \text{if } u \ge c, \\
\frac{\tau u^2}{2c} & \text{if}  \  u \in [0, c) , \\
\frac{\ (1- \tau) u^2}{2c} & \text{if}  \  u \in [-c, 0) , \\
(\tau -1)(u +0.5c)& \text{if}  \  u  <  -c,
\end{cases}
\end{align}
and
\begin{align}\label{ka}
\mathcal{L}_{\tau,\kappa}(u) = \begin{cases} 
\tau (u - \frac{\tau\kappa}{2} )& \text{if } u > \tau\kappa, \\
\frac{u^2}{2\kappa} & \text{if}  \  u \in [(\tau-1)\kappa, \tau\kappa] , \\
(\tau - 1) [u - \frac{(\tau-1)\kappa}{2}]& \text{if}  \  u  <  (\tau-1)\kappa,
\end{cases}
\end{align}
where  $\tau \in (0,1)$,   $c>0$ and $\kappa > 0$ are three  given constants. The smooth quantile loss is termed as a generalized loss because \( \mathcal{L}_{\tau,c}(u) \) and/or \( \mathcal{L}_{\tau,\kappa}(u) \) can closely resemble the quantile loss, least squares loss, asymmetric least squares loss, and Huber loss by adjusting \( \tau \), \( c \)  and \( \kappa \). Obviously, these two smooth quantile losses are the modified versions of the quantile loss function that incorporates smooth functions to replace the non-differentiable portion near the origin. This modification allows for differentiability at the origin, making it easier to analyze theoretically and design algorithms. One manifestation is that \( \mathcal{L}_{\tau,*}(u) \) (\( * \) represents \( c \) or \( \kappa \)) has a Lipschitz continuous first derivative, while quantile loss does not.

Because of these advantages and flexibility,  the smooth quantile losses are widely used in various fields, such as  nonparametric regression (\cite{Hee2011Fast}), functional data clustering (\cite{Kim2020Pseudo}), linear regression (\cite{Zheng2011Gradient}), and penalized linear regression (\cite{Ouhourane2022Group}).
Several algorithms have been proposed to handle smooth quantile regression (with or without penalty terms), including gradient descent (\cite{Zheng2011Gradient}), orthogonal matching pursuit (\cite{Aravkin2014Sparse}) and coordinate descent (\cite{Mkhadri2017A}). It is important to note that the aforementioned algorithms primarily concentrate on convex penalties. Although  \cite{Mkhadri2017A} extended the algorithm to nonconvex cases, its theoretical convergence is not guaranteed. To the best of our knowledge, no ADMM algorithm has been introduced specifically for solving regression models with smooth quantile regression.  The first use of parallel ADMM algorithm to solve regularized smooth quantile regression is also a contribution of this paper.

\subsection{Nonconvex Penalized Smooth Quantile Regression Models}\label{sec22}
Next, we introduce two generalized nonconvex penalties, namely Snet (SCAD + $\ell_2$) in \cite{Wang2010Variable}, and  Mnet (MCP +   $\ell_2$) in \cite{Huang2016THE}, and additionally propose a novel nonconvex penalty, called Cnet (\text{Capped}-$\ell_1$ + $\ell_2$). Capped-$\ell_1$ is a nonconvex regularization proposed by \cite{Zhang2010Analysis} and has garnered attention in \cite{Guan2018An} and \cite{Pan2021Iteratively}. It is defined as $\text{Cap-L1}_{a,\lambda_1}(\bm{\beta}) = \lambda_1 \sum_j \min\{|\beta_j|, a\}$. 
Snet, Mnet and Cnet are defined separately as,
\begin{align*}
\text{Snet}(\bm \beta) &= \text{SCAD}_{a,\lambda_1}(\bm \beta) + \frac{\lambda_2}{2}\|\bm \beta\|_2^2,\\
\text{Mnet}(\bm \beta) &= \text{MCP}_{a,\lambda_1}(\bm \beta) + \frac{\lambda_2}{2}\|\bm \beta\|_2^2, \\
\text{Cnet}(\bm \beta) &= \text{Cap-L1}_{a,\lambda_1}(\bm \beta)  +  \frac{\lambda_2}{2}\|\bm \beta\|_2^2,
\end{align*}
where $a$ is a constant  determined  by the first regularization term, and its suggested value can be found in papers that proposed these  regularization terms.

Combining the smooth quantile loss and the above generalized nonconvex penalty together produces a generalized nonconvex penalized regression model,
\begin{align}\label{NPQR}
\mathop {\arg \min }\limits_{\bm \beta \in \mathbb{R}^p} \quad \sum_{i=1}^{n} \mathcal{L}_{\tau,*} (y_i - \bm x_i^\top \bm \beta)  + P_\lambda(|\bm \beta|).
\end{align}
We abbreviate it as NPSQR.
NPSQR is a special case of regularized M-estimator, whose statistical properties have been studied in \cite{Negahban2010A}, \cite{Li2011Nonconcave}, \cite{Loh2015Regularized}, \cite{Loh2017STATISTICAL} and \cite{Zhou2018A}.

\subsection{Proximal Operator}\label{sec23}
In iterative algorithms, the existence of closed-form solutions for the proximal operator has a significant impact on the algorithm efficiency. 
To facilitate the closed-form solutions for the updates of $\bm \beta$ and $\bm r$ in the ADMM algorithm, we introduce the following proximal operators,
\begin{align}\label{po}
\operatorname{prox}_{\mu, g}(\bm v) = \arg \min_{\bm u} \left\{ g(\bm u) + \frac{\mu}{2}\|\bm u - \bm v\|_2^2\right \},
\end{align}
where $g(\bm u)$ is a  nonnegative real-valued function,  $\bm v$ is a  constant vector, and $\mu > 0$ is a constant. In this paper, the function $g(\bm u)$ is separable or additive, meaning $g(\bm u) = \sum_{j}g(u_j)$. Then, we can  divide the optimization problem (\ref{po}) into multiple  independent univariate problems. Here, we introduce the closed-form solutions for the proximal operators of the aforementioned  losses and penalties, which play a crucial role in our algorithm. The detailed derivation process of these closed-form solutions is provided in Appendix \ref{A}. We are not aware of any prior work that has derived closed-form solutions for these operators (except for some special cases with Snet in \cite{Wang2010Variable} and Mnet in \cite{Huang2016THE}), making our contribution novel in this regard. Furthermore, our derivation method is concise and easily extendable, enabling potential applications beyond the scope of this study.

\textbullet  \quad The closed-form solution for proximal operator of $\mathcal{L}_{\tau,c}(\bm u)$
\begin{align}\label{fc}
\operatorname{prox}_{\mu, \mathcal{L}_{\tau,c}}( v_j)  = \begin{cases}
v_j -  \frac{\tau}{\mu}, & \text{{if }}  v_j \geq c + \frac{\tau}{\mu}, \\
\frac{c\mu v_j }{c\mu + \tau}, & \text{{if }} 0 \le  v_j < c + \frac{\tau}{\mu}, \\
\frac{c\mu v_j }{c\mu + 1 - \tau}, & \text{{if }}  - c + \frac{\tau - 1}{\mu} \le  v_j <  0, \\
 v_j -  \frac{\tau -1}{\mu}, & \text{{if }}  v_j <  - c + \frac{\tau - 1}{\mu}. \\
\end{cases}
\end{align}

\textbullet  \quad The  closed-form solution for proximal operator of $\mathcal{L}_{\tau,\kappa}(\bm u)$
\begin{align}\label{fka}
\operatorname{prox}_{\mu, \mathcal{L}_{\tau,\kappa}}( v_j)  = \begin{cases}
v_j -  \frac{\tau}{\mu}, & \text{{if }}  v_j \geq \tau \kappa + \frac{\tau}{\mu}, \\
\frac{\kappa\mu v_j }{\kappa\mu + 1}, & \text{{if }}  (\tau -1) \kappa + \frac{\tau - 1}{\mu} \le  v_j < \tau \kappa + \frac{\tau}{\mu}, \\
 v_j -  \frac{\tau -1}{\mu}, & \text{{if }}  v_j <   (\tau -1) \kappa + \frac{\tau - 1}{\mu}. \\
\end{cases}
\end{align}
Obviously, when $c=0$ and $\kappa = 0$, the proximal operator of quantile loss in \cite{Yu2017A} and \cite{Gu2018ADMM} is a special case in (\ref{fc}) or (\ref{fka}).

\textbullet \quad  The closed-form solution for proximal operator of Snet  with $(a-1)(\eta + \lambda_2) > 1$:
\begin{align}\label{snet}
\operatorname{prox}_{\eta, P_\lambda}( v_j)  = \begin{cases}
\text{{sign}}( v_j) \cdot \left[ \frac{\eta| v_j| - \lambda_1}{\eta + \lambda_2} \right]_+, & \text{{if }} | v_j| \leq \frac{\lambda_1(1+\eta+\lambda_2)}{\eta}, \\
\text{{sign}}( v_j) \cdot \left[ \frac{(a-1)\eta| v_j| - a \lambda_1}{(a-1) (\eta + \lambda_2) - 1} \right]_+, & \text{{if }} \frac{\lambda_1(1+\eta+\lambda_2)}{\eta} < | v_j| < \frac{a \lambda_1(\eta+\lambda_2)}{\eta}, \\
\frac{\eta  v_j}{\eta + \lambda_2}, & \text{{if }} | v_j| \geq  \frac{a \lambda_1(\eta+\lambda_2)}{\eta}. \\
\end{cases}
\end{align}
Obviously, when $\lambda_2=0$ and $\eta=1$, the closed-form solution for  proximal operator of SCAD in \cite{Fan2001Variable}  is a special case  in (\ref{snet}). Besides, when $\eta = 1$, the solution in (\ref{snet}) is the same as the closed-form solution mentioned in \cite{Wang2010Variable}. Note that $(a-1)(\eta + \lambda_2) > 1$  is also valid in this paper, as $a=3.7$ and $\eta >> 1$. 

\textbullet \quad  The closed-form solution for proximal operator of Mnet:  
\begin{align}\label{mnet}
\operatorname{prox}_{\eta, P_\lambda}( v_j)  = \begin{cases}
\text{{sign}}( v_j) \cdot \left[ \frac{a \eta| v_j| - a \lambda_1}{a(\eta + \lambda_2) -1} \right]_+, & \text{{if }} | v_j| <  \frac{a \lambda_1(\eta+\lambda_2)}{\eta}, \\
\frac{\eta  v_j}{\eta + \lambda_2}, & \text{{if }} | v_j| \geq  \frac{a \lambda_1(\eta+\lambda_2)}{\eta}. \\
\end{cases}
\end{align}
Obviously, when $\lambda_2=0$ and $\eta=1$, the proximal operator of MCP in \cite{Zhang2010Nearly}  is a special case  in (\ref{mnet}). In addition, when $\eta = 1$, the solution in (\ref{mnet}) is the same as the solution mentioned in \cite{Huang2016THE}.

\textbullet \quad  The closed-form solution for proximal operator of Cnet: 
\begin{align}
\operatorname{prox}_{\eta, P_\lambda}( v_j)  = \begin{cases}
\text{{sign}}( v_j) \cdot \left[ \frac{ \eta| v_j| -  \lambda_1}{\eta + \lambda_2} \right]_+, & \text{{if }} | v_j| <  \frac{a (\eta+\lambda_2)}{\eta}, \\
\frac{\eta  v_j}{\eta + \lambda_2}, & \text{{if }} | v_j| \geq  \frac{a (\eta+\lambda_2)}{\eta} .\\
\end{cases}
\end{align}

In addition to these two smooth quantile losses and three nonconvex combined penalties, our PIPADMM package also offers closed-form solutions  for proximal operators of  least squares loss and its asymmetric variant,  quantile regression loss,  Huber loss, and elastic-net regularization (\cite{Zou2005Regularization}). Naturally, our algorithm (R package) can also be applied to solve these regression problems with elastic-net. Some of these proximal operators  have already obtained the closed-form solutions from previous  research in \cite{Liang2024Linearized} and its cited literature, and the closed-form solutions of all the proximal operators mentioned above can be derived using the similar methods as presented in this paper. As a result, we will not delve into detailed introductions for them.

\section{The Parallel LADMM Algorithm}\label{sec3}
The alternating direction method of multipliers (ADMM) is an algorithm employed to solve problems characterized by a two-block separable objective function with equality constraints. By setting $\bm r = \bm y - \bm X \bm \beta$, the given NPSQR problems in (\ref{NPQR}) can be transformed into the following form,
\begin{align}\label{CNPSQR}
\min_{\bm \beta, \bm r} & \quad \mathcal{L}_{\tau,*}(\bm r)  + {P}_\lambda(|\bm \beta|), \notag \\
\text{s.t.} & \quad \bm X \bm \beta + \bm r = \bm y,
\end{align}
where $\mathcal{L}_{\tau,*}(\bm r) =   \sum_{i=1}^{n} \mathcal{L}_{\tau,*}(r_i)$.
To solve NPSQR using ADMM, the augmented Lagrangian of  (\ref{CNPSQR}) needs to be formulated as,
\begin{equation}\label{AL}
{L_\mu }(\bm \beta, \bm r, \bm d) = \mathcal{L}_{\tau,*}(\bm r) +  {P}_\lambda(|\bm \beta|) - {\bm d^\top}(\bm{X\beta} + \bm{r} - \bm y) + \frac{\mu }{2}\left\|\bm{X\beta} + \bm{r} - \bm y \right\|_2^2.
\end{equation}
Here, $\bm d \in \mathbb R^n$ represents the dual variable, and $\mu > 0$ is a tunable augmentation parameter.
The ADMM algorithm alternates between the following iterative steps with given $\bm r^0$ and $\bm d^0$,
\begin{equation}\label{twoupadmm}
\left\{ \begin{array}{l}
\bm \beta^{k+1} \ =\mathop {\arg \min }\limits_{\bm \beta} \left\{  {P}_\lambda(|\bm \beta|) + \frac{\mu }{2}\left\|\bm{X\beta} + \bm{r}^k - \bm y - \bm d^k/\mu \right\|_2^2     \right\},\\
\bm r^{k+1} \ =\mathop {\arg \min }\limits_{\bm r} \left\{ \mathcal{L}_{\tau,*}(\bm r) +  \frac{\mu }{2}\left\|\bm{X}\bm{\beta}^{k+1} + \bm{r} - \bm y - \bm d^k/\mu \right\|_2^2 \right\},\\
\bm{d}^{k+1}=\bm{d}^{k}-\mu(\bm{X}\bm{\beta}^{k+1} + \bm{r}^{k+1} - \bm y).
\end{array} \right.
\end{equation}
Similar iterative  schemes have been used by \cite{Yu2017A} and \cite{Gu2018ADMM} to solve the convex penalized quantile regression. However, the  $\bm \beta$-update in (\ref{twoupadmm}) is a regression problem similar to penalized least squares regression, which does not have a closed-form solution. This update is computationally expensive because it necessitates numerical optimization techniques like the coordinate descent algorithm, leading to a double-loop algorithm. Next, we will introduce the LADMM algorithm to address this challenge.
\subsection{LADMM Algorithm}\label{sec31}
Linearized ADMM (LADMM)  algorithms have found wide application in convex sparse statistical learning models, including the Dantzig selector in \cite{Wang2012The}, sparse group and fused Lasso methods in \cite{Li2014Linearized}, sparse quantile regression  in \cite{Gu2018ADMM}, and sparse support vector machines  in \cite{Liang2024Linearized}.
The fundamental idea of the LADMM algorithm involves linearizing the quadratic term using a nontrivial matrix, thereby creating a proximal operator. We solve (\ref{CNPSQR}) using LADMM through the following iteration,
\begin{equation}\label{twoupladmm1}
\left\{ \begin{array}{l}
\bm \beta^{k+1} \ =\mathop {\arg \min }\limits_{\bm \beta} \left\{  {P}_\lambda(|\bm \beta|) + \frac{\mu }{2}\left\|\bm{X\beta} + \bm{r}^k - \bm y - \bm d^k/\mu \right\|_2^2 +   \frac{1}{2}\left \| \bm \beta - \bm \beta^k \right \|_{\bm S}^2    \right\},\\
\bm r^{k+1} \ =\mathop {\arg \min }\limits_{\bm r} \left\{ \mathcal{L}_{\tau,*}(\bm r) +  \frac{\mu }{2}\left\|\bm{X}\bm{\beta}^{k+1} + \bm{r} - \bm y - \bm d^k/\mu \right\|_2^2 \right\},\\
\bm{d}^{k+1}=\bm{d}^{k}-\mu(\bm{X}\bm{\beta}^{k+1} + \bm{r}^{k+1} - \bm y),
\end{array} \right.
\end{equation}
where  $\bm S = \eta  \bm I_p - \mu \bm X^\top \bm X$ and $\left \| \bm \beta - \bm \beta^k \right \|_{\bm S}^2 =   (\bm \beta - \bm \beta^k)^\top \bm S (\bm \beta - \bm \beta^k)$. To ensure the convergence of the algorithm, we need $\bm S$ to be a positive-definite matrix, i.e., $\eta > \text{eigen}(\mu \bm X^\top \bm X)$, where $\text{eigen}(\mu \bm X^\top \bm X)$ denotes the maximum eigenvalue of  $\mu \bm X^\top \bm X$.
\begin{algorithm}\small
\caption{\small{LADMM for solving NPSQR.}}
\label{alg1}
\begin{algorithmic}
\STATE {\textbf{Pre-computation}:  $\eta =\text{eigen}(\mu \bm X^\top \bm X )$.} 
\STATE {\textbf{Input:} Observation data: $\boldsymbol{X},\boldsymbol{y}$;  initial primal variables: ${\boldsymbol{\beta}}^0,\boldsymbol{r}^0$; initial dual variables $\boldsymbol{d}^0$; augmented parameters: $\mu$; penalty parameter: $\lambda_1$, $\lambda_2$;  selected parameters: $\tau \in (0,1)$,  $c>0$ or
$\kappa>0$.}
\STATE {\textbf{Output:}  The last iteration solution $\boldsymbol{\beta}^K$. }
\STATE {\textbf{while} not converged \textbf{do}}
\STATE {\quad 1. Update $\bm \beta^{k+1}_j  =  \operatorname{prox}_{\eta, P_\lambda} \left( \bm \beta_j^k - \frac{ \left[ \mu \bm X^\top(\bm{X\beta}^k + \bm{r}^k - \bm y - \bm d^k/\mu)\right]_j }{\eta}\right), j=1,2,\dots,p,$}
\STATE {\quad 2. Update $\bm r_i^{k+1}  =  \operatorname{prox}_{\mu, \mathcal{L}_{\tau,*}}\left([\bm y + \bm d^k/\mu - \bm{X}\bm{\beta}^{k+1}]_i  \right), i=1,2,\dots,n,$}
\STATE {\quad 3. Update $\bm{d}^{k+1}=\bm{d}^{k}-\mu(\bm{X}\bm{\beta}^{k+1} + \bm{r}^{k+1} - \bm y).$}
\STATE {\textbf{end while}}
\STATE {\textbf{return} solution}
\end{algorithmic}
\label{alg1}
\end{algorithm}

For the $\bm \beta$-update,  omitting constant terms unrelated to $\bm \beta$,  the subproblem  becomes
\begin{align}\label{npbeta}
\bm \beta^{k+1} \ =\mathop {\arg \min }\limits_{\bm \beta} \left\{ {P}_\lambda(|\bm \beta|) + \frac{\eta }{2}\left\| \bm \beta - \bm \beta^k + \frac{\mu \bm X^\top(\bm{X\beta}^k + \bm{r}^k - \bm y - \bm d^k/\mu)}{\eta} \right\|_2^2  \right\}.
\end{align}
Obviously, both $\bm \beta^{k+1}$ and $\bm r^{k+1}$ can be represented as proximal operators, that is,
\begin{align}\label{uadmm}
\left\{
    \begin{aligned}
        &\bm \beta^{k+1}  =  \operatorname{prox}_{\eta, P_\lambda} \left( \bm \beta^k - \frac{\mu \bm X^\top(\bm{X\beta}^k + \bm{r}^k - \bm y - \bm d^k/\mu)}{\eta}\right), \\
        &\bm r^{k+1}  =  \operatorname{prox}_{\mu, \mathcal{L}_{\tau,*}}\left(\bm y + \bm d^k/\mu - \bm{X}\bm{\beta}^{k+1}  \right).
    \end{aligned}
\right.
\end{align}
To summarize, the iterative scheme of LADMM for (\ref{CNPSQR}) is described in Algorithm \ref{alg1} .

\subsection{Parallel LADMM Algorithm}\label{sec32}
This subsection delves into the implementation of parallel LADMM algorithms designed for handling large-scale data. The main objective is to address the issue in a distributed manner, with each processor tasked with handling a subset of the training data. This distributed approach offers significant advantages when facing with a high volume of training examples that cannot be practically processed on a single machine, or when the data is inherently distributed or stored in a similar manner.
\begin{figure}[htbp]
\centering
\caption{The parallel LADMM algorithm}\label{fig1}
\includegraphics[width=15cm,height=9cm]{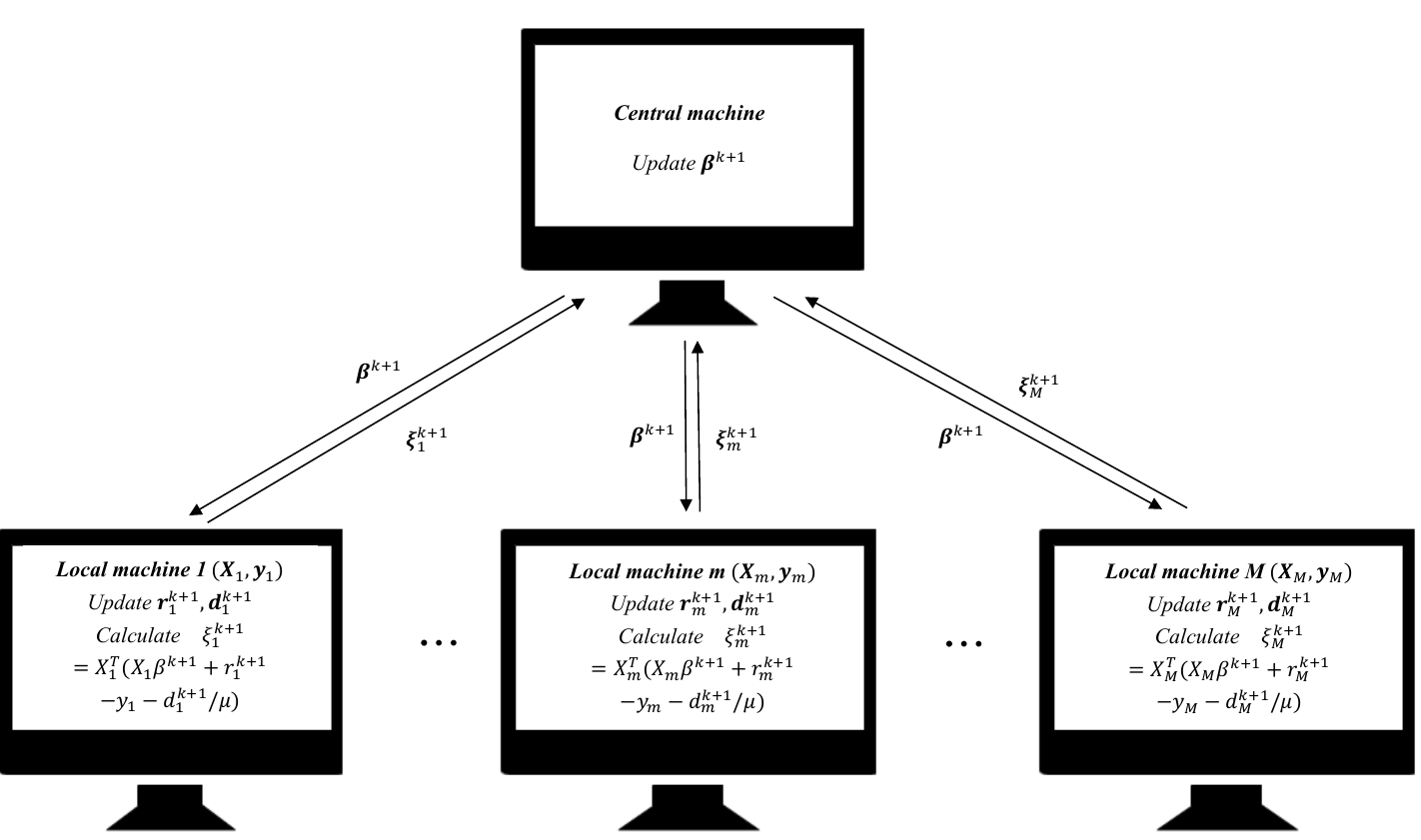}
\end{figure}

We divide $\bm X$ and $\bm y$ into $M$ row blocks using the following approach:
\begin{align}\label{Xy}
\bm y = (\bm y_1^\top, \bm y_2^\top, \dots, \bm y_M^\top)^\top, \quad \bm X =(\bm X_1^\top, \bm X_2^\top, \dots, \bm X_M^\top)^\top,
\end{align}
where $\bm{X}_m \in \mathbb{R}^{n_m \times p}$ and $\bm{y}_m \in \mathbb{R}^{n_m}$, with $\sum_{m=1}^{M} n_{m} = n$. Here, $\bm X_m$ and $\bm y_m$ denote the $m$-th block of data, which will be processed by the $m$-th local machine. Correspondingly, $\bm r$ and $\bm d$ can also be divided into $M$ blocks:
\begin{align}\label{rb}
\bm r = (\bm r_1^\top, \bm r_2^\top, \dots, \bm r_M^\top)^\top, \quad \bm d =(\bm d_1^\top, \bm d_2^\top, \dots, \bm d_M^\top)^\top.
\end{align}
Then, we can rewrite problem (\ref{CNPSQR}) into the following equivalent problem:
\begin{align}\label{MCNPSQR}
\min_{\bm \beta,\{ \bm r_m \}_{m=1}^{M}}& \quad  \sum_{m=1}^{M} \sum_{l=1}^{n_m} \mathcal{L}_{\tau,*}( r_{m,l}) + {P}_\lambda(|\bm \beta|), \notag \\
\text{s.t.} & \quad \bm X_m \bm \beta + \bm r_m = \bm y_m, \quad m=1,2,\dots,M,
\end{align}
where $\bm r_m = (r_{m,1},r_{m,2},\dots,r_{m,n_m})^\top$.
Similar to (\ref{AL}), the augmented Lagrangian for problem (\ref{MCNPSQR}) is given by
\begin{align}\label{AL2}\small
{L_\mu }(\bm \beta, \{\bm r_m\}_{m=1}^{M}, \{\bm d_m\}_{m=1}^{M}) & = \sum_{m=1}^{M} \sum_{l=1}^{n_m} \mathcal{L}_{\tau,*}( r_{m,l}) +  {P}_\lambda(|\bm \beta|) - \sum_{m=1}^{M}{\bm d_m^\top}(\bm{X_m\beta} + \bm{r}_m - \bm y_m)  \notag \\
& + \frac{\mu }{2}\sum_{m=1}^{M} \left\|\bm{X_m\beta} + \bm{r}_m - \bm y_m \right\|_2^2,
\end{align} 
where $\bm d_m \in \mathbb R^{n_m}$ is the dual variable. Then,  we can solve (\ref{MCNPSQR}) using parallel LADMM through the following iteration,
\begin{equation}\label{twoupladmm2}
\left\{ \begin{array}{l}
\bm \beta^{k+1} \ =\mathop {\arg \min }\limits_{\bm \beta} \left\{  {P}_\lambda(|\bm \beta|) + \frac{\mu }{2}  \sum_{m=1}^{M} \left\|\bm{X}_m\bm{\beta} + \bm{r}_m^k - \bm y_m - \bm d_m^k/\mu \right\|_2^2 +   \frac{1}{2}\left \| \bm \beta - \bm \beta^k \right \|_{\bm S}^2    \right\},\\
\bm r_m^{k+1} \ =\mathop {\arg \min }\limits_{\bm r_m} \left\{ \sum_{l=1}^{n_m} \mathcal{L}_{\tau,*}( r_{m,l})+  \frac{\mu }{2}\left\|\bm{X}_m\bm{\beta}^{k+1} + \bm{r}_m - \bm y_m - \bm d_m^k/\mu \right\|_2^2 \right\}, m=1,2,\dots,M,\\
\bm{d}_m^{k+1}=\bm{d}_m^{k}-\mu(\bm{X}_m\bm{\beta}^{k+1} + \bm{r}_m^{k+1} - \bm y_m), m=1,2,\dots,M,
\end{array} \right.
\end{equation}
where  $\bm S = \eta  \bm I_p - \mu  \sum_{m=1}^{M}  \bm X_m^\top \bm X_m = \eta  \bm I_p - \mu    \bm X^\top \bm X$. The steps that need to execute parallel processes in iteration are $\bm r_m$ and $\bm d_m$.   However, the central machine updating $\bm \beta$  encounters two issues due to not loading data. The first issue is that the  update of $\bm \beta$ requires loading the collected data $\bm X, \bm y$. The second one is that the calculation of $\eta$ also depends on the collected data.

The above two issues seem to make this  parallel   LADMM algorithm impossible to implement. Fortunately, continuing to derive the specific iteration steps for $\bm \beta$, we found that both of these issues can be solved. Similar to (\ref{uadmm}),  the iteration of $\bm \beta$ and $\bm r_m$ can be represented as 
\begin{align}\label{Mupdate}
\left\{
    \begin{aligned}
        &\bm \beta^{k+1}  =  \operatorname{prox}_{\eta, P_\lambda} \left( \bm \beta^k - \frac{\mu \sum_{m=1}^{M} \bm X_m^\top(\bm{X}_m \bm \beta^k + \bm{r}_m^k - \bm y_m - \bm d_m^k/\mu)}{\eta}\right), \\
        &\bm r_m^{k+1}  =  \operatorname{prox}_{\mu, \mathcal{L}_{\tau,*}}\left(\bm y_m + \bm d_m^k/\mu - \bm{X}_m\bm{\beta}^{k+1}  \right), m=1,2,\dots,M.
    \end{aligned}
\right.
\end{align}
Let   $\bm{\xi}_m^{k}=\bm X_m^\top(\bm{X}_m \bm \beta^{k} + \bm{r}_m^{k} - \bm y_m - \bm d_m^{k}/\mu)$, $m=1,2,\dots,M$. From the iteration sequence, it can be seen that in the $k+1$ iteration, $\bm \xi_m^{k}$ can be generated by each local machine and transmitted to the central machine.  Then,  
$$\bm \beta^{k+1}  =  \operatorname{prox}_{\eta, P_\lambda} \left(\bm \beta^k - \frac{\mu }{\eta} \sum_{m=1}^{M} \bm \xi_m^{k} \right).$$  Therefore, the first issue caused by the central machine not loading data has been resolved. 

The second issue is the calculation of $\eta$. Each local machine only loads a portion of the data, which means we can only obtain $\eta_m = \text{eigen}(\mu \bm X_m^\top \bm X_m)$ on each local machine.  Note that $\sum_{m=1}^{M}\eta_m \ge \text{eigen} (\mu \bm X^\top \bm X)$. Then, in the specific implementation of the parallel algorithm, we can choose $\sum_{m=1}^{M}\eta_m$ as the value for $\eta$. In practical operation, we only need to calculate $\eta_m$ once at the beginning of the iteration and then pass it on to the central machine. Therefore, both issues related to the implementation of parallel LADMM have been resolved. 

To sum up, the iterative scheme of parallel LADMM for (\ref{CNPSQR}) can be described in Algorithm \ref{alg2} and  visualized Figure \ref{fig1}.
\begin{algorithm}\small
\caption{\small{Parallel LADMM for solving NPSQR}}
\label{alg2}
\begin{algorithmic}
\STATE {\textbf{Pre-computation}:  $\eta_m =\text{eigen}(\mu \bm X_m^\top \bm X_m )$, $m= 1,2,\dots,M$.} 
\STATE {\textbf{Input:} $\bullet$ Central machine: augmented parameter $\mu$; tuning parameters $\lambda_1$ \text{and} $\lambda_2$;  $\eta = \sum_{m=1}^{M}\eta_m$; and initial iteration variable $\bm \beta^0$.\\
\qquad \  \ \ \ \ $\bullet$ The $m$-th local machine:  observation data $\{\boldsymbol{X}_m,\boldsymbol{y}_m\}_{m=1}^{M}$; augmented parameter $\mu$; selected parameters $\tau \in (0,1)$, and $c>0$ or $\kappa>0$; and initial iteration variables $\bm \beta^0$,  $\bm r_m^0$, $\bm d_m^0$ and $\bm \xi_m^0 =\bm X_m^\top(\bm{X}_m \bm \beta^{0} + \bm{r}_m^{0} - \bm y_m - \bm d_m^{0}/\mu)$.}
\STATE {\textbf{Output:}  The last iteration solution $\boldsymbol{\beta}^K$. }
\STATE {\textbf{while} not converged \textbf{do}}
\STATE {\ \textbf{Central machine}: 1. Receive $\eta_m$ (only once) and $\bm \xi_m^k$ transmitted by $M$ local machines,\\ \qquad \qquad \qquad  \qquad \quad \ 2.  Update $\bm \beta^{k+1}_j  =  \operatorname{prox}_{\eta, P_\lambda} \left( \bm \beta_j^k - \frac{\left( \mu \sum_{m=1}^{M} \bm \xi_m^k \right)_j}{\eta}\right), j=1,2,\dots,p,$ \\
\qquad \qquad \qquad  \qquad \quad \ 3. Send $\bm \beta^{k+1}$ to the local machines.
}
\STATE {\ \textbf{Local machines}: \ \  for $m =1 ,2, \dots, M$ (in parallel) \\
\qquad \qquad \qquad  \qquad \quad \ 1. Receive $\bm \beta^{k+1}$  transmitted by the central machine, \\
\qquad \qquad \qquad  \qquad \quad \ 2.  Update $\bm r_{m,l}^{k+1}  =  \operatorname{prox}_{\mu, \mathcal{L}_{\tau,*}}\left([\bm y_m + \bm d_m^k/\mu - \bm{X}_m\bm{\beta}^{k+1}]_l  \right), m=1,2,\dots,M$,  \\
\qquad \qquad \qquad  \qquad \quad \quad \ \ $l=1,2,\dots,n_m$, \\
\qquad \qquad \qquad  \qquad \quad \ 3. Update $\bm{d}_m^{k+1}=\bm{d}_m^{k}-\mu(\bm{X}_m\bm{\beta}^{k+1} + \bm{r}_m^{k+1} - \bm y_m), m=1,2,\dots,M,$ \\
\qquad \qquad \qquad  \qquad \quad \
4. Calculate $\bm{\xi}_m^{k+1}=\bm X_m^\top(\bm{X}_m \bm \beta^{k+1} + \bm{r}_m^{k+1} - \bm y_m - \bm d_m^{k+1}/\mu), m=1,2,\dots,M,$ \\
\qquad \qquad \qquad  \qquad \quad \ 5. Send $\bm \xi^{k+1}_m$  to the central machine.
}
\STATE {\textbf{end while}}
\STATE {\textbf{return} solution}.
\end{algorithmic}
\end{algorithm}

Next, we will theoretically describe the relationship between Algorithm \ref{alg1} and Algorithm  \ref{alg2}. In fact, when $M=1$, the two algorithms are the same, so the following discussion focuses on the case of $M \ge 2$ in Algorithm \ref{alg2}. For ease of distinction,  let us denote  $\left\{ \hat{\bm \beta}^k, \hat{\bm r}^k, \hat{\bm d}^k\right\}$ as the $k$-th iteration results of Algorithm  \ref{alg1}, and  $\left\{ \tilde{\bm \beta}^k, \tilde{\bm r}^k , \tilde{\bm d}^k \right\}$ as the $k$-th iteration results of Algorithm  \ref{alg2}.
Then, we can draw the following  surprising conclusion.
\begin{thm}\label{TH1}
If we use the same initial  iteration variables ($\{ \hat{\bm \beta}^0, \hat{\bm r}^0, \hat{\bm d}^0 \} = \{ \tilde{\bm \beta}^0, \tilde{\bm r}^0 , \tilde{\bm d}^0 \}$) and $\eta$,  the iterative  solutions  obtained  by these two algorithms are actually the same, i.e.,
\begin{align}
\left\{ \hat{\bm \beta}^k, \hat{\bm r}^k, \hat{\bm d}^k\right\} = \left\{ \tilde{\bm \beta}^k, \tilde{\bm r}^k , \tilde{\bm d}^k \right\}, \ \text{for} \ \text{all} \  {k}.
\end{align}
\end{thm}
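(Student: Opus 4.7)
The plan is to prove the equality by induction on the iteration index $k$. The base case $k = 0$ holds immediately since both algorithms are initialized with the same variables, and under the identification $\hat{\bm r}^0 = ((\tilde{\bm r}_1^0)^\top, \dots, (\tilde{\bm r}_M^0)^\top)^\top$ and $\hat{\bm d}^0 = ((\tilde{\bm d}_1^0)^\top, \dots, (\tilde{\bm d}_M^0)^\top)^\top$ arising from the partition in (\ref{Xy})--(\ref{rb}). Assuming the inductive hypothesis at step $k$, I would separately verify that each of the three updates $\bm \beta^{k+1}$, $\bm r^{k+1}$, $\bm d^{k+1}$ produced by the two algorithms agree.

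The key algebraic identity driving the $\bm \beta$-step is the block decomposition
\[
\bm X^\top(\bm X \bm \beta + \bm r - \bm y - \bm d/\mu) \;=\; \sum_{m=1}^M \bm X_m^\top\bigl(\bm X_m \bm \beta + \bm r_m - \bm y_m - \bm d_m/\mu\bigr) \;=\; \sum_{m=1}^{M} \bm \xi_m.
\]
Applied with $\bm \beta = \hat{\bm \beta}^k = \tilde{\bm \beta}^k$ (and similarly for $\bm r$, $\bm d$) by the inductive hypothesis, the argument inside $\operatorname{prox}_{\eta, P_\lambda}(\cdot)$ is identical in Algorithms \ref{alg1} and \ref{alg2}; since both algorithms use the same $\eta$, we conclude $\hat{\bm \beta}^{k+1} = \tilde{\bm \beta}^{k+1}$. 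For the $\bm r$-step, I would invoke the separability of $\mathcal{L}_{\tau,*}(\bm r) = \sum_{i=1}^n \mathcal{L}_{\tau,*}(r_i)$ so that the proximal map decouples across coordinates; consequently restricting to the indices belonging to block $m$ gives exactly the parallel update $\tilde{\bm r}_m^{k+1}$, and these blocks stack back into $\hat{\bm r}^{k+1}$. The dual update then matches blockwise by construction since it is an affine componentwise formula.

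The proof is essentially a bookkeeping argument, so I do not expect a genuine obstacle. The one point worth stating carefully is that the equality $\eta = \sum_{m=1}^M \eta_m$ used in Algorithm \ref{alg2} is only required to be \emph{the same} $\eta$ as in Algorithm \ref{alg1}; the specific formula $\eta = \sum_m \eta_m \ge \operatorname{eigen}(\mu \bm X^\top \bm X)$ guarantees positive-definiteness of $\bm S$ (so convergence of Algorithm \ref{alg2} follows from that of Algorithm \ref{alg1}), but does not affect the equivalence itself. With the three inductive verifications combined, the conclusion $\{\hat{\bm \beta}^k, \hat{\bm r}^k, \hat{\bm d}^k\} = \{\tilde{\bm \beta}^k, \tilde{\bm r}^k, \tilde{\bm d}^k\}$ follows for all $k$, which is exactly the partition-insensitivity claim.
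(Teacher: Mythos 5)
Your proposal is correct and follows essentially the same route as the paper's own proof in Appendix B.1: an induction on $k$ driven by the block decomposition $\bm X^\top(\bm X\bm\beta^k + \bm r^k - \bm y - \bm d^k/\mu) = \sum_{m=1}^M \bm X_m^\top(\bm X_m\bm\beta^k + \bm r_m^k - \bm y_m - \bm d_m^k/\mu)$, with the $\bm r$- and $\bm d$-updates then matching blockwise. Your explicit remarks on the separability of $\mathcal{L}_{\tau,*}$ and on the role of $\eta$ merely spell out details the paper leaves implicit.
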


This theorem states that as long as Algorithm \ref{alg1} and Algorithm \ref{alg2} ($M \ge 2$) adopt the same  initial value and $\eta$, the iterative solution remains the same regardless of how the samples are divided.
In other words, as long as $\eta$ is the same, changes in $M$ and the number of samples loaded by each machine will not affect the iterative solution of parallel LADMM. While this conclusion may not seem immediately intuitive, the proof for it is remarkably simple and is given in Appendix \ref{B1}. 

However, as discussed above in solving the calculation problem of $\eta$ in parallel algorithms, as the number of local machines increases, the value of $\eta$ will also increase.
There have been some studies on the impact of the  $\eta$ size in the linearized ADMM
algorithm. Their conclusion was that a large $\eta$ value causes the algorithm to converge slower, and there may be no significant changes in the convergence solution of the iteration.  Interested readers can refer to the research conducted by \cite{He2020Optimally} and its references. Next, we will discuss the convergence of the algorithm.

Let us assume that $\bm X^\top \bm X > \underline{\mu} \bm I_p $, where $\underline{\mu}>0$ is a constant. It is worth noting that this assumption is, in fact, the lower restricted eigenvalue condition that is required in many penalized linear models as mentioned in \cite{Wang2020A} and the references cited therein. We now demonstrate the convergence of the LADMM algorithm, and the proof is given in Appendix \ref{B2}.
\begin{thm}\label{TH2}
Let the sequence $\boldsymbol{w}^{k}=\{\boldsymbol{\beta}^k, \boldsymbol{r}^k, \boldsymbol{d}^k\}$ be generated by Algorithms \ref{alg1} or  \ref{alg2}   with an arbitrary initial feasible solution $\boldsymbol{w}^{0}=\{\boldsymbol{\beta}^0, \boldsymbol{r}^0, \boldsymbol{d}^0\}$.  Then, with the condtions $\mu >  \frac{\sqrt{2n}}{\min\{c,\kappa \}}$ and $\eta > \text{eigen}(\mu {{\boldsymbol{X}}}^\top {\boldsymbol{X}})$,  the sequence $\boldsymbol{w}^{k}$ converges to $\boldsymbol{w}^{*}=\{ \bm{\beta}^*, \boldsymbol{r}^*, \boldsymbol{d}^*\}$, where  $\boldsymbol{w}^{*}$ is a critical point of $L_\mu$. 
\end{thm}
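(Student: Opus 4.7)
The plan is to follow the standard four-ingredient recipe for nonconvex (linearized) ADMM convergence: (i) a sufficient-decrease inequality for the augmented Lagrangian $L_\mu$ along iterates, (ii) boundedness of $\{w^k\}$, (iii) a subgradient bound of the form $\text{dist}(0,\partial L_\mu(w^{k+1})) \leq C\|w^{k+1}-w^k\|$, and (iv) the Kurdyka--{\L}ojasiewicz (KL) property to upgrade subsequential convergence to global convergence. By Theorem \ref{TH1}, it suffices to argue for Algorithm \ref{alg1}. The starting observation is that the $r$-update's optimality condition yields $\nabla \mathcal{L}_{\tau,*}(r^{k+1}) = d^{k+1}$, so that the dual increment can be bounded by a primal increment: reading off the piecewise definitions in (\ref{c}) and (\ref{ka}), $\nabla \mathcal{L}_{\tau,*}$ is globally Lipschitz with constant $L \leq 1/\min\{c,\kappa\}$, whence $\|d^{k+1}-d^k\|^2 \leq L^2\|r^{k+1}-r^k\|^2$.

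For the sufficient decrease, I would split $L_\mu(w^{k+1})-L_\mu(w^k)$ across the three subproblem updates. The linearized $\beta$-step contributes at most $-\tfrac{1}{2}\|\beta^{k+1}-\beta^k\|_{\bm S}^2$ (exploiting $\bm S = \eta I_p - \mu X^\top X \succ 0$ from the hypothesis $\eta > \text{eigen}(\mu X^\top X)$); the $r$-step contributes at most $-\tfrac{\mu}{2}\|r^{k+1}-r^k\|^2$; and the $d$-step contributes $+\tfrac{1}{\mu}\|d^{k+1}-d^k\|^2$. Absorbing the dual term via the Lipschitz estimate produces
\begin{equation*}
L_\mu(w^{k+1})-L_\mu(w^k) \leq -\tfrac{1}{2}\|\beta^{k+1}-\beta^k\|_{\bm S}^2 - \left(\tfrac{\mu}{2} - \tfrac{n L^2}{\mu}\right)\|r^{k+1}-r^k\|^2,
\end{equation*}
where the $n$ arises from the coordinate-wise aggregation when bounding $\|\nabla \mathcal{L}_{\tau,*}(r^{k+1})-\nabla \mathcal{L}_{\tau,*}(r^k)\|$ via the scalar Lipschitz constant; the hypothesis $\mu > \sqrt{2n}/\min\{c,\kappa\}$ makes the second coefficient strictly positive. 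A lower bound on $L_\mu$ comes from plugging $d^{k+1}=\nabla \mathcal{L}_{\tau,*}(r^{k+1})$ into $L_\mu$ and applying the Lipschitz descent lemma, giving $L_\mu(w^{k+1}) \geq \mathcal{L}_{\tau,*}(y-X\beta^{k+1}) + P_\lambda(|\beta^{k+1}|) + \tfrac{\mu-L}{2}\|X\beta^{k+1}+r^{k+1}-y\|^2 \geq 0$. Monotone decrease plus lower bound yields $\sum_k \bigl(\|\beta^{k+1}-\beta^k\|_{\bm S}^2 + \|r^{k+1}-r^k\|^2\bigr) < \infty$, hence $\|w^{k+1}-w^k\|\to 0$.

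Next I would establish boundedness of $\{w^k\}$. Coercivity of $L_\mu(w^k)$ in $\beta$, supplied by the $\lambda_2\|\beta\|_2^2/2$ term common to Snet/Mnet/Cnet combined with the restricted eigenvalue assumption $X^\top X \succ \underline{\mu} I_p$ (used to handle the quadratic residual), forces $\{\beta^k\}$ to be bounded; $\{r^k\}$ is bounded because $r^{k+1} = y - X\beta^{k+1} + \mu^{-1}(d^k-d^{k+1})$ with vanishing dual increments; and $\{d^k\}$ is bounded since $d^{k+1}=\nabla \mathcal{L}_{\tau,*}(r^{k+1})$ whose range is $[\tau-1,\tau]^n$ by inspection of (\ref{c}) and (\ref{ka}). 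The subgradient bound is then obtained by reading off the $\beta$- and $r$-subproblem optimality conditions and reconstituting $\partial L_\mu(w^{k+1})$: the leftover terms are precisely of the form $\mu X^\top X(\beta^{k+1}-\beta^k) - \bm S(\beta^{k+1}-\beta^k)$ and $d^k-d^{k+1}$, giving $\text{dist}(0,\partial L_\mu(w^{k+1})) \leq C(\|\beta^{k+1}-\beta^k\| + \|r^{k+1}-r^k\| + \|d^{k+1}-d^k\|)$. Combined with boundedness, this shows every cluster point $w^*$ is a critical point of $L_\mu$.

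The main obstacle is upgrading this subsequential convergence to convergence of the entire sequence. The clean route is the Attouch--Bolte--Svaiter / Wang--Yin--Zeng framework: $\mathcal{L}_{\tau,*}$, SCAD, MCP, Capped-$\ell_1$, and $\|\cdot\|_2^2$ are all semi-algebraic, so $L_\mu$ is a KL function, and the sufficient-decrease plus subgradient-bound properties established above imply finite length $\sum_k \|w^{k+1}-w^k\| < \infty$, hence the Cauchy property and convergence $w^k\to w^*$. The delicate point in executing this step is that the linearized update produces an extra $\bm S$-weighted term in the subgradient bound, so one may need to work with a potential function of the form $L_\mu(w^k) + \tfrac{c}{2}\|\beta^k-\beta^{k-1}\|^2$ rather than $L_\mu$ itself to simultaneously satisfy the KL inequality and the descent/subgradient conditions. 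With this modification, the standard KL argument closes the proof.
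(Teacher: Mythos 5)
Your proposal follows essentially the same route as the paper's proof in Appendix~\ref{B2}: a sufficient-decrease inequality with the same coefficient $\tfrac{\mu}{2}-\tfrac{n}{\mu\min\{c^2,\kappa^2\}}$ extracted from the dual identity $\bm d^{k+1}=\nabla\mathcal{L}_{\tau,*}(\bm r^{k+1})$ and the Lipschitz bound of Lemma~\ref{lem1} (Lemma~\ref{lem2}), boundedness via coercivity of the penalty together with the restricted eigenvalue condition (Lemma~\ref{lem3}), vanishing successive differences and criticality of limit points (Lemmas~\ref{lem4}--\ref{lem5}), and a final KL/Cauchy-sequence argument, which the paper outsources to Theorem~3.1 of \cite{Guo2016Convergence} while you state it explicitly. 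The only execution differences are minor and both valid: you obtain the lower bound on $L_\mu$ directly from the descent lemma and the summability of $\|\bm\beta^{k+1}-\bm\beta^k\|_2^2$ by retaining the $-\tfrac{1}{2}\|\bm\beta^{k+1}-\bm\beta^k\|_{\bm S}^2$ term, whereas the paper argues via a cluster point plus continuity and via the dual-update recursion combined with $\bm X^\top\bm X > \underline{\mu}\bm I_p$, respectively.
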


\begin{rem}
To solve the NPSQR with $\mathcal{L}_{\tau,c}$, the condition $\mu > \frac{\sqrt {2n} \max\{\tau, 1-\tau\}}{c}$ guarantees convergence of the algorithm. On the other hand, for the NPSQR with $\mathcal{L}_{\tau,\kappa}$, the algorithm can converge as long as $\mu > \frac{\sqrt {2n}}{\kappa}$. The reason behind the different requirements for $\mu$ in these two models lies in the disparity of the Lipschitz constant of the first derivative of the two smooth quantile losses. 
\end{rem}

A reasonable concern arises regarding the assumption about $\mu$ in this theorem. When $n$ is large and either $c$ or $k$ takes small values, $\mu$ may become large. However, this does not affect the convergence of the algorithm. 
An intuitive explanation lies in the optimization formula (\ref{AL}). 
Our loss function is not divided by $n$ like the empirical loss. 
Compared to the case of empirical loss, our objective function is multiplied by a factor of $n$.
 If we consider the normalized loss function $\frac{\sum_{m=1}^{M} \sum_{l=1}^{n_m} \mathcal{L}_{\tau,*}( r_{m,l})}{n}$ instead of the unnormalized one $\sum_{m=1}^{M} \sum_{l=1}^{n_m} \mathcal{L}_{\tau,*}( r_{m,l})$, then $\mu$ only needs to be greater than $\frac{\sqrt{2}}{\sqrt{n} \min\{c,\kappa \}}$. 
When $n$ is relatively large, $\mu$ may be relatively small or even close to $0$.

\subsection{Comparison with Other Parallel ADMM Algorithms}\label{sec33}
There is a dearth of parallel algorithms for addressing the NPSQR issue. However, recent research has introduced parallel ADMM algorithms for nonconvex penalized quantile regression (NPQR), such as \cite{Yu2017A}, \cite{Fan2021Penalized} and \cite{Wen2023Feature}. 
Ignoring some unrelated constant terms,  as the values of $c$ and $\kappa$ approach 0, the function $\mathcal{L}_{\tau,*}$ converges to the quantile loss. Consequently, our algorithm can be applied to solve the NPQR problem.
To provide a more comprehensive comparison, we review several parallel ADMM algorithms that address the NPQR problem.

\textbullet  \quad QPADM in  \cite{Yu2017A}. 
To have a parallel structure, auxiliary variables $\{\bm \beta_m = \bm \beta\}_{m=1}^{M}$ are added to (\ref{CNPSQR}). This modification transforms the constrained optimization problem into the following form,
\begin{align}\label{CCNPSQR}
\min_{\bm \beta, \{\bm r_m, \bm \beta_m\}_{m=1}^M}  \quad &\sum_{m=1}^{M} \rho_{\tau}(\bm r_m) + {P}_\lambda(|\bm \beta|), \notag \\
\text{s.t.}  \quad  \bm X_m \bm \beta_m +  \bm r_m &= \bm y_m, \bm \beta_m = \bm \beta, m=1,2,\dots,M.
\end{align}
Problem (\ref{CCNPSQR}) has the following augmented Lagrangian,
\begin{align}\label{SAL}
{L_\mu }(\bm \beta, \{\bm r_m, \bm \beta_m, \bm d_m\}_{m=1}^M) =\sum_{m=1}^{M} \rho_{\tau}(\bm r_m) +  {P}_\lambda(|\bm \beta|) - \sum_{m=1}^{M} {\bm d_{1m}^\top}(\bm{X}_m \bm{\beta}_m + \bm{r}_m - \bm y_m) \notag \\
 + \frac{\mu }{2} \sum_{m=1}^{M} \left\|\bm{X_m} \bm{\beta}_m + \bm{r}_m - \bm y_m \right\|_2^2 - \sum_{m=1}^{M} {\bm d_{2m}^\top}(\bm{\beta}_m - \bm{\beta}) +  \frac{\mu }{2}  \sum_{m=1}^{M}  \left\| \bm{\beta}_m - \bm{\beta} \right\|_2^2,
\end{align} 
where $\bm d_{1m} \in \mathbb R^{n_m}$ and $\bm d_{2m} \in \mathbb R^p$ are the dual variables. The parallel ADMM iterative  scheme of (\ref{SAL}) is 
\begin{equation}\label{threeupadmm}\small
\left\{ \begin{array}{l}
\bm \beta^{k+1} \ = \mathop {\arg \min }\limits_{\bm \beta} \left\{   {P}_\lambda(|\bm \beta|) + \frac{\mu }{2} \sum_{m=1}^{M} \left\|\bm{\beta}_m^k - \bm \beta - \bm d_{2m}^k/\mu \right\|_2^2    \right\},\\
\bm r_m^{k+1} \ =\mathop {\arg \min }\limits_{\bm r_m} \left\{ \rho_{\tau}(\bm r_m) +  \frac{\mu }{2}\left\|\bm{X}_m \bm{\beta}_m^{k+1} + \bm{r}_m - \bm y_m - \bm d_{1m}^k/\mu \right\|_2^2 \right\}, m=1,2,\dots,M,\\
\bm \beta_m^{k+1} \ =\mathop {\arg \min }\limits_{\bm \beta_m} \left\{\left\|\bm{X}_m \bm{\beta}_m + \bm{r}_m^k - \bm y_m - \bm d_{1m}^k/\mu \right\|_2^2 + \left\|\bm{\beta}_m - \bm \beta^{k+1} - \bm d_2^k/\mu \right\|_2^2     \right\},m=1,2,\dots,M,\\
\bm{d}_{1m}^{k+1}=\bm{d}_{1m}^{k}-\mu(\bm{X}_m\bm{\beta}_m^{k+1} + \bm{r}_m^{k+1} - \bm y_m),m=1,2,\dots,M,\\
\bm{d}_{2m}^{k+1}=\bm{d}_{2m}^{k}-\mu(\bm{\beta}_m^{k+1} - \bm \beta^{k+1}), m=1,2,\dots,M,.
\end{array} \right.
\end{equation}
Note that when $M=1$, the algorithm becomes a nonparallel version of ADMM. 
Increasing the consensus constraint $\bm \beta_m = \bm \beta$ aids in eliminating internal loops and adapting to parallel frameworks.  However, this change introduces an additional step in solving $\bm \beta_m$, which involves finding the inverse of $\bm X_m^\top \bm X_m + \bm I_{n_m}$. Although \cite{Yu2017A} suggested using the Woodbury matrix identity to alleviate the computational burden of inverting these matrices, matrix multiplication can still be time-consuming, especially when $p$ is large. 
Motivated by the maximization in QICD proposed by \cite{Peng2015An}, QPADM transforms the nonconvex problem into weighted $\ell_1$ soft-shrinkage operator for $\bm \beta^{k+1}$-subproblem. However, this approximation may result in an excessive number of iterative steps.

\textbullet  \quad QPADMslack in \cite{Fan2021Penalized}. 
Inspired by \cite{Guan2018An}, \cite{Fan2021Penalized} relaxed $\bm r_m$ in (\ref{SAL}) to $\bm u_m- \bm v_m$ with $\bm u_m \ge \bm 0$ and $\bm v_m \ge \bm 0$ ($m=1,2,\dots,M$), and made an improvement in solving the $\bm \beta^{k+1}$-subproblem.  These changes can reduce the iteration steps and improve the computational accuracy of QPADM in calculating NPQR. The constrained optimization problem is 
\begin{align}\label{QPs}
\min_{\bm \beta, \{\bm u_m \ge \bm 0, \bm v_m \ge \bm 0, \bm \beta_m\}_{m=1}^M}  \quad &\sum_{m=1}^{M}  \left({\tau}\bm u_m +  (1-\tau) \bm v_m \right)+ {P}_\lambda(|\bm \beta|), \notag \\
\text{s.t.}  \quad &  \bm X_m \bm \beta_m +  \bm u_m - \bm v_m = \bm y_m,  \notag \\
& \bm \beta_m = \bm \beta, m=1,2,\dots,M.
\end{align}
The iteration of QPADMslack shares similar steps with that of QPADM, yet two main differences exist. Firstly, the $\bm r$ subproblem is replaced by the $\bm u$ and $\bm v$ subproblems. Secondly, in contrast to the approximate solution of $\bm \beta$ in QPADM, QPADMslack utilizes the approximate closed-form solution for SCAD and MCP, as proposed by \cite{Gong2013A} and \cite{Guan2018An}.
The term ``approximate closed-form solution" means finding the minimum point by comparing the minimum value of the objective function among several candidate minimum points. By contrast, our LADMM algorithm utilizes the derived proximal operators in Section \ref{sec23} to directly provide a closed-form solution for this nonconvex problem.
\begin{figure}[H]
\centering
\caption{\footnotesize{Average time in 100 repetitions for computing  the maximum eigenvalue of a  matrix.}}\label{fig2}
\includegraphics[width=12cm]{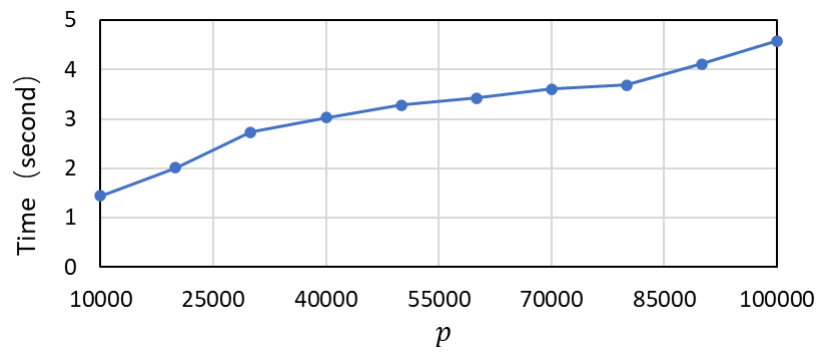}
\end{figure}

The above two algorithms are parallel algorithms specifically designed for working with sample data. The QPADM algorithm necessitates iteration over $(2M+1)p+2n$ variables, whereas the QPADMslack algorithm requires iteration over $(2M+1)p+3n$ variables. When $p$ is very large,  the Woodbury matrix identity that needs to be used is
\begin{align*}
(\bm X_m^\top \bm X_m + \bm I_p)^{-1}  = \bm I_p - \bm X_m^\top(\bm X_m \bm X_m^\top + \bm I_{n_m})^{-1}\bm X_m.
\end{align*}
The matrix multiplication here is quite time-consuming. Referring to Algorithm \ref{alg2}, we observe that our LADMM algorithm only requires iterating over $p+2n$ variables and computing the maximum eigenvalue of the matrix $\mu \bm X^\top \bm X$ (or $\{\mu \bm X_m^\top \bm X_m\}_{m=1}^M$). According to the suggestion by  \cite{Liang2024Linearized}, we use the power method provided by \cite{Golub1996Matrix}  to compute the maximum eigenvalue in the  LADMM algorithm. This method is very efficient. We illustrate the time required to calculate the maximum eigenvalue as $p$ changes in Figure \ref{fig2}. The R code for computing the maximum eigenvalue is also available in our  R package.

\textbullet  \quad  Feature-splitting ADMM  in \cite{Wen2023Feature}. Inspired by  the three-block ADMM in \cite{Sun2015A},   \cite{Wen2023Feature} proposed a feature-splitting algorithm for solving  $\ell_1$ quantile regression.   Furthermore, guided by the theoretical results of \cite{Fan2014Strong}, they utilized the LLA algorithm to solve NPQR. In particular, they used $\ell_1$ quantile regression as the initial value and iterated the calculation several  times to obtain the solution, with a high probability of convergence to the true solution of NPQR.   Feature-splitting ADMM (FSADMM)  requires iterating over $p+(3M-1)n$ variables. Therefore, when $n$  is relatively large, FSADMM may encounter computational difficulties with many local machines.

\section{Simulation Studies}\label{sec4}
In this section, we aim to demonstrate the model selection, estimation accuracy, and computational efficiency of the proposed LADMM algorithm in both nonparallel and parallel environments. To accomplish this, we apply the LADMM algorithm to solve various problems, including NPQR (\cite{Wang2012Quantile}), nonconvex penalized least squares and Huber regression (\cite{Fan2018I-LAMM} and \cite{Pan2021Iteratively}), as well as NPSQR (\cite{Mkhadri2017A}).
To choose the optimal values for the regularization parameters $\lambda_1$ and/or $\lambda_2$, we follow the approach proposed by \cite{Lee2014Model} and \cite{Yu2017A}. We minimize the HBIC criterion, defined as
\begin{align}
\text{HBIC}(\lambda_1, \lambda_2) = \log\left(\sum_{i=1}^{n} \mathcal{L}(y_i - \bm x_i^\top \hat{\bm \beta}_{\lambda_1, \lambda_2})\right) + |S_{\lambda_1, \lambda_2}| \frac{\log(\log n)}{n} C_n.
\end{align}
Here, $\mathcal{L}$ represents a specific loss function, and $\hat{\bm \beta}_{\lambda_1, \lambda_2}$ corresponds to the nonconvex estimator obtained. $|S_{\lambda_1, \lambda_2}|$ denotes the number of nonzero coordinates in $\hat{\bm \beta}_{\lambda_1, \lambda_2}$, and the value $C_n = 6 \log (p)$ is recommended by \cite{Peng2015An} and \cite{Fan2021Penalized}.
By minimizing the HBIC criterion, we can effectively select the optimal $\lambda_1$ and/or $\lambda_2$ values for our nonconvex estimators. These choices allow us to balance the trade-off between model complexity and goodness of fit.

All experiments were performed using R on a computer equipped with an AMD Ryzen 9 7950X 16-Core Processor running at 4.50 GHz and with 32 GB RAM. To facilitate the implementation and usage of the LADMM algorithm, we have developed an R package called PIPADMM. The package is available at the following GitHub repository: \url{https://github.com/xfwu1016/PIPADMM}.

\subsection{Simulation for NPQR}\label{sec41}
In the first simulation, we apply the LADMM algorithm to solve the NPQR problem and compare its performance with several recent algorithms, including QRADM from \cite{Yu2017A}, QRADMslack from \cite{Fan2021Penalized}, and Feature-splitting ADMM (FSADMM) from \cite{Wen2023Feature}.
While both \cite{Yu2017A} and \cite{Fan2021Penalized} provided R packages for their respective algorithms, these packages are only compatible with Mac operating systems.
Additionally, the QPADM package only provide estimated coefficients and lacked information such as iteration count and iteration time. To ensure fairness in the comparison, we have rewritten the R code for the QPADM and QPADMslack algorithms, based on the descriptions provided in the respective papers.

For all tested ADMM algorithms, we set the maximum iteration number to 500, with the stopping criterion defined as follows:
\begin{align*}
\frac{\|\bm \beta^k - \bm \beta^{k-1} \|_2}{\max (1, \|\bm \beta^k \|_2)} \le 10^{-4}.
\end{align*}
This stopping criterion ensures that the difference between consecutive iterations of the estimated coefficients does not exceed a specified threshold.

Regarding our simulation studies, we employed the simulated models in the simulation studies of \cite{Peng2015An}, \cite{Yu2017A}, \cite{Fan2021Penalized} and \cite{Wen2023Feature}. Specifically, we generate data from the  heteroscedastic regression model  $y = x_6 + x_{12} + x_{15} + x_{20} + 0.7 x_1 \epsilon$, where $\epsilon \sim N(0,1)$. The covariates $(x_1,x_2,\dots,x_p)$ are generated in two steps.

\textbullet  \quad  First, we generate $\bm \tilde{x} = (\tilde{x}_1, \tilde{x}_2, \dots, \tilde{x}_p)^\top$ from a $p$-dimensional multivariate normal distribution $N(\bm 0, \bm \Sigma)$, where $\Sigma_{ij} = 0.5^{|i-j|}$ for $1 \le i,j \le p$.

\textbullet  \quad Second, we set $x_1 = \Phi(\tilde x_1)$ and $x_j = \tilde x_j$ for $j=2,\dots,p$.

In nonparallel environments ($M=1$), we simulate datasets with sizes ($n$, $p$) = (30,000, 1,000), (1,000, 30,000), (10,000, 30,000), and (30,000, 30,000). In parallel environments ($M \ge 2$), we simulate datasets with sizes ($n$, $p$) = (200,000, 500) and (500,000, 1,000). We run 500 independent simulations, and the average results for both nonparallel and parallel computations are presented in Table \ref{tab1} and Table \ref{tab2}, respectively. Due to space limitations, this section will only focus on the results of the SCAD ($a=3.7$) and $\tau = 0.7$, while the other simulation results are included in Appendix \ref{C.1}.

\begin{table}[H]
\caption{\footnotesize{Comparison of various ADMMs with SCAD penalty of different data sizes.}}\label{tab1}
\resizebox{1\columnwidth}{!}{
\begin{threeparttable}
\begin{tabular}{lcccccccccc}
\toprule[1.5pt]
\quad$(n,p)$ &  \multicolumn{5}{l}{(30000,1000)}   & \multicolumn{5}{l}{(1000,30000)}   \\
\cmidrule(lr){2-6}\cmidrule(lr){7-11}
             &  P1 &  P2 &  AE   &  Ite   &  Time  &  P1 &  P2 &  AE   &  Ite     &  Time  \\
\midrule[1pt]
QPADM        & 100 & 100 & \bf{0.016(0.03)} & 104.8(11.9)& 38.62(6.45)   & 0 & 100 & 7.585(0.91) & 500(0.0)  & 1924.5(55.3)\\
QPADMslack   & 100 & 100 & 0.058(0.03) & 51.2 (5.58)& 35.95(4.35)   & 0 & 100 & 8.465(1.12) & 216(13.5) & 1133.8(39.4)\\
FSADMM      & 100 & 100 & 0.017(0.04) & 500(0.00)  & 155.6(11.3)  &100& 100 & 0.621(0.09) & 500(0.00)  & 134.1(10.1)\\
LADMM        & 100 & 100 & 0.022(0.03) & \bf{44(2.29)}   & \bf{34.53(4.12)}   &100& 100 & \bf{0.482(0.05)} & \bf{166(11.9)} & \bf{60.7(5.0)}\\
\toprule[1.5pt]
\quad$(n,p)$ &  \multicolumn{5}{l}{(10000,30000)}  & \multicolumn{5}{l}{(30000,30000)}   \\
\cmidrule(lr){2-6}\cmidrule(lr){7-11}
             &  P1 &  P2 &  AE   &  Ite   &  Time  &  P1 &  P2 &  AE   &  Ite     &  Time  \\
\midrule[1pt]
QPADM        & 100 & 100 & 3.891(1.23) & 500(0.00)  & 2178.4(66.8) &100 & 100 & 1.758(0.13) & 500(0.00)   & 3264.3(56.3)\\
QPADMslack   & 100 & 100 & 4.171(1.64) & 252(20.7) & 1429.6(35.7) &100 & 100 & 2.263(0.25) & 327(21.1)  & 2126.3(41.2)\\
FSADMM      & 100 & 100 & 0.592(0.16) & 500(0.00)  & 238.9(18.1)  &100 & 100 & 1.092(0.10) & 500(0.00)   & 424.2(17.4)\\
LADMM        & 100 & 100 & \bf{0.348(0.08)} & \bf{182(13.6)} & \bf{75.6(6.6)}    &100 & 100 & \bf{0.663(0.09)} &\bf{ 239(15.2)}  & \bf{183.6(8.5)}\\
\toprule[1.5pt]
\end{tabular}
\begin{tablenotes}
        \footnotesize
        \item[*] The meanings of the notations used in this table are as follows: P1 (\%): proportion that $x_1$ is selected; P2 (\%): proportion that $x_6$, $x_{12}$, $x_{15}$, and $x_{20}$ are selected; AE: absolute estimation error; Ite: number of iterations; Time (s): running time. Numbers in the parentheses represent the corresponding standard deviations, and the optimal solution is represented in bold.
\end{tablenotes}
\end{threeparttable}}
\end{table}
Table \ref{tab1} provides evidence that LADMM outperforms QPADM and QPADMslack in terms of computational speed and estimation accuracy when the dimensionality of the problem $p$ is large. Furthermore, LADMM outperforms FSADMM when the sample size $n$ is large. To further illustrate the advantages of LADMM, we present the computational time in Figure \ref{fig3}. 
These numerical results indicate that LADMM demonstrates remarkable advantages in terms of computational efficiency and accuracy compared to other ADMM algorithms, particularly for scenarios where both \( n \) and/or \( p \) are large.
In Table \ref{tab2}, we observe that when multiple local machines are employed, LADMM shows comparable performances in terms of computational time and accuracy compared to QRADM and QRADMslack. This result is also visualized in Figure \ref{fig4}.  Table \ref{tab2} and Figure \ref{fig4}  underscore the advantages of utilizing LADMM in parallel computing settings. It is worth noting that our solution is not particularly influenced by $M$,  which is consistent with the theoretical results of Section \ref{sec32}.
\begin{table}[H]
\caption{\footnotesize{Comparison of various parallel ADMMs with SCAD penalty.}}\label{tab2}
\resizebox{1\columnwidth}{!}{
\begin{threeparttable}
\begin{tabular}{lcccccccc}
\toprule[1.5pt]
  &\multicolumn{2}{l}{QPADM}&\multicolumn{2}{r}{$(200000,500)$}&\multicolumn{2}{l}{QPADM}&\multicolumn{2}{r}{$(500000,1000)$} \\ 
\cmidrule(lr){2-5}\cmidrule(lr){6-9}
M & Nonzero &  AE   &  Ite   &  Time                        &  Nonzero &  AE   &  Ite   &  Time\\
\midrule[1pt]
 5  & 37.2(1.83) & \bf{0.053(0.0008)} & 342.8(23.9) & 72.8(5.63)   & 25.7(1.37) & 0.043(0.0007) & 458.3(35.8) & 182.8(10.5) \\
20  & 38.4(2.03) & 0.068(0.00113) & 375.1(29.2) & 39.8(4.73)   & 27.4(1.72) & 0.051(0.0009) & 483.1(39.2) & 99.5(5.90)  \\
100 & 40.6(2.3) & 0.085(0.00251) & 419.7(67.8) & 18.6(3.34)   & 33.5(2.15) & 0.077(0.0018) & 496.7(17.8) & 38.6(4.74)   \\
\toprule[1.5pt]
   & \multicolumn{2}{l}{QPADMslack}&\multicolumn{2}{r}{$(200000,500)$}&\multicolumn{2}{l}{QPADMslack}&\multicolumn{2}{r}{$(500000,1000)$} \\  
\cmidrule(lr){2-5}\cmidrule(lr){6-9}
M  &  Nonzero &  AE           &  Ite        &  Time         &  Nonzero &  AE   &  Ite   &  Time\\
\midrule[1pt]
 5  & 34.6(1.91) & 0.062(0.0017) & 214.4(25.4) & 31.2(4.05)   & 24.9(1.23) & 0.039(0.0006) & 311.7(28.2) & 79.5(7.93)\\
 20 & 39.7(2.15) & 0.075(0.0024) & 236.3(26.9) & 10.7(2.11)   & 29.5(1.50) & 0.048(0.0010) & 347.3(32.5) & 41.2(5.85)\\
100 & 41.2(2.55) & 0.094(0.0031) & 375.7(82.6) & \bf{4.61(0.83)}    & 32.3(1.91) & 0.074(0.0021) & 396.5(72.3) & 20.9(2.44)\\
\toprule[1.5pt]
   & \multicolumn{2}{l}{LADMM}&\multicolumn{2}{r}{$(200000,500)$}&\multicolumn{2}{l}{LADMM}&\multicolumn{2}{r}{$(500000,1000)$} \\  
\cmidrule(lr){2-5}\cmidrule(lr){6-9}
M  &  Nonzero &  AE           &  Ite        &  Time         &  Nonzero &  AE   &  Ite   &  Time\\
\midrule[1pt]
 5  & \bf{5.63(0.41)} & 0.055(0.0008) & \bf{49.7(1.21)}   & {19.7(1.02)}    & {5.42(0.35)} & \bf{0.032(0.0005)} & \bf{78.7(3.42)}   & {32.6(2.32)}\\
 20 & {5.64(0.41)} & {0.056(0.0008)} &{53.0(1.32)}  & {10.3(0.73)}    & {5.42(0.37)} & {0.036(0.0006)} &  {79.6(3.54)}   & {20.5(1.83)}\\
100 & {5.63(0.43)} & {0.055(0.0008)} & {56.9(1.38)}   & {5.32(0.45)}     &\bf{5.42(0.34)} & {0.035(0.0006)} & {81.0(3.60)}   & \bf{15.5(1.22)}\\
\toprule[1.5pt]
\end{tabular}
\begin{tablenotes}
        \footnotesize
        \item[*] P1 and P2 are not presented in Table  \ref{tab2} because all methods have a value of 100 for these two metrics. It is clear that Nonzero, AE, and Ite of LADMM are not significantly affected by the value of $M$. 
\end{tablenotes}
\end{threeparttable}}
\end{table}

\begin{figure}[htbp]
\centering
\includegraphics[width=17cm]{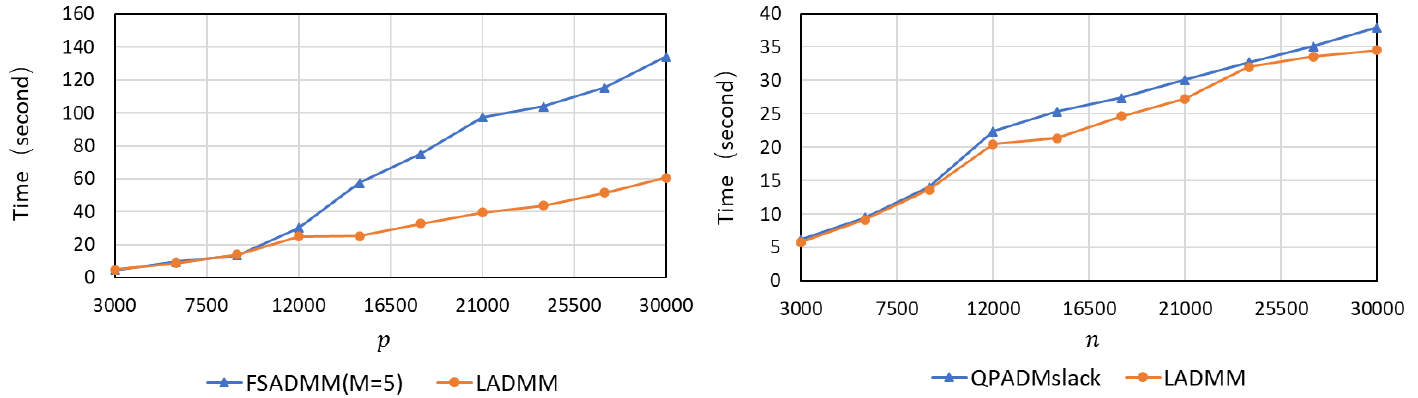}
\caption{\footnotesize{Time comparison of LADMM, QPADMslack and FSADMM. On the left, the dimension of the data is $n=500$; On the right, the sample size of the data is $p=500$.}}\label{fig3}
\end{figure}

\begin{figure}[htbp]
\centering
\includegraphics[width=17cm]{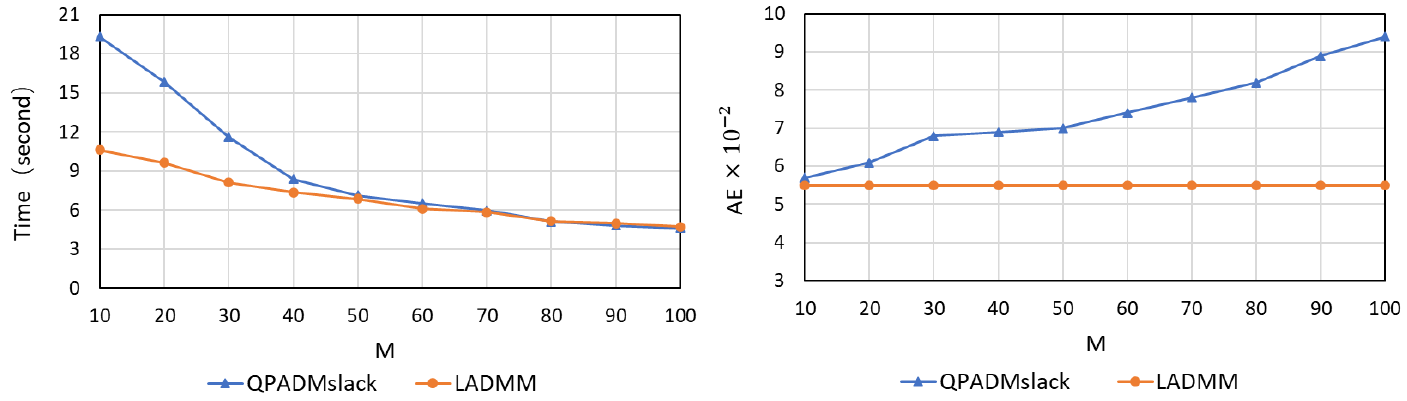}
\caption{\footnotesize{Computational time  and absolute error (AE) of QPADMslack and LADMM  with $n= 400000, p=1000$.}}\label{fig4}
\end{figure}

\subsection{Simulation for NPLS and NPHR}\label{sec42}
In this simulation, we utilize the proposed LADMM method and the ILAMM algorithm proposed in \cite{Fan2018I-LAMM} and \cite{Pan2021Iteratively} to solve nonconvex penalized least squares regression (NPLS) and nonconvex penalized Huber regression (NPHR). We then compare the performance of these methods. \cite{Pan2021Iteratively} provided  a R package for ILAMM  which  is available at \url{https://github.com/XiaoouPan/ILAMM}. 
Similar to \cite{Pan2021Iteratively}, we generate the heteroscedastic model, $y_i  = \bm x_i^\top \bm \beta + c^{-1}(\bm x_i^\top \bm \beta)^2  \epsilon_i$ with $\bm x_i \sim N(\bm 0,\bm I_p)$ for $i=1,\dots,n$, where the constant $c$ is chosen as $c=\sqrt 3 \bm \beta^\top \bm \beta$ such that  $E\left(c^{-1}(\bm x_i^\top \bm \beta)^2 \right)^2 = 1$. The true vector of regression coefficients $\bm \beta$ is $(4,3,2,-2,-2,-2,0,\dots,0)^\top$. Moreover, we consider the following two error distributions.
\begin{itemize}\small
    \item Normal distribution: $\epsilon_i \sim  N(\mu,\sigma^2)$ with $\mu = 0$ and standard deviation $\sigma=1.5$.
    \item  Lognormal distribution:  $\epsilon_i \sim  LN(\mu,\sigma^2)$  with $\mu = 0$ and  $\sigma=1.2$. 
\end{itemize}

\begin{table}\small
\caption{\footnotesize{Comparison of LS-SCAD and Huber-SCAD  Calculated by LADMM and ILAMM}}\label{tab3}
\resizebox{1\columnwidth}{!}{
\begin{threeparttable}
\begin{tabular}{lccccccccc}
\toprule[1.5pt]
         &           & \multicolumn{4}{l}{LS-SCAD} & \multicolumn{4}{l}{Huber-SCAD} \\
\cmidrule(lr){3-6}\cmidrule(lr){7-10}
 $\quad(n,p)$ & Algorithm & FP & FN & AE & Time         & FP & FN & AE & Time            \\
\midrule[1pt]
\multirow{2}*{(100,1000)}    & ILAMM & 0 & 1 & 0.34(0.08) & 8.8(1.2)    & 0 & 1 & 0.41(0.07) & 23.2(5.2)  \\
                             & LADMM & 0 & 0 & \bf{0.29(0.07)} & \bf{1.3(0.2)}    & 0 & 0 & \bf{0.34(0.07)} & \bf{1.41(0.3)}   \\
\multirow{2}*{(100,10000)}   & ILAMM & 0 & 0 & 0.50(0.10) & 175.3(10.3) & 0 & 0 & 0.52(0.18) & 362.9(15.9) \\
                             & LADMM & 0 & 0 & \bf{0.41(0.09)} & \bf{7.1(1.0)}    & 0 & 0 & \bf{0.49(0.16)} & \bf{7.8(0.8)}    \\
\multirow{2}*{(10000,10000)} & ILAMM & 0 & 1 & \bf{0.05(0.01)} & 1885.0(121.4)  & 0 & 1 & 0.07(0.01) & 2314(102.3)  \\
                             & LADMM & 0 & 0 & 0.06(0.02) & \bf{31.8(6.7)}  & 0 & 0 & \bf{0.01(0.00)} & \bf{35.2(7.5)}  \\
\toprule[1.5pt]
\end{tabular}
\begin{tablenotes}
        \footnotesize
        \item[*] False Positive (FP) refers to the number of variables with a coefficient of zero that are mistakenly included in the final model, while False Negative (FN) refers to the number of variables with non-zero coefficients that are omitted from the model.
\end{tablenotes}
\end{threeparttable}}
\end{table}
Table \ref{tab3} presents the simulation results of LADMM and ILAMM solving least squares SCAD (LS-SCAD) and Huber SCAD models (Huber-SCAD) when the errors follow a normal distribution. The results  demonstrate that LADMM outperforms the ILAMM in effectively solving both NPLS and NPHR models.  In addition, if $p$ is large, ILAMM may be time-consuming. 
The simulation results for the  Lognormal distribution, as well as other nonconvex penalties, are presented in the Appendix \ref{C.2}.

\subsection{Simulation for NPSQR}\label{sec43}
In this example,  we  utilize the proposed LADMM and algorithm in \cite{Mkhadri2017A}  to solve  nonconvex penalized smooth quantile regression (NPSQR)  and compare their performances. \cite{Mkhadri2017A} provided an efficient R package for calculating NPSQR, which can be found at the following \url{ https://github.com/KarimOualkacha/cdaSQR/tree/master}.  Following the scenario 3 in Section 4.2 of \cite{Mkhadri2017A}, we generate data sets with $y_i  = \bm x_i^\top \bm \beta  + \epsilon_i$,  where $\epsilon \sim N(0,\sigma^2 \bm I_p) $ and $\sigma = 5$. The covariates $(x_1,x_2,\dots,x_p)$ are generated from  $N(\bm 0, \bm \Sigma)$, where $\Sigma_{ij} = 0.5^{|i-j|}$ for $1 \le i,j \le p$. The true vector of regression coefficients is
$$\bm \beta = (  \underbrace {3,\dots,3}_{5}, \underbrace {-1.5,\dots,-1.5}_5,\underbrace {1,\dots,1}_5,\underbrace {2,\dots,2}_5,\underbrace {0,\dots,0}_{p-20})^\top.$$
The comparison results of our algorithm LADMM and cdaSQR are summarized in Table \ref{tab4}. These results are all about Snet, and the results for Mnet and Cnet are in the Appendix \ref{C.3}. In terms of computational accuracy and speed, LADMM and cdaSQR perform similarly, but LADMM has significant advantages in terms of FP and FN.

\begin{table}[H]
\caption{\footnotesize{Comparison of  LADMM and cdaSQR with $\delta=0.5$ and $\tau=0.5$}}\label{tab4}
\resizebox{1\columnwidth}{!}{
\begin{tabular}{lccccccccc}
\toprule[1.5pt]
         &                       & \multicolumn{4}{l}{cdaSQR}              & \multicolumn{4}{l}{LADMM} \\
\cmidrule(lr){3-6}\cmidrule(lr){7-10}
 $\quad(n,p)$ & Loss function              & FP & FN & AE & Time                     & FP & FN & AE & Time       \\
\midrule[1pt]
\multirow{2}*{(5000,500)}    & $\mathcal{L}_{\tau,c}$ & 2.52 & 7.39  & 0.47(0.05)  & \bf{3.36(0.36)} & \bf 0 & \bf{0.11}  & \bf{0.34(0.03)}  & 3.62(0.37) \\
                             & $\mathcal{L}_{\tau,\kappa}$  & 3.25 & 8.36  & 0.35(0.04)  & \bf{3.47(0.42)} & \bf 0 & \bf{0.08}  & \bf{0.29(0.02)}  & 3.78(0.38) \\
\multirow{2}*{(10000,1000)}  & $\mathcal{L}_{\tau,c}$ & 3.98 & 10.36 & 1.38(0.08)  & 5.11(0.82) & \bf 0 &  \bf 0.01  &  \bf \bf  0.88(0.05)  & \bf 4.71(0.67) \\
                             & $\mathcal{L}_{\tau,\kappa}$  & 2.52 & 9.47  & 1.22(0.08)  &\bf 4.76(0.71) & \bf  0 &  \bf  0.04  & \bf  \bf  \bf 0.74(0.04)  & 4.83(0.69) \\
\multirow{2}*{(20000,2000)}  &  $\mathcal{L}_{\tau,c}$ & 3.69 & 15.36 & 1.89(0.12)  & 24.33(2.39)&  \bf  0 & \bf  0.01  & \bf 0.96(0.09)  & \bf 20.39(2.48) \\
                             &  $\mathcal{L}_{\tau,\kappa}$  & 4.40 & 14.77 & 1.73(0.10)  & \bf 19.25(2.11)&  \bf 0 &  \bf 0.05  & \bf 0.89(0.08)  & 19.64(2.36) \\
\toprule[1.5pt]
\end{tabular}}
\end{table}

\section{Real Data Studies}\label{sec5}
In this section, we compare the performance of several parallel ADMM algorithms in an online publicly available dataset at \url{http://archive.ics.uci.edu/ml/
datasets/Online+News+Popularity}. This dataset provides a summary of the popularity, measured in terms of shares, as well as 60 features of 39,644 news published by Mashable over a two-year period. The features include various aspects such as binary variables indicating news categories (Lifestyle, Entertainment, Business, Social Media, Technology, or World), published time (day of the week and weekend or not), average word length, number of keywords, rate of nonstop words, and more.
For more information about the dataset, please refer to the study conducted by \cite{Fernandes2015A}.

This dataset is heterogeneous in nature, and it has been analyzed by \cite{Fan2021Penalized} using NPQR. Their aim was to analyze how various features impact the popularity of news, particularly focusing on those that had gained high levels of popularity. They  identified $x_{14}$ (Entertainment) and $x_{27}$  (number of key words) as the two most influential features. However, the empirical results in \cite{Fan2021Penalized} also indicated that  QPADM and QPADMslack algorithms tend to select a relatively larger number of variables, and this tendency becomes more pronounced as $M$ increases. 
The following empirical results indicate that LADMM does not have this increasing trend, and also has good prediction accuracy. 

As in the study by \cite{Fan2021Penalized}, we standardize the non-binary factors to have zero mean and unit variance. The features and response variables are denoted as $x_1, x_2, \dots, x_{60}$ and $y$, respectively. To assess the performance of the algorithm, we randomly partition the complete dataset 100 times. Each partition consisted of randomly selecting 35,000 samples as the training set, with the remaining samples designated as the test set.
To replicate the scenario of parallel computing, we randomly divide the training set into subsets of equal size. In our analysis, we consider values of $M=10$ and $M=100$. We then record the average number of selected nonzero coefficients (Nonzero), the prediction error (PE), the number of iterations (Ite), and the CPU running time (in seconds) of the algorithms.  The prediction error (PE) is calculated as follows: $\text{PE} = \frac{1}{n_{test}} \sum_{i=1}^{n_{test}} |y_i - \hat{y}_i|$, where $n_{test} = 4644$ represents the sample size of the test set. We only include the results of SCAD ($a=3.7$) regression in Table \ref{tab5}, and the results of MCP  regression are included in  Appendix \ref{C.4}.

\begin{table}[H]
\caption{\footnotesize{Analysis of the news popularity data under the SCAD penalty with $\tau=0.5$.}}\label{tab5}
\resizebox{1\columnwidth}{!}{
\begin{tabular}{lccccccc}
\toprule[1.5pt]
Algorithm                                    & M   & $x_{14}$      & $x_{27}$     & Nonzero     & PE         & Ite         & Time \\
\midrule[1pt]
\multirow{2}*{QRADM}                         & 10  & -0.021(0.002) & 0.054(0.002) & 34.53(1.47) & 0.22(0.02) & 223.6(37.8) & 20.1(2.32) \\
                                             & 100 & -0.020(0.002) & 0.056(0.001) & 34.82(1.25) & 0.20(0.02) & 294.8(50.3) & 4.5(0.67)  \\
\multirow{2}*{QRADMslack}                    & 10  & -0.019(0.001) & 0.055(0.001) & 33.92(1.59) & 0.20(0.03) & 112.9(11.2) & 9.8(1.03)  \\
                                             & 100 & -0.018(0.001) & 0.057(0.001) & 34.07(1.41) & 0.20(0.02) & 135.4(15.7) & 2.1(0.31)  \\
\multirow{2}*{$\text{LADMM}_{\rho}$}              & 10  & -0.010(0.001) & 0.021(0.001) & 10.34(0.56) & \bf 0.18(0.02) & 81.2(8.4)   & 3.2(0.21)  \\
                                             & 100 & -0.011(0.001) & 0.022(0.001) & 10.35(0.56) & \bf 0.18(0.02) & 83.6(8.7)   & 2.2(0.11)  \\
\multirow{2}*{$\text{LADMM}_{\mathcal L_{\tau,c},\delta=1}$} & 10  & -0.012(0.001) & 0.023(0.001) & 12.67(1.35) & 0.19(0.01) &\bf 65.9(9.6)   &\bf  2.9(0.13)  \\
                                             & 100 & -0.012(0.001) & 0.023(0.001) & 12.65(1.38) & 0.19(0.01) & \bf 68.3(9.7)   & \bf 2.1(0.05)  \\
\multirow{2}*{$\text{LADMM}_{\mathcal L_{\tau,\kappa},\delta=1}$} & 10  & -0.011(0.001) & 0.024(0.001) & 13.11(1.45) & 0.20(0.01) & 72.6(10.1)  & 3.0(0.21)  \\
                                             & 100 & -0.011(0.001) & 0.024(0.001) & 13.11(1.45) & 0.20(0.01) & 78.6(10.3)  & 2.1(0.08)  \\
\toprule[1.5pt]
\end{tabular}}
\end{table}

Table  \ref{tab5} demonstrates that  the LADMM algorithm exhibits competitive performance in terms of both time and accuracy compared to the QPADM and QPADMslack algorithms. Additionally, LADMM clearly demonstrates superior variable selection capability, and its iterative convergence is not affected by the value of $M$.
It is worth noting that, in addition to the important variables $x_{14}$ and $x_{27}$ identified in the previous study, our analysis reveals that variables $x_5$ (number of stopwords) and $x_{18}$ (a binary variable that represents whether the data channel is the word ``world" or not) are also significant. In particular, variable $x_5$ exhibits a notable importance. We visualize this result in Figure \ref{fig5}. Our newly discovered features are interpretable in real life. Excessive stopwords can make news verbose, trivial and lack coherence, making it difficult to convey clear ideas and information. On the other hand, the use of ``world" as data channels for news may result in a relatively smaller audience and, consequently, lower sharing volume, as the focus of news coverage might differ from the interests of the general public.
\begin{figure}[htbp]
\centering
\includegraphics[width=17cm]{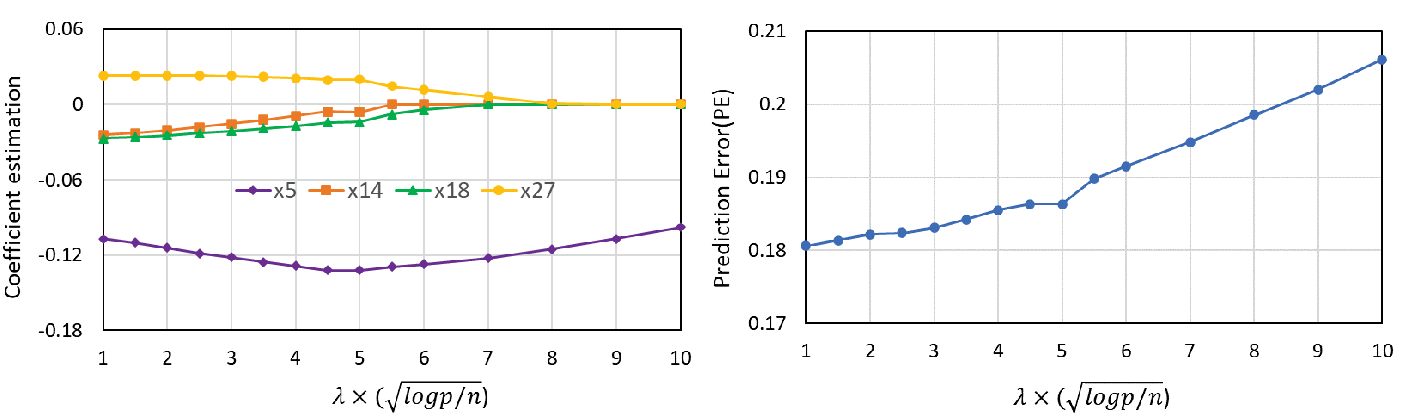}
\caption{\footnotesize{Variation of significance coefficient and  prediction error (PE) with $\lambda$}}\label{fig5}
\end{figure}

\section{Conclusion and Discussion}\label{sec6}
This paper introduces a parallel LADMM algorithm designed to efficiently solve nonconvex penalized smooth quantile regression problems. Our algorithm is easy to implement, and it offers great flexibility that allows for its extension to many other nonconvex penalized regression models, such as quantile regressions, least squares regressions, and Huber regressions. These extensions have also been implemented in our R package \textbf{PIPADMM}. At present, the parallel algorithm implementation in our R package is akin to the pseudo-parallelism of the R packages \textbf{QRADMM} and \textbf{QPADMslack}. One advantage of this pseudo-parallelism is that it is very convenient for readers to verify algorithms and reproduce code on a computer. Readers can also utilize our designed algorithm framework for processing distributed stored data in Spark. Notably, compared to existing parallel algorithms that rely on consensus-based approaches for solving regression models,  our solution companion changes very little with different sample partitioning strategies and requires fewer iterations to achieve convergence. Furthermore, for smooth loss functions, we prove the global convergence of our LADMM algorithm, meaning that the iterative solutions converge to a critical point of the Lagrangian function.

The parallel algorithm design process outlined in Section \ref{sec32}  indicates that other linearized ADMM algorithms, such as those presented in \cite{Li2014Linearized}, \cite{Gu2018ADMM}, and \cite{Liang2024Linearized}, can be readily adapted to parallel computation using our strategy.
Although this paper only deals with regression problems with simple combined regularization like Elastic-net, Snet, Mnet, and Cnet, our algorithm also has the potential to handle more complex combinatorial regularization terms. Examples include sparse group lasso in \cite{Wu2008Coordinate} and \cite{Juan2019An}, as well as sparse fused lasso in \cite{Tibshirani2005Sparsity} and \cite{Wu2024Multi}.

There is still much work to be done, such as studying the convergence of parallel ADMM algorithms for solving NPQR, which has remained an unresolved open problem in \cite{Yu2017A}. This paper only employs NPSQR to approximate NPQR, which appears to solve the convergence issue. However, this study demonstrated that the global convergence claimed by Theorem \ref{TH2} cannot be guaranteed for arbitrarily small $c$ and $\kappa$. Furthermore, our algorithm not only efficiently handles regularized regression problems, but also easily extends to regularized classification problems with smoothing losses, such as Huber loss SVM in \cite{Wang2008Hybrid} and least squares loss SVM in \cite{Huang2014Asymmetric}.

\section*{Acknowledgements}
We are grateful to Professor Karim Oualkacha from the Department of Mathematics at the Université du Québec à Montréal for providing us with the code for the R package cdaSQR. Specifically, we are very grateful to Professor Bingsheng He for his valuable discussions with us, which greatly helped us prove the convergence of the algorithm. The research was partially supported by the National Natural Science Foundation of China [grant numbers 11871121, 12171405, 12271066] and the project of science and technology research program of Chongqing Education Commission of China [Grant Numbers KJQN202302003].

\bibliographystyle{apalike}
\bibliography{myrefq}

\begin{thebibliography}{}

\bibitem[Aravkin et~al., 2014]{Aravkin2014Sparse}
Aravkin, A.~Y., Kambadur, A., Lozano, A.~C., and Luss, R. (2014).
\newblock {Sparse Quantile Huber Regression for Efficient and Robust
  Estimation}.
\newblock {\em ArXiv}, 14(1):1--1.

\bibitem[Boyd et~al., 2010]{Boyd2010Distributed}
Boyd, S., Parikh, N., Chu, E., Peleato, B., and Eckstein, J. (2010).
\newblock {Distributed Optimization and Statistical Learning Via the
  Alternating Direction Method of Multipliers}.
\newblock {\em Foundation and Trends in Machine Learning}, 3(1):1--122.

\bibitem[Fan and Li, 2001]{Fan2001Variable}
Fan, J. and Li, R. (2001).
\newblock {Variable Selection via Nonconcave Penalized Likelihood and its
  Oracle Properties}.
\newblock {\em Journal of the American Statistical Association},
  96(456):1348--1360.

\bibitem[Fan et~al., 2018]{Fan2018I-LAMM}
Fan, J., Liu, H., Sun, Q., and Zhang, T. (2018).
\newblock {I-LAMM for Sparse Learning: Simultaneous Control of Algorithmic
  Complexity and Statistical Error}.
\newblock {\em The Annals of Statistics}, 46(2):814--841.

\bibitem[Fan et~al., 2014]{Fan2014Strong}
Fan, J., Xue, L., and Zou, H. (2014).
\newblock {Strong Oracle Optimality of Folded Concave Penalized Estimation}.
\newblock {\em The Annals of Statistics}, 42(3):819--849.

\bibitem[Fan et~al., 2021]{Fan2021Penalized}
Fan, Y., Lin, N., and Yin, X. (2021).
\newblock {Penalized Quantile Regression for Distributed Big Data Using the
  Slack Variable Representation}.
\newblock {\em Journal of Computational and Graphical Statistics},
  30(3):557--565.

\bibitem[Fernandes et~al., 2015]{Fernandes2015A}
Fernandes, K., Vinagre, P., and Cortez, P. (2015).
\newblock {A Proactive Intelligent Decision Support System for Predicting the
  Popularity of Online News}.
\newblock In {\em 17th Portuguese Conference on Artificial Intelligence}, pages
  535--546.

\bibitem[Golub and Loan, 2013]{Golub1996Matrix}
Golub, G.~H. and Loan, C. F.~V. (2013).
\newblock {\em {Matrix Computations - 4th Edition}}.
\newblock Johns Hopkins University Press.

\bibitem[Gong et~al., 2013]{Gong2013A}
Gong, P., Zhang, C., Lu, Z., Huang, J., and Ye, J. (2013).
\newblock {A General Iterative Shrinkage and Thresholding Algorithm for
  Non-convex Regularized Optimization Problems}.
\newblock {\em International Conference on Machine Learning}, 28:37--45.

\bibitem[Gu et~al., 2018]{Gu2018ADMM}
Gu, Y., Fan, J., Kong, L., Ma, S., and Zou, H. (2018).
\newblock {ADMM for High-Dimensional Sparse Penalized Quantile Regression}.
\newblock {\em Technometrics}, 60(3):319--331.

\bibitem[Guan et~al., 2018]{Guan2018An}
Guan, L., Qiao, L., Li, D., Sun, T., Ge, K., and Lu, X. (2018).
\newblock {An Efficient ADMM-Based Algorithm to Nonconvex Penalized Support
  Vector Machines}.
\newblock {\em 2018 IEEE International Conference on Data Mining Workshops},
  pages 1209--1216.

\bibitem[Guo et~al., 2016]{Guo2016Convergence}
Guo, K., Han, D., and Wu, T. (2016).
\newblock {Convergence of Alternating Direction Method for Minimizing Sum of
  Two Nonconvex Functions with Linear Constraints}.
\newblock {\em International Journal of Computer Mathematics}, 94(8):1--18.

\bibitem[He et~al., 2020]{He2020Optimally}
He, B., Ma, F., and Yuan, X. (2020).
\newblock {Optimally Linearizing the Alternating Direction Method of
  Multipliers for Convex Programming}.
\newblock {\em Computational Optimization and Applications}, 75:361--388.

\bibitem[Huang et~al., 2016]{Huang2016THE}
Huang, J., Breheny, P., Ma, S., and Zhang, C. (2016).
\newblock {THE Mnet Method for Variable Selection}.
\newblock {\em Statistica Sinica}, 3:903–923.

\bibitem[Huang et~al., 2014]{Huang2014Asymmetric}
Huang, X., Shi, L., and Suykens, J. A.~K. (2014).
\newblock {Asymmetric Least Squares Support Vector Machine Classifiers}.
\newblock {\em Computational Statistics \& Data Analysis}, 70:395--405.

\bibitem[Huber, 1964]{Huber1964Robust}
Huber, P.~J. (1964).
\newblock {Robust Estimation of a Location Parameter}.
\newblock {\em Annals of Mathematical Statistics}, 35:492--518.

\bibitem[Jennings et~al., 1993]{Jennings1993An}
Jennings, L.~S., Wong, K.~H., and Teo, K.~L. (1993).
\newblock {An Optimal Control Problem in Biomechanics}.
\newblock {\em Ifac Proceedings Volumes}, 26(2):279--282.

\bibitem[Kim and Oh, 2020]{Kim2020Pseudo}
Kim, J. and Oh, H.-S. (2020).
\newblock {Pseudo-quantile Functional Data Clustering}.
\newblock {\em Journal of Multivariate Analysis}, 178:104626.

\bibitem[Koenker and Basset, 1978]{Koenker1978Regressions}
Koenker, R. and Basset, G. (1978).
\newblock {Regressions Quantiles}.
\newblock {\em Econometrica}, 46.

\bibitem[Laria et~al., 2019]{Juan2019An}
Laria, J.~C., Aguilera-Morillo, M.~C., and Lillo, R.~E. (2019).
\newblock {An Iterative Sparse-Group Lasso}.
\newblock {\em Journal of Computational and Graphical Statistics},
  28(3):722--731.

\bibitem[Lee et~al., 2014]{Lee2014Model}
Lee, E.~R., Noh, H., and Park, B.~U. (2014).
\newblock {Model Selection via Bayesian Information Criterion for Quantile
  Regression Models}.
\newblock {\em Journal of the American Statistical Association},
  109(505):216--229.

\bibitem[Li et~al., 2011]{Li2011Nonconcave}
Li, G., Peng, H., and Zhu, L. (2011).
\newblock {Nonconcave Penalized M-estimation with a Diverging Number of
  Parameters}.
\newblock {\em Statistica Sinica}, 21(1):391--419.

\bibitem[Li et~al., 2014]{Li2014Linearized}
Li, X., Mo, L., Yuan, X., and Zhang, J. (2014).
\newblock {Linearized Alternating Direction Method of Multipliers for Sparse
  Group and Fused LASSO Models}.
\newblock {\em Computational Statistics \& Data Analysis}, 79:203--221.

\bibitem[Liang et~al., 2024]{Liang2024Linearized}
Liang, R., Wu, X., and Zhang, Z. (2024).
\newblock {Linearized Alternating Direction Method of Multipliers for
  Elastic-net Support Vector Machines}.
\newblock {\em Pattern Recognition}, 148:110134.

\bibitem[Lin et~al., 2022]{Lin2022Alternating}
Lin, Z., Fang, C., and Li, H. (2022).
\newblock {\em {Alternating Direction Method of Multipliers for Machine
  Learning}}.
\newblock Springer Singapore.

\bibitem[Loh, 2017]{Loh2017STATISTICAL}
Loh, P.-L. (2017).
\newblock {Statistical Consistency and Asymptotic Normality for
  High-dimensional Robust M-estimators}.
\newblock {\em The Annals of Statistics}, 45(2):866--896.

\bibitem[Loh and Wainwright, 2015]{Loh2015Regularized}
Loh, P.-L. and Wainwright, M.~J. (2015).
\newblock {Regularized M-estimators with Nonconvexity: Statistical and
  Algorithmic Theory for Local Optima}.
\newblock {\em The Journal of Machine Learning Research}.

\bibitem[Mkhadri et~al., 2017]{Mkhadri2017A}
Mkhadri, A., Ouhourane, M., and Oualkacha, K. (2017).
\newblock {A Coordinate Descent Algorithm for Computing Penalized Smooth
  Quantile Regression}.
\newblock {\em Statistics and Computing}, 27:865--883.

\bibitem[Negahban et~al., 2010]{Negahban2010A}
Negahban, S.~N., Ravikumar, P., Wainwright, M.~J., and Yu, B. (2010).
\newblock {A Unified Framework for High-Dimensional Analysis of M-Estimators
  with Decomposable Regularizers}.
\newblock {\em Statistical Science}, 27(4):538--557.

\bibitem[Newey and Powell, 1987]{Newey1987Asymmetric}
Newey, W. and Powell, J. (1987).
\newblock {Asymmetric Least Squares Estimation and Testing}.
\newblock {\em Econometrica}, 55:819--847.

\bibitem[Oh et~al., 2011]{Hee2011Fast}
Oh, H.-S., Thomas, C. M.~L., and Douglas, W.~N. (2011).
\newblock {Fast Nonparametric Quantile Regression With Arbitrary Smoothing
  Methods}.
\newblock {\em Journal of Computational and Graphical Statistics},
  20(2):510–526.

\bibitem[Ouhourane et~al., 2022]{Ouhourane2022Group}
Ouhourane, M., Yang, Y., Benedet, A.~L., and Oualkacha, K. (2022).
\newblock {Group Penalized Quantile Regression}.
\newblock {\em Statistical Methods and Applications}, 31(3):495--529.

\bibitem[Pan et~al., 2021]{Pan2021Iteratively}
Pan, X., Sun, Q., and Zhou, W. (2021).
\newblock {Iteratively Reweighted $\ell_1$-penalized Robust Regression}.
\newblock {\em Electronic Journal of Statistics}, 15(1):3287--3348.

\bibitem[Peng and Wang, 2015]{Peng2015An}
Peng, B. and Wang, L. (2015).
\newblock {An Iterative Coordinate Descent Algorithm for High-Dimensional
  Nonconvex Penalized Quantile Regression}.
\newblock {\em Journal of Computational \& Graphical Statistics},
  24(3):676--694.

\bibitem[Sun et~al., 2015]{Sun2015A}
Sun, D., Toh, K.-C., and Yang, L. (2015).
\newblock {A Convergent 3-Block SemiProximal Alternating Direction Method of
  Multipliers for Conic Programming with 4-Type Constraints}.
\newblock {\em SIAM Journal on Optimization}, 25(2):882--915.

\bibitem[Tibshirani et~al., 2005]{Tibshirani2005Sparsity}
Tibshirani, R., Saunders, M., Rosset, S., Zhu, J., and Knight, K. (2005).
\newblock {Sparsity and Smoothness Via the Fused Lasso}.
\newblock {\em Journal of the Royal Statistical Society: Series B (Statistical
  Methodology}, 67(1):91--108.

\bibitem[Wang et~al., 2020]{Wang2020A}
Wang, L., Peng, B., Bradic, J., Li, R., and Wu, Y. (2020).
\newblock {A Tuning-free Robust and Efficient Approach to High-dimensional
  Regression}.
\newblock {\em Journal of the American Statistical Association},
  115(532):1--44.

\bibitem[Wang et~al., 2012]{Wang2012Quantile}
Wang, L., Wu, Y., and Li, R. (2012).
\newblock {Quantile Regression for Analyzing Heterogeneity in Ultra-High
  Dimension}.
\newblock {\em Journal of the American Statistical Association}, 107:214--222.

\bibitem[Wang et~al., 2008]{Wang2008Hybrid}
Wang, L., Zhu, J., and Zou, H. (2008).
\newblock {Hybrid Huberized Support Vector Machines for Microarray
  Classification and Gene Selection}.
\newblock {\em Bioinformatics}, 24(3):412--419.

\bibitem[Wang et~al., 2010]{Wang2010Variable}
Wang, X., Park, T., and Carriere, K.~C. (2010).
\newblock {Variable Selection via Combined Penalization for High-dimensional
  Data Analysis}.
\newblock {\em Computational Statistics \& Data Analysis}, 54(10):2230--2243.

\bibitem[Wang and Yuan, 2012]{Wang2012The}
Wang, X. and Yuan, X. (2012).
\newblock {The Linearized Alternating Direction Method of Multipliers for
  Dantzig Selector}.
\newblock {\em Siam Journal on Scientific Computing}, 34(5):2792--2811.

\bibitem[Wen et~al., 2023]{Wen2023Feature}
Wen, J., Yang, S., Wang, C., Jiang, Y., and Li, R. (2023).
\newblock {Feature-splitting Algorithms for Ultrahigh Dimensional Quantile
  Regression}.
\newblock {\em Journal of Econometrics}, page 105426.

\bibitem[Wu and Lange, 2008]{Wu2008Coordinate}
Wu, T. and Lange, K. (2008).
\newblock {Coordinate Descent Algorithms for Lasso Penalized Regression}.

\bibitem[Wu et~al., 2024]{Wu2024Multi}
Wu, X., Ming, H., Zhang, Z., and Cui, Z. (2024).
\newblock {Multi-block Alternating Direction Method of Multipliers for
  Ultrahigh Dimensional Quantile Fused Regression}.
\newblock {\em Computational Statistics \& Data Analysis}, 192:107901.

\bibitem[Yu and Lin, 2017]{Yu2017ADMM}
Yu, L. and Lin, N. (2017).
\newblock {ADMM for Penalized Quantile Regression in Big Data}.
\newblock {\em International Statistical Review}, 85:494--518.

\bibitem[Yu et~al., 2017]{Yu2017A}
Yu, L., Lin, N., and Wang, L. (2017).
\newblock {A Parallel Algorithm for Large-Scale Nonconvex Penalized Quantile
  Regression}.
\newblock {\em Journal of Computational and Graphical Statistics},
  26(4):935--939.

\bibitem[Zhang, 2010a]{Zhang2010Nearly}
Zhang, C. (2010a).
\newblock {Nearly Unbiased Variable Selection Under Minimax Concave Penalty}.
\newblock {\em Annals of Statistics}, 38(2):894--942.

\bibitem[Zhang, 2010b]{Zhang2010Analysis}
Zhang, T. (2010b).
\newblock {Analysis of Multi-stage Convex Relaxation for Sparse
  Regularization}.
\newblock {\em Journal of Machine Learning Research}, 11(35):1081--1107.

\bibitem[Zheng, 2011]{Zheng2011Gradient}
Zheng, S. (2011).
\newblock {Gradient Descent Algorithms for Quantile Regression with Smooth
  Approximation}.
\newblock {\em International Journal of Machine Learning \& Cybernetics},
  2:191--207.

\bibitem[Zhou et~al., 2018]{Zhou2018A}
Zhou, W.-X., Bose, K., Fan, J., and Liu, H. (2018).
\newblock {A New Perspective on Robust M-Estimation: Finite Sample Theory and
  Applications to Dependence-Adjusted Multiple Testing}.
\newblock {\em The Annals of Statistics}, 46(5):1904--1931.

\bibitem[Zou and Hastie, 2005]{Zou2005Regularization}
Zou, H. and Hastie, T. (2005).
\newblock {Regularization and Variable Selection Via the Elastic Net}.
\newblock {\em Journal of the Royal Statistical Society Series B: Statistical
  Methodology}, 67(2):301--320.

\bibitem[Zou and Li, 2008]{Zou2008One}
Zou, H. and Li, R. (2008).
\newblock {One-step Sparse Estimates in Nonconcave Penalized Likelihood
  Models}.
\newblock {\em The Annals of Statistics}, 36(4):1509--1533.

\end{thebibliography}

\newpage
\section*{Online support materials}
\appendix
In this online support materials, we first provide the derivation of the closed-form solution for the proximal operator in Section \ref{sec23} of the main text. Next, we give the proofs of  Theorem \ref{TH1} and Theorem \ref{TH2}. Finally, we present supplementary numerical experiments.
\section{Proof of the Closed-form Solution of Proximal Operator}\label{A}
In \ref{A1}, we derive closed-form solutions for the proximal operators of two smooth quantile regression methods similar to those in \cite{Liang2024Linearized}. In \ref{A2}, we propose a simple and scalable approach for deriving closed-form solutions of the proximal operator for regularized terms.
\subsection{Loss Fuction}\label{A1}
Recalling that the loss function ${\mathcal{L}_{\tau,*}}(\bm u)= \sum_{i=1}^{n}{\mathcal{L}_{\tau,*}}(u_i)$, we can divide the optimization problem
\begin{align}\label{2e}
\mathop{\arg \min}\limits_{\bm u} \left\{{\mathcal{L}_{\tau,*}}(\bm u) + \frac{\mu}{2}\left\|\bm u - \bm v\right\|_2^2\right \}
\end{align}
 into $n$ independent univariate problems, which can be expressed as $\mathop{ \min}\limits_{u_i} \left\{  {\mathcal{L}_{\tau,*}}(u_i) + \frac{\mu}{2}\left(u_i - v_i\right)_2^2\right \} , i=1,2,\dots,n$.  We denote the  minimum solution of  (\ref{2e}) by $\hat{\bm u}$.

\textbullet  \quad $\mathcal{L}_{\tau,c}(u_i)$: Let us examine the expression for $\mathcal{L}_{\tau,c}(u_i)$:
\begin{align}
 \begin{cases} 
\tau (u_i - 0.5c) & \text{if } u_i \geq c, \\
\frac{\tau u_i^2}{2c} & \text{if } 0 \leq u_i < c, \\
\frac{(1- \tau) u_i^2}{2c} & \text{if } -c \leq u_i < 0, \\
(\tau -1)(u_i +0.5c) & \text{if } u_i < -c.
\end{cases}
\end{align}
Throughout the entire domain of the above function, it is either a linear or quadratic function. Therefore, it has a closed-form solution in each small domain. After some algebra, we have 
\begin{align}
\hat{ u}_j  = \begin{cases}
 v_j -  \frac{\tau}{\mu}, & \text{{if }}  v_j \geq c + \frac{\tau}{\mu}, \\
\frac{c\mu v_j }{c\mu + \tau}, & \text{{if }} 0 \le  v_j < c + \frac{\tau}{\mu}, \\
\frac{c\mu v_j }{c\mu + 1 - \tau}, & \text{{if }}  - c + \frac{\tau - 1}{\mu} \le  v_j <  0, \\
 v_j -  \frac{\tau -1}{\mu}, & \text{{if }}  v_j <  - c + \frac{\tau - 1}{\mu}. \\
\end{cases}
\end{align}

\textbullet  \quad $\mathcal{L}_{\tau,\kappa}(u_i)$: Let us examine the expression for $\mathcal{L}_{\tau,\kappa}(u_i)$:
\begin{align}
 \begin{cases} 
\tau (u_i - \frac{\tau\kappa}{2} )& \text{if } u_i > \tau\kappa, \\
\frac{u_i^2}{2\kappa} & \text{if}  \  u_i \in [(\tau-1)\kappa, \tau\kappa] , \\
(\tau - 1) [u_i - \frac{(\tau-1)\kappa}{2}]& \text{if}  \  u_i  <  (\tau-1)\kappa.
\end{cases}
\end{align}
Similar to  $\mathcal{L}_{\tau,c}(u_i)$,  we can derive the following  closed-form solution of $\mathcal{L}_{\tau,\kappa}(u_i)$, 
\begin{align}
\hat{u}_j   = \begin{cases}
 v_j -  \frac{\tau}{\mu}, & \text{{if }}  v_j \geq \tau \kappa + \frac{\tau}{\mu}, \\
\frac{c\mu v_j }{c\mu + \tau}, & \text{{if }}  (\tau -1) \kappa + \frac{\tau - 1}{\mu} \le v_j < \tau \kappa + \frac{\tau}{\mu}, \\
 v_j -  \frac{\tau -1}{\mu}, & \text{{if }} v_j <   (\tau -1) \kappa + \frac{\tau - 1}{\mu} \\
\end{cases}
\end{align}

\subsection{Nonconvex Penalty}\label{A2}
Recalling that the nonconvex penalty $P_\lambda (|\bm u|) = \sum_{j}^{p}P_\lambda(|u_j|)$, we can divide the optimization problem $$\mathop{\arg \min}\limits_{\bm u} \left\{ P_\lambda(|\bm u|) + \frac{\eta}{2}\left\|\bm u - \bm v\right\|_2^2\right \}$$ into $p$ independent univariate problems, which can be expressed as $\mathop{ \min}\limits_{u_j} \left\{ P_\lambda(|u_j|) + \frac{\mu}{2}\left(u_j - v_j\right)_2^2\right \} , j=1,2,\dots,p$. 
Let 
\begin{align}\label{A21}
\hat{u}_j = \mathop{\arg \min}\limits_{u_j} \left\{ P_\lambda(|u_j|) +  \frac{\eta}{2}\left(u_j - v_j\right)_2^2\right \}.
\end{align}
Because $ P_\lambda(|u_j|)$ is a function of the absolute value of $u_j$, and the subsequent term is a quadratic function ($v_j$ is a constant), we have $\text{sign}(\hat u_j) = \text{sign}(v_j)$ and $\text{sign}(v_j) \hat u_j \ge 0$. Let 
\begin{align}\label{A22}
\tilde{u}_j = \text{sign}(v_j) \hat u_j,
\end{align}
 then we can convert (\ref{A21}) into the following optimization formula
\begin{align}\label{s1}
\tilde{u}_j = 
 \mathop{\arg \min}\limits_{u_j \ge 0} \left\{ P_\lambda(u_j) +\frac{\eta}{2}\left(u_j -|v_j| \right)_2^2\right \}.
\end{align}
Make the first derivative of the optimization formula above equal to $0$, resulting in
 \begin{align}\label{ke}
\tilde{u}_j   = |v_j| - \nabla P_\lambda(\tilde{u}_j )/{\eta}  \ \text{and} \ \tilde{u}_j \ge 0. 
\end{align}

By (\ref{A22}), we have
\begin{align}\label{ke2}
 \hat{u}_j  =  \text{sign}(v_j)\tilde{u}_j.   
\end{align}

The above two equations play \textbf{a crucial role} in deriving the closed-form solutions for the proximal operators of the (nonconvex) penalty term. Clearly,  $\hat{u}_j$ in (\ref{ke2}) is the solution of the  proximal operator.

$\bullet$ For Snet, we have \begin{align}\label{ssnet}
\nabla P_\lambda(\tilde{u}_j ) = \begin{cases}
\lambda_1 + \lambda_2 \tilde{u}_j , & \text{{if }} \tilde{u}_j  \leq \lambda_1, \\
\frac{a\lambda_1 -  \tilde{u}_j}{a-1} + \lambda_2 \tilde{u}_j , & \text{{if }} \lambda_1 < \tilde{u}_j  < a  \lambda_1, \\
\lambda_2 \tilde{u}_j , & \text{{if }} \tilde{u}_j  \ge a  \lambda_1. \\
\end{cases}
\end{align}
By substituting (\ref{ssnet}) into the equation (\ref{ke}), we can discuss the solutions of the equation  (\ref{ke}) in different regions.
For $ \tilde{u}_j  \leq \lambda_1$,  the solution of equation (\ref{ke})  is  $ \left[ \frac{\eta| v_j| - \lambda_1}{\eta + \lambda_2} \right]_+$. For $\lambda_1 < \tilde{u}_j  < a  \lambda_1$,  the solution of equation (\ref{ke})  is  $\left[\frac{(a-1)\eta| v_j| - a \lambda_1}{(a-1) (\eta + \lambda_2) - 1}\right]_+$. For $\tilde{u}_j  \ge a \lambda_1$,   the solution of equation (\ref{ke})  is $\frac{\eta |v_j|}{\eta + \lambda_2}$. Note that  $ \hat{u}_j = \text{sign}(v_j) \tilde{u}_j $ and $(a-1)(\eta + \lambda_2) > 1$, it follows that
\begin{align}
\hat{u}_j   = \begin{cases}
\text{{sign}}(v_j) \cdot \left[ \frac{\eta| v_j| - \lambda_1}{\eta + \lambda_2} \right]_+, & \text{{if }} | v_j| \leq \frac{\lambda_1(1+\eta+\lambda_2)}{\eta}, \\
\text{{sign}}( v_j) \cdot \left[ \frac{(a-1)\eta| v_j| - a \lambda_1}{(a-1) (\eta + \lambda_2) - 1} \right]_+, & \text{{if }} \frac{\lambda_1(1+\eta+\lambda_2)}{\eta} < | v_j| < \frac{a \lambda_1(\eta+\lambda_2)}{\eta}, \\
\frac{\eta v_j}{\eta + \lambda_2}, & \text{{if }} | v_j| \geq  \frac{a \lambda_1(\eta+\lambda_2)}{\eta}. \\
\end{cases}
\end{align}

$\bullet$ For Mnet, we have \begin{align}\label{mmnet}
\nabla P_\lambda(\tilde{u}_j )  =
\begin{cases}
\lambda_1  - \frac{\tilde{u}_j }{a}  + \lambda_2 \tilde{u}_j, & \text{{if }}  \tilde{u}_j  \le a \lambda_1,\\
\lambda_2 \tilde{u}_j, & \text{{if }}  \tilde{u}_j  > a \lambda_1. \\
\end{cases}
\end{align}
By substituting (\ref{mmnet}) into the equation (\ref{ke}), we can discuss the solutions of the equation   (\ref{ke}) in different regions.
For $\tilde{u}_j  \le a \lambda_1$,  the solution of equation (\ref{ke})  is  $ [\frac{a \eta| v_j| - a \lambda_1}{a(\eta + \lambda_2) -1}]_+$.  For $\tilde{u}_j  > a \lambda_1$, the solution of equation (\ref{ke})  is  $\frac{\eta | v_j|}{\eta + \lambda_2}$.
Together with  $ \hat{u}_j = \text{sign}(v_j) \tilde{u}_j $,   we obtain
\begin{align}
\hat{u}_j  = \begin{cases}
\text{{sign}}( v_j) \cdot \left[ \frac{a \eta| v_j| - a \lambda_1}{a(\eta + \lambda_2) -1} \right]_+, & \text{{if }} | v_j| <  \frac{a \lambda_1(\eta+\lambda_2)}{\eta}, \\
\frac{\eta  v_j}{\eta + \lambda_2}, & \text{{if }} | v_j| \geq  \frac{a \lambda_1(\eta+\lambda_2)}{\eta}. \\
\end{cases}
\end{align}

$\bullet$ For Cnet, we have \begin{align}\label{ccnet}
\nabla P_\lambda(\tilde{u}_j )  =
\begin{cases}
\lambda_1   + \lambda_2 \tilde{u}_j, & \text{{if }}  \tilde{u}_j  \le a,\\
\lambda_2 \tilde{u}_j, & \text{{if }}  \tilde{u}_j  > a. \\
\end{cases}
\end{align}
By substituting (\ref{ccnet}) into the equation (\ref{ke}), we can discuss the solutions of the equation  (\ref{ke})  in different regions.
For $\tilde{u}_j  \le a$,  the solution of equation (\ref{ke})  is  $[\frac{ \eta| v_j| -  \lambda_1}{\eta + \lambda_2}]_+$. For $\tilde{u}_j  > a \lambda_1$, the solution of equation (\ref{ke})  is  $\frac{\eta  |v_j|}{\eta + \lambda_2}$. Together with  $ \hat{u}_j = \text{sign}(v_j) \tilde{u}_j $,  we have
\begin{align}
 \hat{u}_j = \begin{cases}
\text{{sign}}( v_j) \cdot \left[ \frac{ \eta| v_j| -  \lambda_1}{\eta + \lambda_2} \right]_+, & \text{{if }} | v_j| <  \frac{a (\eta+\lambda_2)}{\eta}, \\
\frac{\eta v_j}{\eta + \lambda_2}, & \text{{if }} |v_j| \geq  \frac{a (\eta+\lambda_2)}{\eta}. \\
\end{cases}
\end{align}

\section{Proof of Theorem \ref{TH1} and \ref{TH2}}\label{B}
\subsection{Proof of Theorem \ref{TH1}}\label{B1}
Looking back at (\ref{Xy}) and (\ref{rb}), we have
\begin{align*}
{ \left[ \mu \bm X^\top(\bm{X\beta}^k + \bm{r}^k - \bm y - \bm d^k/\mu)\right]} = \left[ \mu \sum_{m=1}^{M} \bm X_m^\top(\bm{X}_m \bm \beta^k + \bm{r}_m^k - \bm y_m - \bm d_m^k/\mu) \right].
\end{align*}
If the two algorithms have the same $k$-th iteration solution $\left\{ {\bm \beta}^k, {\bm r}^k, {\bm d}^k\right\}$, then we can derive the following equation from (\ref{uadmm}) and (\ref{Mupdate}),
\begin{align}\label{bb}
\hat{\bm \beta}^{k+1} =  \tilde{\bm \beta}^{k+1}, k=0,1,2,3,\dots.
\end{align}
Substituting the above equation into (\ref{twoupladmm1}) and (\ref{twoupladmm2}), along with (\ref{rb}), we can observe that $\hat{\bm r}^{k+1} = \tilde{\bm r}^{k+1}$ and $\hat{\bm d}^{k+1} = \tilde{\bm d}^{k+1}$. Therefore, we are able to conclude the result of Theorem \ref{TH1}.

\subsection{Proof of Theorem \ref{TH2}}\label{B2}
The proof of Theorem  \ref{TH2} mainly follows the proof of algorithm convergence presented in  \cite{Guo2016Convergence}. However, there are two key differences in our proof. One is the inclusion of an additional quadratic linearization term in our algorithm.  The other is our method does not rely on assumptions about the boundedness of iterative solutions, which is necessary for the convergence proof in \cite{Guo2016Convergence}.
Our proof can be divided into the following four steps,
\begin{itemize}\small
    \item  First, we prove that the augmented Lagrangian function that requires alternating iteration minimization is monotonically nonincreasing.

    \item  Second, we demonstrate that the iterative solutions of our algorithm remain bounded throughout the entire iteration process.

\item Third, we demonstrate that the iterative solutions of LADMM satisfy $\lim_{k \rightarrow +\infty} \|\mathbf{w}^k - \mathbf{w}^{k+1} \|_2 = 0$. 
 
\item Finally, we prove that the augmented Lagrangian function of all limit points of our LADMM algorithm is a constant.
\end{itemize}
Upon completing these four steps, following the approach outlined in Theorem 3.1 of \cite{Guo2016Convergence}, we can conclude that the iterative sequence produced by our algorithm is a Cauchy sequence.

Now, we start our proof. Because Theorem \ref{TH1} has been proven to take the same $\eta$, the solutions of Algorithm \ref{alg1} and Algorithm \ref{alg2} are exactly the same. Therefore, we only need to discuss the nonparallel version of the LADMM algorithm. In terms of constrained optimization, we have 
\begin{align}\label{pb1}
\min_{\bm \beta, \bm r} & \quad  \sum_{i=1}^{n} \mathcal{L}_{\tau,*}(r_i) + {P}_\lambda(|\bm \beta|), \notag \\
\text{s.t.} & \quad \bm X \bm \beta + \bm r = \bm y. 
\end{align}
In \cite{Guo2016Convergence} , they assumed that $\mathcal{L}_{\tau,*}$ is a continuously differentiable function with Lipschitz continuous gradient $\nabla \mathcal{L}_{\tau,*}$. Here, we employ the following lemma to ensure that $\mathcal{L}_{\tau,c}$ and $\mathcal{L}_{\tau,\kappa}$ possess this property.
\begin{lem}\label{lem1}(Proposition 2 in \cite{Mkhadri2017A})
The smooth quantile $\mathcal{L}_{\tau,c}$ and $\mathcal{L}_{\tau,\kappa}$ are differentiable
and have Lipschitz continuous first derivatives, that is,
\begin{align*}
 \|\nabla \mathcal{L}_{\tau,c}(u_1) - \nabla \mathcal{L}_{\tau,c}(u_2)\|_2 & \le \frac{\max \{\tau,1-\tau \}}{c}\|u_1 - u_2  \|_2, \\
  \|\nabla \mathcal{L}_{\tau,\kappa}(u_1) - \nabla \mathcal{L}_{\tau,\kappa}(u_2)\|_2 & \le \frac{1}{\kappa}\|u_1 - u_2  \|_2 ,
\end{align*}
where $u_1$ and $u_2$ are two arbitrary real numbers.
\end{lem}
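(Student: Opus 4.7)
The plan is to prove each inequality directly by computing $\nabla \mathcal{L}_{\tau,c}$ and $\nabla \mathcal{L}_{\tau,\kappa}$ explicitly, observing that each derivative is a continuous, piecewise affine (and, outside a bounded band, constant) function of a single real variable, and then showing that the worst slope among the pieces is precisely the claimed Lipschitz constant. So the proof really reduces to (i) verifying continuity at the breakpoints of the derivative and (ii) bounding the slope piecewise.

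First I would differentiate $\mathcal{L}_{\tau,c}$ to obtain
\begin{equation*}
\nabla \mathcal{L}_{\tau,c}(u)=
\begin{cases}
\tau, & u\ge c,\\
\tau u/c, & 0\le u<c,\\
(1-\tau)u/c, & -c\le u<0,\\
\tau-1, & u<-c,
\end{cases}
\end{equation*}
and check continuity at the three breakpoints $u=c$, $u=0$, $u=-c$ by matching one-sided limits (all three match, e.g.\ at $u=-c$ one gets $\tau-1$ on both sides). The slope of each piece is $0$, $\tau/c$, $(1-\tau)/c$, $0$, all bounded by $\max\{\tau,1-\tau\}/c$. Similarly, for $\mathcal{L}_{\tau,\kappa}$ the derivative is $\tau$ on $u>\tau\kappa$, $u/\kappa$ on $[(\tau-1)\kappa,\tau\kappa]$, and $\tau-1$ on $u<(\tau-1)\kappa$, which again glue continuously and have per-piece slopes $0$, $1/\kappa$, $0$.

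The remaining step is to upgrade these piecewise bounds to a \emph{global} Lipschitz bound. Given $u_1<u_2$, let $t_0=u_1<t_1<\cdots<t_N=u_2$ denote the ordered sequence obtained by inserting whichever of the breakpoints $\{-c,0,c\}$ (resp.\ $\{(\tau-1)\kappa,\tau\kappa\}$) lie strictly inside $(u_1,u_2)$. On each subinterval $[t_{i-1},t_i]$ the derivative is affine with slope at most $L$ (where $L$ is the claimed constant), so $|\nabla\mathcal{L}(t_i)-\nabla\mathcal{L}(t_{i-1})|\le L(t_i-t_{i-1})$; continuity of $\nabla\mathcal{L}$ at the inserted breakpoints then lets me sum telescopically to get $|\nabla\mathcal{L}(u_2)-\nabla\mathcal{L}(u_1)|\le L(u_2-u_1)=L|u_2-u_1|$, which is the claim.

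The only mild obstacle is the bookkeeping in the concatenation step: one must be careful to use continuity of $\nabla\mathcal{L}$ at every breakpoint (so that the telescoping sum has no jumps) rather than just the per-piece Lipschitz bound. Once continuity across breakpoints is established in the first step, the rest is an application of the triangle inequality. An equivalent one-line version is to note that $\nabla\mathcal{L}$ is absolutely continuous with $|\nabla^{2}\mathcal{L}|\le L$ almost everywhere, so by the fundamental theorem of calculus $|\nabla\mathcal{L}(u_2)-\nabla\mathcal{L}(u_1)|\le\int_{u_1}^{u_2}|\nabla^{2}\mathcal{L}(t)|\,dt\le L|u_2-u_1|$; I would include this as a remark but prefer the piecewise argument above since it avoids invoking measure-theoretic machinery for a finite-piece affine function.
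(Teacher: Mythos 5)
Your proof is correct. Note that the paper does not actually prove this lemma: it is imported verbatim as Proposition~2 of the cited reference (Mkhadri et al.), so there is no in-paper argument to compare against. Your direct verification --- computing the piecewise derivative, checking that the one-sided limits of $\nabla\mathcal{L}_{\tau,*}$ agree at each breakpoint, bounding the slope on each affine piece by $\max\{\tau,1-\tau\}/c$ (resp.\ $1/\kappa$), and then telescoping across the breakpoints using continuity --- is exactly the standard way to establish the result, and every step checks out (in particular the matching values $\tau$, $0$, $\tau-1$ at $u=c,0,-c$ and $\tau$, $\tau-1$ at $u=\tau\kappa,(\tau-1)\kappa$ are right). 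The only cosmetic remark is that since $u_1,u_2$ are scalars the norms in the statement are just absolute values, which your argument already treats correctly.
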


Problem (\ref{pb1}) has the following augmented Lagrangian,
\begin{equation}\label{ALf}
{L_\mu }(\bm \beta, \bm r, \bm d) = \mathcal{L}_{\tau,*}(\bm r) +  {P}_\lambda(|\bm \beta|) - {\bm d^\top}(\bm{X\beta} + \bm{r} - \bm y) + \frac{\mu }{2}\left\|\bm{X\beta} + \bm{r} - \bm y \right\|_2^2,
\end{equation} 
and the iterative steps of LADMM are as follows,
\begin{equation}\label{twoupladmmf}
\left\{ \begin{array}{l}
\bm \beta^{k+1} \ =\mathop {\arg \min }\limits_{\bm \beta} \left\{  {P}_\lambda(|\bm \beta|) + \frac{\mu }{2}\left\|\bm{X\beta} + \bm{r}^k - \bm y - \bm d^k/\mu \right\|_2^2 +   \frac{1}{2}\left \| \bm \beta - \bm \beta^k \right \|_{\bm S}^2    \right\},\\
\bm r^{k+1} \ =\mathop {\arg \min }\limits_{\bm r} \left\{ \mathcal{L}_{\tau,*}(\bm r) +  \frac{\mu }{2}\left\|\bm{X}\bm{\beta}^{k+1} + \bm{r} - \bm y - \bm d^k/\mu \right\|_2^2 \right\},\\
\bm{d}^{k+1}=\bm{d}^{k}-\mu(\bm{X}\bm{\beta}^{k+1} + \bm{r}^{k+1} - \bm y).
\end{array} \right.
\end{equation}
We first demonstrate that the augmented Lagrangian function $ L_\mu(\bm \beta^k, \bm r^k, \bm d^k)$ decreases as the number of iterations increases.

\begin{lem}\label{lem2}
Let the sequence $\boldsymbol{w}^{k}=\{\boldsymbol{\beta}^k, \boldsymbol{r}^k, \boldsymbol{d}^k\}$ be generated by the LADMM, then
\begin{align}\label{key}
L_\mu(\bm w^{k+1})  \le  L_\mu(\bm w^{k}) + \left(\frac{n}{\mu  \min\{c^2,\kappa^2 \}} - \frac{\mu}{2}\right)\|\bm r^k - \bm r^{k+1} \|_2^2.
\end{align}
Together with $\mu >  \frac{\sqrt{2n}}{\min\{c,\kappa \}}$,  $\{L_\mu(\bm w^k) \}_{k=1}^{\infty}$ is monotonically nonincreasing.
\end{lem}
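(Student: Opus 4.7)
The plan is to split the one-step change $L_\mu(\bm w^{k+1}) - L_\mu(\bm w^{k})$ into three consecutive differences, one for each block update, and to bound each contribution separately:
\begin{align*}
L_\mu(\bm w^{k+1}) - L_\mu(\bm w^{k}) &= \bigl[L_\mu(\bm \beta^{k+1}, \bm r^{k}, \bm d^{k}) - L_\mu(\bm \beta^{k}, \bm r^{k}, \bm d^{k})\bigr] \\
&\quad + \bigl[L_\mu(\bm \beta^{k+1}, \bm r^{k+1}, \bm d^{k}) - L_\mu(\bm \beta^{k+1}, \bm r^{k}, \bm d^{k})\bigr] \\
&\quad + \bigl[L_\mu(\bm \beta^{k+1}, \bm r^{k+1}, \bm d^{k+1}) - L_\mu(\bm \beta^{k+1}, \bm r^{k+1}, \bm d^{k})\bigr].
\end{align*}
The first two brackets will be shown to be nonpositive while the third is positive; the goal is to absorb that positive piece into the $\bm r$-descent.

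For the $\bm \beta$-bracket, since $\bm \beta^{k+1}$ minimizes $L_\mu(\cdot, \bm r^{k}, \bm d^{k}) + \tfrac{1}{2}\|\cdot - \bm \beta^{k}\|_{\bm S}^{2}$ and $\bm S \succ \bm 0$ under the choice $\eta > \text{eigen}(\mu \bm X^\top \bm X)$, evaluating this inequality at the candidate $\bm \beta = \bm \beta^{k}$ yields $L_\mu(\bm \beta^{k+1}, \bm r^{k}, \bm d^{k}) - L_\mu(\bm \beta^{k}, \bm r^{k}, \bm d^{k}) \le -\tfrac{1}{2}\|\bm \beta^{k+1} - \bm \beta^{k}\|_{\bm S}^{2} \le 0$; crucially this argument uses only the minimizing property of $\bm \beta^{k+1}$ and is unaffected by the nonconvexity of $P_\lambda$. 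For the $\bm r$-bracket, the subproblem objective is the sum of the convex $\mathcal{L}_{\tau,*}$ and a quadratic with Hessian $\mu \bm I_n$, hence $\mu$-strongly convex; the fact that $\bm r^{k+1}$ is its minimizer therefore delivers $L_\mu(\bm \beta^{k+1}, \bm r^{k+1}, \bm d^{k}) - L_\mu(\bm \beta^{k+1}, \bm r^{k}, \bm d^{k}) \le -\tfrac{\mu}{2}\|\bm r^{k+1} - \bm r^{k}\|_{2}^{2}$. For the $\bm d$-bracket, direct computation combined with $\bm d^{k+1} - \bm d^{k} = -\mu(\bm X\bm \beta^{k+1} + \bm r^{k+1} - \bm y)$ gives exactly $\tfrac{1}{\mu}\|\bm d^{k+1} - \bm d^{k}\|_{2}^{2} \ge 0$.

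The main obstacle is controlling this last positive term by the $\bm r$-descent. The first-order optimality condition of the $\bm r$-subproblem reads $\nabla \mathcal{L}_{\tau,*}(\bm r^{k+1}) = \bm d^{k} - \mu(\bm X\bm \beta^{k+1} + \bm r^{k+1} - \bm y) = \bm d^{k+1}$, so $\bm d^{k+1} = \nabla \mathcal{L}_{\tau,*}(\bm r^{k+1})$ holds at every $k \ge 0$ (and the same identity at the previous iteration, or a compatible initialization at $k=0$, gives $\bm d^{k} = \nabla \mathcal{L}_{\tau,*}(\bm r^{k})$). Applying Lemma \ref{lem1} coordinate-wise, together with a Cauchy--Schwarz passage between $\ell_2$ and $\ell_\infty$ across the $n$ residual coordinates to produce the stated constant, I will conclude $\|\bm d^{k+1} - \bm d^{k}\|_{2}^{2} \le \tfrac{n}{\min\{c^{2}, \kappa^{2}\}}\|\bm r^{k+1} - \bm r^{k}\|_{2}^{2}$, and hence $\tfrac{1}{\mu}\|\bm d^{k+1} - \bm d^{k}\|_{2}^{2} \le \tfrac{n}{\mu \min\{c^{2}, \kappa^{2}\}}\|\bm r^{k+1} - \bm r^{k}\|_{2}^{2}$.

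Combining the three pieces and dropping the nonpositive $\bm \beta$-term yields exactly (\ref{key}). The hypothesis $\mu > \sqrt{2n}/\min\{c, \kappa\}$ rearranges to $\tfrac{\mu}{2} > \tfrac{n}{\mu \min\{c^{2}, \kappa^{2}\}}$, making the coefficient multiplying $\|\bm r^{k+1} - \bm r^{k}\|_{2}^{2}$ strictly negative, so $\{L_\mu(\bm w^{k})\}$ is monotonically nonincreasing as claimed.
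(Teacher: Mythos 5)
Your proposal is correct and follows essentially the same route as the paper: the same three-term telescoping decomposition across the $\bm\beta$-, $\bm r$-, and $\bm d$-updates, the same use of the minimizing property of $\bm\beta^{k+1}$ (with the $\bm S$-proximal term) to make the first bracket nonpositive, the same $-\tfrac{\mu}{2}\|\bm r^{k+1}-\bm r^k\|_2^2$ descent for the $\bm r$-block, and the same control of $\tfrac{1}{\mu}\|\bm d^{k+1}-\bm d^k\|_2^2$ via the identity $\bm d^{k+1}=\nabla\mathcal{L}_{\tau,*}(\bm r^{k+1})$ and the Lipschitz bound of Lemma \ref{lem1}, arriving at the same constant $\tfrac{n}{\mu\min\{c^2,\kappa^2\}}$. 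The only cosmetic difference is that you invoke $\mu$-strong convexity of the $\bm r$-subproblem directly where the paper expands the quadratics by hand, and you are slightly more careful than the paper in noting that $\bm d^k=\nabla\mathcal{L}_{\tau,*}(\bm r^k)$ at $k=0$ requires a compatible initialization.
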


\begin{proof}
 The  optimality conditions for the $k+1$-iteration of LADMM is 
\begin{equation}\label{oc}
\left\{ \begin{array}{l}
\bm 0 \ \in \partial{P}_\lambda(|\bm \beta^{k+1}|) - \bm X^\top \bm d^k + \mu \bm X^\top (\bm{X} \bm{\beta}^{k+1} + \bm{r}^k - \bm y) + \bm S(\bm \beta^{k+1} - \bm \beta^k),\\
\bm 0 \ = \nabla \mathcal{L}_{\tau,*}(\bm r^{k+1}) - \bm d^k + \mu  (\bm{X} \bm{\beta}^{k+1} + \bm{r}^{k+1} - \bm y) ,\\
\bm{d}^{k+1}=\bm{d}^{k}-\mu(\bm{X}\bm{\beta}^{k+1} + \bm{r}^{k+1} - \bm y).
\end{array} \right.
\end{equation}
Using the last equality and rearranging terms, we obtain
\begin{equation}\label{oc2}
\left\{ \begin{array}{l}
\bm 0 \ \in \partial{P}_\lambda(|\bm \beta^{k+1}|) - \bm X^\top \bm d^{k+1} + \mu \bm X^\top (\bm{r}^k - \bm r^{k+1}) + \bm S(\bm \beta^{k+1} - \bm \beta^k),\\
\nabla \mathcal{L}_{\tau,*}(\bm r^{k+1})  \ =  \bm d^{k+1}.
\end{array} \right.
\end{equation}
From the definition of the augmented Lagrangian function $L_\mu$, it follows that
\begin{align}\label{oc3}
L_\mu(\boldsymbol{\beta}^{k+1}, \boldsymbol{r}^{k+1}, \boldsymbol{d}^{k+1}) & = L_\mu(\boldsymbol{\beta}^{k+1}, \boldsymbol{r}^{k+1}, \boldsymbol{d}^{k}) + \langle \bm d^k - \bm d^{k+1}, \bm X \bm \beta^{k+1} + \bm r^{k+1} - \bm y \rangle   \notag\\
& =  L_\mu(\boldsymbol{\beta}^{k+1}, \boldsymbol{r}^{k+1}, \boldsymbol{d}^{k})  + \frac{1}{\mu}\|\bm d^k - \bm d^{k+1} \|_2^2,
\end{align}
and 
\begin{align}\label{eq1}
L_\mu(\boldsymbol{\beta}^{k+1}, \boldsymbol{r}^{k}, \boldsymbol{d}^{k}) & - L_\mu(\boldsymbol{\beta}^{k+1}, \boldsymbol{r}^{k+1}, \boldsymbol{d}^{k})  = (\mathcal{L}_{\tau,*}(\bm r^k) - \mathcal{L}_{\tau,*}(\bm r^{k+1}) ) - \langle \bm d^k,  \bm r^{k} - \bm r^{k+1} \rangle \notag\\
&   +  \frac{\mu}{2} \left[ \| \bm X \bm \beta^{k+1} + \bm r^{k} - \bm y \|_2^2 - \| \bm X \bm \beta^{k+1} + \bm r^{k+1} - \bm y \|_2^2 \right].
\end{align}
Since $\mathcal{L}_{\tau,*}$ is a convex function and $\bm{d}^{k+1}=\bm{d}^{k}-\mu(\bm{X}\bm{\beta}^{k+1} + \bm{r}^{k+1} - \bm y)$, we have
\begin{equation}\label{eq2}
\left\{ \begin{array}{l}
\mathcal{L}_{\tau,*}(\bm r^k) - \mathcal{L}_{\tau,*}(\bm r^{k+1}) \ge  \langle \nabla \mathcal{L}_{\tau,*}(\bm r^{k+1}), \bm r^k - \bm r^{k+1} \rangle,\\
\bm X \bm \beta^{k+1} + \bm r^{k} - \bm y = (\bm d^k - \bm d^{k+1})/\mu + (\bm r^k - \bm r^{k+1}) ,\\
\bm X \bm \beta^{k+1} + \bm r^{k+1} - \bm y = (\bm d^k - \bm d^{k+1})/\mu.
\end{array} \right.
\end{equation}
Note that $\nabla \mathcal{L}_{\tau,*}(\bm r^{k+1})   =  \bm d^{k+1}$, and inserting Equation (\ref{eq2}) into Equation (\ref{eq1}) yields
\begin{align}\label{eq3}
 L_\mu(\boldsymbol{\beta}^{k+1}, \boldsymbol{r}^{k+1}, \boldsymbol{d}^{k}) \le  L_\mu(\boldsymbol{\beta}^{k+1}, \boldsymbol{r}^{k}, \boldsymbol{d}^{k}) - \frac{\mu}{2}\|\bm r^k - \bm r^{k+1} \|_2^2.
\end{align}
Combining Equations (\ref{oc3}) and (\ref{eq3}), we get
\begin{align}
L_\mu(\boldsymbol{\beta}^{k+1}, \boldsymbol{r}^{k+1}, \boldsymbol{d}^{k+1}) \le L_\mu(\boldsymbol{\beta}^{k+1}, \boldsymbol{r}^{k}, \boldsymbol{d}^{k})  - \frac{\mu}{2}\|\bm r^k - \bm r^{k+1} \|_2^2 + \frac{1}{\mu}\|\bm d^k - \bm d^{k+1} \|_2^2.
\end{align}
From Lemma \ref{lem1} and recalling that $ \mathcal{L}_{\tau,*}(\bm r) = \sum_{i=1}^{n} \mathcal{L}_{\tau,*}(\bm r_i) $, we can derive
\begin{align}\label{rr}
 \|\bm d^k - \bm d^{k+1} \|_2 = \|\nabla \mathcal{L}_{\tau,*}(\bm r^k) - \nabla \mathcal{L}_{\tau,*}(\bm r^{k+1})\|_2  \le \frac{\sqrt n}{ \min\{c,\kappa\}}\|\bm r^k - \bm r^{k+1} \|_2.
\end{align}
Consequently, we have
\begin{align}
L_\mu(\boldsymbol{\beta}^{k+1}, \boldsymbol{r}^{k+1}, \boldsymbol{d}^{k+1}) \le L_\mu(\boldsymbol{\beta}^{k+1}, \boldsymbol{r}^{k}, \boldsymbol{d}^{k}) + \left(\frac{n}{\mu  \min\{c^2,\kappa^2 \}} - \frac{\mu}{2}\right)\|\bm r^k - \bm r^{k+1} \|_2^2.
\end{align}
Note that $\bm \beta^{k+1} $ is  the value that minimizes $L_\mu(\bm \beta, \bm r^k, \bm d^k) + \frac{1}{2}\left \| \bm \beta - \bm \beta^k \right \|_{\bm S}^2$ and $\bm S$ is a positive-definite matrix, then we get
\begin{align}
L_\mu(\bm \beta^{k+1}, \bm r^k, \bm d^k) \le L_\mu(\bm \beta^{k}, \bm r^k, \bm d^k).
\end{align}
As a result, 
\begin{align}
L_\mu(\boldsymbol{\beta}^{k+1}, \boldsymbol{r}^{k+1}, \boldsymbol{d}^{k+1}) \le L_\mu(\boldsymbol{\beta}^{k}, \boldsymbol{r}^{k}, \boldsymbol{d}^{k}) + \left(\frac{n}{\mu  \min\{c^2,\kappa^2 \}} - \frac{\mu}{2}\right)\|\bm r^k - \bm r^{k+1} \|_2^2.
\end{align}
Since we assume that $\mu >  \frac{\sqrt{2n}}{\min\{c,\kappa \}}$, then we have  $\left(\frac{n}{\mu \min\{c^2,\kappa^2 \}} - \frac{\mu}{2}\right)<0$, which implies that $\{L_\mu(\bm w^k) \}$ is monotonically nonincreasing. $\blacksquare$
\end{proof}

Next, we will prove that the iterative sequences generated by LADMM are bounded.

\begin{lem}\label{lem3}
(a) With $\mu >  \frac{\sqrt{2n}}{\min\{c,\kappa \}}$ and  $\bm X^\top \bm X > \underline{\mu} \bm I_p$$ (\underline{\mu} > 0)$, if $P_\lambda(\bm \beta)$ is the SCAD,MCP or Capped-$\ell_1$, then the sequence $\boldsymbol{w}^{k}=\{\boldsymbol{\beta}^k, \boldsymbol{r}^k, \boldsymbol{d}^k\}$ is generated by LADMM is  bounded.  \\

\qquad  \quad \ (b) With $\mu >  \frac{\sqrt{2n}}{\min\{c,\kappa \}}$,  if    ${P}_\lambda(|\bm \beta^k|)$ is Mnet, Snet or Cnet, then the sequence $\boldsymbol{w}^{k}=\{\boldsymbol{\beta}^k, \boldsymbol{r}^k, \boldsymbol{d}^k\}$ is generated by LADMM is  bounded.
\end{lem}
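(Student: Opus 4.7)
The plan is to exploit the monotone decrease of $L_\mu(\bm w^k)$ established in Lemma \ref{lem2} together with a completion-of-squares rewriting of the augmented Lagrangian to control each iterate in turn. Since $\mu > \sqrt{2n}/\min\{c,\kappa\}$, Lemma \ref{lem2} gives $L_\mu(\bm w^k) \le L_\mu(\bm w^0)$ for all $k$, so the first step is to turn this uniform bound into a quantitative bound on $\bm r^k$, $\bm d^k$, and $\bm \beta^k$.

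The crucial observation is that the second line of the optimality system \eqref{oc2} yields $\bm d^{k+1} = \nabla\mathcal{L}_{\tau,*}(\bm r^{k+1})$ for every $k \ge 0$. Because each component of $\nabla\mathcal{L}_{\tau,c}$ and $\nabla\mathcal{L}_{\tau,\kappa}$ takes values in $[\tau-1,\tau]$ by construction, we immediately obtain $\|\bm d^k\|_\infty \le \max\{\tau,1-\tau\} \le 1$ for $k \ge 1$, hence $\|\bm d^k\|_2^2 \le n$ uniformly. Completing the square in the augmented Lagrangian gives
\begin{equation*}
L_\mu(\bm w) = \mathcal{L}_{\tau,*}(\bm r) + P_\lambda(|\bm \beta|) + \frac{\mu}{2}\left\|\bm X\bm \beta + \bm r - \bm y - \bm d/\mu\right\|_2^2 - \frac{1}{2\mu}\|\bm d\|_2^2,
\end{equation*}
so combining with the uniform bound on $\|\bm d^k\|_2^2$ yields the key inequality
\begin{equation*}
\mathcal{L}_{\tau,*}(\bm r^k) + P_\lambda(|\bm \beta^k|) + \frac{\mu}{2}\left\|\bm X\bm \beta^k + \bm r^k - \bm y - \bm d^k/\mu\right\|_2^2 \le L_\mu(\bm w^0) + \frac{n}{2\mu} =: M.
\end{equation*}
Since all three terms on the left are nonnegative, each is bounded by $M$.

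From the piecewise definitions \eqref{c}–\eqref{ka} one checks that $\mathcal{L}_{\tau,*}(u)$ grows at least like $\min\{\tau,1-\tau\}|u|$ as $|u|\to\infty$, so $\mathcal{L}_{\tau,*}(\bm r^k) \le M$ forces $\{\bm r^k\}$ to be bounded (level sets of the coercive function are bounded). For part (b), the Snet/Mnet/Cnet penalty satisfies $P_\lambda(|\bm \beta|) \ge \tfrac{\lambda_2}{2}\|\bm \beta\|_2^2$, so $P_\lambda(|\bm \beta^k|) \le M$ directly yields $\|\bm \beta^k\|_2^2 \le 2M/\lambda_2$; combined with the uniform bounds on $\bm r^k$ and $\bm d^k$ this finishes the argument.

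The main obstacle, and the step that requires the extra restricted–eigenvalue hypothesis, is part (a): SCAD, MCP and Capped-$\ell_1$ are bounded as $|\beta|\to\infty$, so $P_\lambda(|\bm \beta^k|) \le M$ alone gives no control on $\|\bm \beta^k\|_2$. To get around this I will use the third (quadratic) term in the key inequality: $\|\bm X\bm \beta^k + \bm r^k - \bm y - \bm d^k/\mu\|_2 \le \sqrt{2M/\mu}$, and since $\bm r^k$, $\bm y$ and $\bm d^k$ have already been controlled, the triangle inequality gives a uniform bound on $\|\bm X\bm \beta^k\|_2$. The assumption $\bm X^\top\bm X \succ \underline{\mu}\bm I_p$ then implies $\underline{\mu}\|\bm \beta^k\|_2^2 \le \|\bm X\bm \beta^k\|_2^2$, yielding boundedness of $\{\bm \beta^k\}$ and completing part (a). Putting the pieces together, in both cases the full sequence $\bm w^k = \{\bm \beta^k,\bm r^k,\bm d^k\}$ is bounded.
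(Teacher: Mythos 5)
Your proposal is correct and follows essentially the same route as the paper's proof: monotonicity of $L_\mu$ from Lemma \ref{lem2}, the identity $\bm d^k=\nabla\mathcal{L}_{\tau,*}(\bm r^k)$ with the bounded gradient of the smooth quantile loss, completion of squares to isolate three nonnegative terms, coercivity of $\mathcal{L}_{\tau,*}$ for $\bm r^k$, the restricted-eigenvalue condition for $\bm\beta^k$ in case (a), and the ridge component of the penalty in case (b). Your version is somewhat more quantitative (explicit bound $\|\bm d^k\|_2^2\le n$ and explicit constant $M$), but the underlying argument is the same.
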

\begin{proof}
Since $\{L_\mu(\bm w^k) \}$ is monotonically nonincreasing, we have
\begin{align}
L_\mu(\bm w^0) \ge L_\mu(\bm w^k) &= \mathcal{L}_{\tau,*}(\bm r^k) +  {P}_\lambda(|\bm \beta^k|) -\langle { \bm d^k}, \bm{X\beta}^k + \bm{r^k} - \bm y \rangle + \frac{\mu }{2}\left\|\bm{X\beta}^k + \bm{r}^k - \bm y \right\|_2^2  \notag\\
& =  \mathcal{L}_{\tau,*}(\bm r^k) +  {P}_\lambda(|\bm \beta^k|) + \frac{\mu }{2}\left\|\bm{X\beta}^k + \bm{r}^k - \bm y  - \bm d^k/\mu \right\|_2^2 - \frac{\|\bm d^k \|_2^2}{2\mu}.
\end{align}
Because the value of  $\bm w^0$  is given, $L_\mu(\bm w^0)$  is a bounded constant. The first-order optimal condition of the algorithm in (\ref{oc2}) indicates that  $\nabla \mathcal{L}_{\tau,*}(\bm r^{k})  \ =  \bm d^{k}$.

(a)  From the expressions of ${P}_\lambda(|\bm \beta^k|)$ and $\nabla {P}_\lambda(|\bm \beta^k|)$, it can be seen that they are both positive and bounded functions.
Then $  \bm d^{k}$ and  ${P}_\lambda(|\bm \beta^k|)   - \frac{\|\bm d^k \|_2^2}{2\mu}$  are bounded.   Since $L_\mu(\bm w^0)$  is a bounded constant,    both $\mathcal{L}_{\tau,*}(\bm r^k) \ge 0$ and $\frac{\mu }{2}\left\|\bm{X\beta}^k + \bm{r}^k - \bm y  - \bm d^k/\mu \right\|_2^2 \ge 0$  are bounded.

Note that if $\| \bm r^k\|_2 \rightarrow \infty $, $\mathcal{L}_{\tau,*}(\bm r^k)  \rightarrow \infty$, then $ \bm r^k$  must be   bounded. Then, as long as  $\bm X^\top \bm X > \underline{\mu} \bm I_p $ , $\bm \beta^k$ must be bounded. Consequently,  $\bm w^k$ is   bounded. 

(b)    If    ${P}_\lambda(|\bm \beta^k|)$ is Mnet, Snet or Cnet,  then  ${P}_\lambda(|\bm \beta^k|) - \frac{\|\bm d^k \|_2^2}{2\mu} \to \infty$   as long as  $\| \bm \beta^k\|_2 \rightarrow \infty$. Hence, $ \bm \beta^k$ is  bounded.  For the same reason as  (a), we can also conclude that  both $\mathcal{L}_{\tau,*}(\bm r^k) \ge 0$ and $\frac{\mu }{2}\left\|\bm{X\beta}^k + \bm{r}^k - \bm y  - \bm d^k/\mu \right\|_2^2 \ge 0$  are bounded. $\mathcal{L}_{\tau,*}(\bm r^k) $ is   bounded, indicating that $\bm r^k$ is bounded.  When $ \bm \beta^k$, $\bm r^k$ and $\frac{\mu }{2}\left\|\bm{X\beta}^k + \bm{r}^k - \bm y  - \bm d^k/\mu \right\|_2^2$  are all bounded, it can be deduced that $\bm d^k$ is also bounded. Consequently,  $\bm w^k$ is   bounded.  $\blacksquare$ 
\end{proof}

Now, we derive the conclusions $\lim_{k \rightarrow +\infty} \|\bm w^k - \bm w^{k+1} \|_2 = 0$, which is similar to \cite{Guan2018An}. However, this conclusion only ensures that the difference between $\bm w^k$ and $\bm w^{k+1}$ is close to 0, but it does not guarantee that $\bm w^k$ converges.
\begin{lem}\label{lem4}
Let the sequence $\boldsymbol{w}^{k}=\{\boldsymbol{\beta}^k, \boldsymbol{r}^k, \boldsymbol{d}^k\}$ be generated by LADMM, then 
\begin{align*}
\sum_{k=0}^{\infty}\|\bm w^k - \bm w^{k+1} \|_2^2 < +\infty \ \text{and} \ \lim_{k \rightarrow +\infty} \|\bm w^k - \bm w^{k+1} \|_2 = 0.
\end{align*}
\end{lem}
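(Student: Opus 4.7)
\textbf{Proof proposal for Lemma \ref{lem4}.} The plan is to control each block of the iterate difference $\|\bm w^{k+1}-\bm w^k\|_2$ separately, starting from the residual block $\bm r$, passing to the dual block $\bm d$ via the Lipschitz identity already established, and finally recovering the primal block $\bm \beta$ from the constraint. The summable-square bound will give the second conclusion as a byproduct.

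First I would rewrite the descent estimate from Lemma \ref{lem2} as
$$C\,\|\bm r^{k+1}-\bm r^k\|_2^2 \le L_\mu(\bm w^k)-L_\mu(\bm w^{k+1}),\qquad C:=\tfrac{\mu}{2}-\tfrac{n}{\mu\min\{c^2,\kappa^2\}}>0,$$
which uses the assumption $\mu>\sqrt{2n}/\min\{c,\kappa\}$. Because $\{\bm w^k\}$ is bounded (Lemma \ref{lem3}) and $\mathcal L_{\tau,*}$, $P_\lambda$ are continuous and nonnegative, the sequence $\{L_\mu(\bm w^k)\}$ is bounded below; combined with the monotonicity from Lemma \ref{lem2}, it converges to some finite $L^*$. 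Telescoping the descent inequality then gives
$$C\sum_{k=0}^{\infty}\|\bm r^{k+1}-\bm r^k\|_2^2 \le L_\mu(\bm w^0)-L^* <\infty,$$
so $\sum_k\|\bm r^{k+1}-\bm r^k\|_2^2<\infty$ and $\|\bm r^{k+1}-\bm r^k\|_2\to 0$.

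Next, the key identity $\bm d^{k+1}=\nabla\mathcal L_{\tau,*}(\bm r^{k+1})$ together with the Lipschitz bound $(\ref{rr})$ that the authors have already established yields $\|\bm d^{k+1}-\bm d^k\|_2\le \frac{\sqrt n}{\min\{c,\kappa\}}\|\bm r^{k+1}-\bm r^k\|_2$. Squaring and summing propagates the summability to the dual block, and in particular $\|\bm d^{k+1}-\bm d^k\|_2\to 0$. For the primal block I would use the dual update $\bm X\bm\beta^{k+1}=\bm y-\bm r^{k+1}+\mu^{-1}(\bm d^k-\bm d^{k+1})$ and its one-step shift, subtracting to obtain
$$\bm X(\bm\beta^{k+1}-\bm\beta^k)=(\bm r^k-\bm r^{k+1})+\mu^{-1}(\bm d^k-\bm d^{k+1})-\mu^{-1}(\bm d^{k-1}-\bm d^k).$$
Squaring, using the elementary inequality $\|a+b+c\|^2\le 3(\|a\|^2+\|b\|^2+\|c\|^2)$, and invoking the restricted-eigenvalue hypothesis $\bm X^\top\bm X\succ\underline\mu\bm I_p$ (in force for Theorem \ref{TH2}) gives
$$\underline\mu\,\|\bm\beta^{k+1}-\bm\beta^k\|_2^2 \le 3\bigl(\|\bm r^{k+1}-\bm r^k\|_2^2 + \mu^{-2}\|\bm d^{k+1}-\bm d^k\|_2^2+\mu^{-2}\|\bm d^k-\bm d^{k-1}\|_2^2\bigr).$$
Summing in $k$ and using the two summability facts already proved yields $\sum_k\|\bm\beta^{k+1}-\bm\beta^k\|_2^2<\infty$ and hence $\|\bm\beta^{k+1}-\bm\beta^k\|_2\to 0$. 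Combining the three blocks proves both assertions of the lemma.

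The main technical step is the primal block: the residual update only controls $\bm X(\bm\beta^{k+1}-\bm\beta^k)$, so without the lower restricted-eigenvalue condition on $\bm X^\top\bm X$ one cannot recover $\|\bm\beta^{k+1}-\bm\beta^k\|_2$ (this is precisely why that assumption was imposed in Theorem \ref{TH2}). The one minor bookkeeping nuisance is that the identity for $\bm X(\bm\beta^{k+1}-\bm\beta^k)$ involves two consecutive dual differences, so the square-summability in $k\ge 1$ must be established by bounding a shifted tail, which causes no loss once $\sum_k\|\bm d^{k+1}-\bm d^k\|_2^2<\infty$ is in hand.
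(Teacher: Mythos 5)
Your proposal is correct and follows essentially the same route as the paper's own proof: telescoping the descent inequality from Lemma \ref{lem2} after establishing convergence of $\{L_\mu(\bm w^k)\}$, transferring summability to the dual block via the Lipschitz identity (\ref{rr}), and recovering the primal block from the differenced dual update together with the restricted-eigenvalue condition $\bm X^\top\bm X>\underline{\mu}\bm I_p$. The only cosmetic difference is that you obtain the lower bound on $L_\mu(\bm w^k)$ directly from boundedness and continuity, whereas the paper argues through a cluster point and a convergent subsequence; both are valid.
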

\begin{proof}
From Lemma \ref{lem2} , we know  $\{L_\mu(\bm w^{k})\}_{k=1}^{+\infty}$ is a monotonically nondecreasing sequence. Next, we will determine its lower bound to demonstrate that  $\{L_\mu(\bm w^{k})\}_{k=1}^{+\infty}$ is convergent.

Since $\{\bm w^k\}$ is bounded, it has at least one cluster point. We assume $\bm w^*$ to be an arbitrary cluster point of $\{\bm w^k\}$. Let $\{\bm w^{k_j}\}$ represent the subsequence of $\{\bm w^k\}$ that converges to $\bm w^*$, i.e. $\bm w^{k_j} \rightarrow \bm w^*$. Since $L_\mu$ is a continuous function and $\{L_\mu(\bm w^k)\}_{k=1}^{+\infty}$  a monotonically nondecreasing sequence, then we have
$$L_\mu(\bm w^{k_j}) \ge L_\mu( \lim_{j  \rightarrow \infty}\bm w^{k_j})  = L_\mu( \bm w^*).$$
As a result,  $\{L_\mu(\bm w^{k_j})\}_{j=1}^{+\infty}$ has a lower bound, which, together with the fact that  $\{L_\mu(\bm w^{k_j})\}_{j=1}^{+\infty}$  is nonincreasing, means that   $\{L_\mu(\bm w^{k_j})\}_{j=1}^{+\infty}$  converges to $L_\mu( \bm w^*)$.  Considering that $\{L_\mu(\bm w^{k})\}_{k=1}^{+\infty}$ is monotonically nonincreasing, it follows that $L_\mu(\bm w^*)$ also serves as the lower bound for $\{L_\mu(\bm w^{k})\}_{k=1}^{+\infty}$.
Thus,  we can  conclude  that $\{L_\mu(\bm w^{k})\}_{k=1}^{+\infty}$ converges to   $L_\mu(\bm w^{*})$ and $L_\mu(\bm w^{k}) \ge L_\mu(\bm w^{*})$. 

Rearranging terms of Equation (\ref{key}) yields
\begin{align*}
 \left(  \frac{\mu}{2} - \frac{n}{\mu  \min\{c^2,\kappa^2 \}} \right)\|\bm r^k - \bm r^{k+1} \|_2^2 \le  L_\mu(\bm w^{k}) - L_\mu(\bm w^{k+1}),
\end{align*}
and summing up for $k=0, \dots, +\infty$, it follows
$$\left(  \frac{\mu}{2} - \frac{n}{\mu  \min\{c^2,\kappa^2 \}} \right)  \sum_{k=0}^{+\infty}\|\bm r^k - \bm r^{k+1} \|_2^2 \le   L_\mu(\bm w^{0}) -  L_\mu(\bm w^{*}) < +\infty. $$
Since $\left(  \frac{\mu}{2} - \frac{n}{\mu  \min\{c^2,\kappa^2 \}} \right)>0$, we obtain 
$\sum_{k=0}^{+\infty}\|\bm r^k - \bm r^{k+1} \|_2^2 < +\infty $. Consequently, it follows from Equation (\ref{rr}) that $\sum_{k=0}^{+\infty}\|\bm d^k - \bm d^{k+1} \|_2^2 < +\infty $. Hence, to complete the proof, we just need to prove that $\sum_{k=0}^{+\infty}\|\bm \beta^k - \bm \beta^{k+1} \|_2^2 < +\infty $.
For the $m$-th local machine, we have
\begin{equation*}
\left\{ \begin{array}{l}
\bm{d}_m^{k+1}=\bm{d}_m^{k}-\mu(\bm{X}_m\bm{\beta}^{k+1} + \bm{r}_m^{k+1} - \bm y_m),\\
\bm{d}_m^{k}=\bm{d}_m^{k-1}-\mu(\bm{X}_m\bm{\beta}^{k} + \bm{r}_m^{k} - \bm y_m),
\end{array} \right.
\end{equation*}
and make a difference between the above two equations to obtain
\begin{align*}
\bm{d}_m^{k+1} - \bm{d}_m^{k} = \bm{d}_m^{k} - \bm{d}_m^{k-1} - \mu \bm X_m(\bm \beta^{k+1} - \bm \beta^k) - \mu(\bm r_m^{k+1} - \bm r_m^k).
\end{align*}
It then follows that
\begin{align}\label{e41}
\| \mu \bm X_m(\bm \beta^{k+1} - \bm \beta^k) \|_2^2 &= \|(\bm{d}_m^{k} - \bm{d}_m^{k-1}) -  (\bm{d}_m^{k+1} - \bm{d}_m^{k}) - \mu(\bm r_m^{k+1} - \bm r_m^k)\|_2^2  \notag\\
& \le 3(\|\bm{d}_m^{k} - \bm{d}_m^{k-1} \|_2^2 + \|\bm{d}_m^{k+1} - \bm{d}_m^{k} \|_2^2 + \|\mu(\bm r_m^{k+1} - \bm r_m^k) \|_2^2). 
\end{align}
Since  $\bm X_m^\top \bm X_m > \underline{\mu} \bm I_p $,  we have
\begin{align}\label{e42}
\| \mu \bm X_m(\bm \beta^{k+1} - \bm \beta^k) \|_2^2 \ge \mu^2  \underline{\mu} \|\bm \beta^{k+1} - \bm \beta^k \|_2^2.
\end{align}
Then, combining Equations (\ref{e41}) and (\ref{e42}), we  obtain
$$ \sum_{k=0}^{+\infty}\|\bm \beta^k - \bm \beta^{k+1} \|_2^2 < +\infty.  $$
Since $\boldsymbol{w}^{k}=\{\boldsymbol{\beta}^k, \boldsymbol{r}^k, \boldsymbol{d}^k\}$, we have established that 
$$\sum_{k=1}^{\infty}\|\bm w^k - \bm w^{k+1} \|_2^2 \le \infty,$$
which indicates that  $\lim_{k \rightarrow +\infty} \|\bm w^k - \bm w^{k+1} \|_2 = 0$.   $\blacksquare$ 
\end{proof}
 From Lemma \ref{lem2}, we know that  $\bm w^k$ has at least one limit point. Next, we will prove some properties about the limit points.
\begin{lem}\label{lem5}
Let  $S(\bm w^ \infty)$ denote the set of the limit points of $\{\bm w^k\}$, and $C(\bm w^*)$  denote  the set of  critical points of $L_\mu(\bm w)$. $S(\bm w^ \infty) \subset C(\bm w^*)$, and $L_\mu(\bm w)$ is finite and constant on $S(\bm w^ \infty)$.  
\end{lem}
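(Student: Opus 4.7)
The plan is to combine the monotonicity result from Lemma \ref{lem2} with the asymptotic regularity result from Lemma \ref{lem4} and then pass to the limit in the per-iteration optimality conditions displayed in equation (\ref{oc2}). The boundedness of $\{\bm w^k\}$ from Lemma \ref{lem3} guarantees that $S(\bm w^\infty)$ is nonempty, and the continuity of $L_\mu$ together with $\|\bm w^{k+1}-\bm w^k\|_2\to 0$ will let me exchange limits freely. I expect that almost everything is routine once we recognize that the awkward discrepancy terms $\bm S(\bm \beta^{k+1}-\bm \beta^k)$ and $\mu\bm X^\top(\bm r^k-\bm r^{k+1})$ in (\ref{oc2}) all vanish in the limit by Lemma \ref{lem4}.

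For the constancy and finiteness claim, I would argue as follows. By Lemma \ref{lem2}, $\{L_\mu(\bm w^k)\}$ is monotonically nonincreasing. Since Lemma \ref{lem3} tells us $\{\bm w^k\}$ is bounded, any cluster point $\bm w^*\in S(\bm w^\infty)$ is finite, and by continuity of $L_\mu$ we have $L_\mu(\bm w^{k_j})\to L_\mu(\bm w^*)$ along any convergent subsequence $\bm w^{k_j}\to\bm w^*$. This supplies a lower bound to the monotone sequence $\{L_\mu(\bm w^k)\}$, so the whole sequence converges to some finite limit $L^*$. Then for any $\bm w^*\in S(\bm w^\infty)$, again by continuity, $L_\mu(\bm w^*)=\lim_{j\to\infty}L_\mu(\bm w^{k_j})=L^*$, proving $L_\mu\equiv L^*$ on $S(\bm w^\infty)$.

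For the inclusion $S(\bm w^\infty)\subset C(\bm w^*)$, fix $\bm w^*=(\bm \beta^*,\bm r^*,\bm d^*)\in S(\bm w^\infty)$ and a subsequence $\bm w^{k_j}\to\bm w^*$. By Lemma \ref{lem4}, $\bm w^{k_j+1}\to\bm w^*$ as well, and all the difference terms $\bm \beta^{k_j+1}-\bm \beta^{k_j}$, $\bm r^{k_j}-\bm r^{k_j+1}$, $\bm d^{k_j}-\bm d^{k_j+1}$ tend to zero. Passing to the limit in (\ref{oc2}) together with the dual-update relation $\bm X\bm \beta^{k_j+1}+\bm r^{k_j+1}-\bm y=(\bm d^{k_j}-\bm d^{k_j+1})/\mu\to\bm 0$, I obtain $\bm X\bm \beta^*+\bm r^*-\bm y=\bm 0$, $\nabla\mathcal{L}_{\tau,*}(\bm r^*)=\bm d^*$, and $\bm 0\in\partial P_\lambda(|\bm \beta^*|)-\bm X^\top\bm d^*$. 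These are exactly the first-order optimality conditions $\bm 0\in\partial_\beta L_\mu(\bm w^*)$, $\bm 0=\nabla_r L_\mu(\bm w^*)$, $\bm 0=\nabla_d L_\mu(\bm w^*)$, so $\bm w^*\in C(\bm w^*)$.

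The main obstacle is the passage to the limit inside the subdifferential inclusion for the nonconvex penalty $P_\lambda$. Continuity of $\nabla\mathcal{L}_{\tau,*}$ (Lemma \ref{lem1}) handles the smooth part directly, but for SCAD, MCP, Capped-$\ell_1$ and their Snet/Mnet/Cnet variants we must invoke the outer semicontinuity (closedness) of the limiting subdifferential, namely that if $\bm v^{k_j}\in\partial P_\lambda(|\bm \beta^{k_j+1}|)$, $\bm \beta^{k_j+1}\to\bm \beta^*$ and $\bm v^{k_j}\to\bm v^*$, then $\bm v^*\in\partial P_\lambda(|\bm \beta^*|)$. Since all penalties under consideration are continuous and lower-$C^2$ (or at least semi-algebraic), this property is standard, and the existence of a convergent subsequence of the selectors $\bm v^{k_j}$ follows by rearranging (\ref{oc2}) to express $\bm v^{k_j}=\bm X^\top\bm d^{k_j+1}-\mu\bm X^\top(\bm r^{k_j}-\bm r^{k_j+1})-\bm S(\bm \beta^{k_j+1}-\bm \beta^{k_j})$, which manifestly converges to $\bm X^\top\bm d^*$.
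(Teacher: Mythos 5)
Your proposal is correct and follows essentially the same route as the paper: pass to the limit in the optimality conditions (\ref{oc2}) using Lemma \ref{lem4} to kill the difference terms, and use monotonicity plus continuity of $L_\mu$ to get a finite constant value on the limit set. The only difference is that you explicitly justify the outer semicontinuity of $\partial P_\lambda$ when taking limits in the subdifferential inclusion, a point the paper's proof passes over silently; this is a welcome extra level of care rather than a divergence in method.
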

\begin{proof}
We say $\bm w^*$ is the critical point  of $L_\mu(\bm w)$, if it satisfies
\begin{equation}\label{kkt}
\left\{ \begin{array}{l}
\bm X^\top \bm d^* \ \in \partial{P}_\lambda(|\bm \beta^{*}|),\\
\bm d^* \ = \nabla \mathcal{L}_{\tau,*}(\bm r^{*}) ,\\
\bm 0= \ \bm{X}\bm{\beta}^{*} + \bm{r}^{*} - \bm y.
\end{array} \right.
\end{equation}
Recalling the  optimality conditions for the $k+1$-iteration of LADMM in (\ref{oc}),  we get
\begin{equation}\label{occ}
\left\{ \begin{array}{l}
\bm 0 \ \in \partial{P}_\lambda(|\bm \beta^{k+1}|) - \bm X^\top \bm d^{k+1} + \bm S(\bm \beta^{k+1} - \bm \beta^k),\\
\bm 0 \ = \nabla \mathcal{L}_{\tau,*}(\bm r^{k+1}) - \bm d^{k+1} ,\\
\bm{d}^{k+1}=\bm{d}^{k}-\mu(\bm{X}\bm{\beta}^{k+1} + \bm{r}^{k+1} - \bm y).
\end{array} \right.
\end{equation}
For any limit point  $\bm w^ \infty$, from Lemma \ref{lem4} and (\ref{occ}), we have
\begin{equation}\label{occ2}
\left\{ \begin{array}{l}
\bm X^\top \bm d^\infty \ \in \partial{P}_\lambda(|\bm \beta^\infty|),\\
\bm d^\infty \ = \nabla \mathcal{L}_{\tau,*}(\bm r^\infty),\\
\bm{0} =\bm{X}\bm{\beta}^\infty + \bm{r}^\infty - \bm y.
\end{array} \right.
\end{equation}
As a consequence,  $ S(\bm w^ \infty) \subset S(\bm w^*)$. Note that  $L_\mu(\bm w^{k})$ is nonincreasing and convergent, it follows that  $L_\mu(\bm w)$ is finite and constant on $S(\bm w^ \infty)$. $\blacksquare$ 
\end{proof}

Now, we are ready to prove Theorem \ref{TH2}. Despite the LADMM algorithm having an additional term of $\frac{1}{2}\left \| \bm \beta - \bm \beta^k \right \|_{\bm S}^2$ compared to the ADMM algorithm in \cite{Guo2016Convergence}, the convergence proof is similar to that of Theorem 3.1 in \cite{Guo2016Convergence}.
Following the same steps as their proof, we  can conclude that 
\begin{align}
\sum_{k=0}^{+\infty}\|\bm w^{k+1} - \bm w^k  \|_2 < + \infty.
\end{align}
That is, $\{\bm w^k\}$  is a Cauchy sequence and it converges. Together with Lemma \ref{lem5}, $\{\bm w^k\}$ converges to a critical point of $L_\mu(\bm w)$.

\section{Supplementary Numerical Experiments}\label{C}
\subsection{Supplementary Experiments for Section \ref{sec41}}\label{C.1}
In this subsection, we analyze the performance of LADMM in comparison to other ADMM variants when solving  quantile regression with the MCP ($a=3$) penalty. Table \ref{tab6} presents the results for the nonparallel version ($M=1$), while Table \ref{tab7} displays the results for the parallel version ($M>1$). The results of Table \ref{tab6} and Table \ref{tab7} demonstrate that our proposed LADMM algorithm  has significant advantages over existing algorithms when solving quantile regressions with MCP.
\begin{table}[H]
\caption{\footnotesize{Comparison of various ADMMs with MCP penalty of different data sizes.}}\label{tab6}
\centering
\resizebox{1\columnwidth}{!}{
\begin{threeparttable}
\begin{tabular}{lllllllllll}
\toprule[1.5pt]
$(n,p)$ & \multicolumn{5}{l}{(30000,1000)} & \multicolumn{5}{l}{(1000,30000)} \\ 
\cmidrule(lr){1-1}\cmidrule(lr){2-6}\cmidrule(lr){7-11}
$\tau=0.3$ & P1 & P2 & AE & Ite & Time & P1 & P2 & AE & Ite & Time \\ 
\midrule[1pt]
QPADM & 100 & 100 & \bf 0.013(0.01) & 95.9(9.8) & 34.67(5.7) & 0 & 100 & 6.68(0.97) & 500(0.0) & 2013.4(59.4)\\ 
QPADMslack & 100 & 100 & 0.042(0.02) & 47.2(6.1) & 38.72(5.3) & 0 & 100 & 7.13(0.89) & 226(13.8) & 1234.8(36.7) \\ 
FSLADMM & 100 & 100 & 0.015(0.03) & 500(0.0) & 161.32(10.9) & 100 & 100 & 0.69(0.07) & 500(0.0) & 129.5(11.3)\\ 
LADMM & 100 & 100 & 0.014(0.01) & \bf 39.5 (3.0) & \bf 31.87(3.9) & 100 & 100 & \bf 0.38(0.05) & \bf 123(10.2) & \bf 56.3(4.6) \\ 
\midrule[1pt]
$\tau=0.5$ & P1 & P2 & AE & Ite & Time & P1 & P2 & AE & Ite &Time \\ 
\midrule[1pt]
QPADM & 0 & 100 &\bf 0.011(0.01) & 87.9(8.5) & 29.58(5.1) & 0 & 100 & 5.92(0.82) & 500(0.0) & 1967.5(60.1)\\ 
QPADMslack & 0 & 100 & 0.038(0.03) & 41.7(5.5) & 32.39(4.8) & 0 & 100 & 7.24(0.78) & 232(12.7)& 1094.7(40.2) \\ 
FSLADMM & 0 & 100 & 0.015(0.04) & 500(0.0) & 148.95(9.7) & 0 & 100 & 0.64(0.06) & 500(0.0) & 131.8(12.5)\\ 
LADMM & 0 & 100 & 0.012(0.01) & \bf 35.2 (2.7)& \bf 28.59(3.5) & 0 & 100 & \bf 0.35(0.03) & \bf 119(9.1) & \bf 57.4(5.1)\\
\midrule[1pt]
$\tau=0.7$ & P1 & P2 & AE & Ite & Time & P1 & P2 & AE & Ite &Time \\ 
\midrule[1pt]
QPADM & 100 & 100 & \bf 0.016(0.02) & 99.2(9.3) & 38.54(6.1) & 100 & 100 & 6.81(0.77) & 500(0.0) & 2123.7(58.9) \\ 
QPADMslack & 100 & 100 & 0.049(0.05) & 48.5(5.7) & 42.21(5.6) & 100 & 100 & 6.93(0.72) & 262(15.7)& 1182.6(49.3) \\ 
FSLADMM & 100 & 100 & 0.021(0.04) & 500(0.0) & 157.17(9.9) & 100 & 100 & 0.72(0.06) & 500(0.0) & 136.5(13.0) \\ 
LADMM & 100 & 100 & 0.018(0.01) & \bf 40.8 (3.1)& \bf 30.27(3.2) & 100 & 100 &\bf  0.39(0.03) & \bf 130(9.9) & \bf 60.2(4.9) \\
\toprule[1.5pt]
\end{tabular}
\begin{tablenotes}
        \footnotesize
        \item[*] The meanings of the notations used in this table are as follows: P1 (\%): proportion that $x_1$ is selected; P2 (\%): proportion that $x_6$, $x_{12}$, $x_{15}$, and $x_{20}$ are selected; AE: absolute estimation error; Ite: number of iterations; Time (s): CPU running time. Numbers in the parentheses represent the corresponding standard deviations, and the optimal solution is represented in bold.
\end{tablenotes}
\end{threeparttable}}
\end{table}

\begin{table}[H]
\caption{\footnotesize{Comparison of various parallel ADMMs with MCP penalty.}}\label{tab7}
\centering
\resizebox{1\columnwidth}{!}{
\begin{threeparttable}
\begin{tabular}{lllllllll}
\toprule[1.5pt]
$(n=500000,p=1000)$ & \multicolumn{4}{l}{$\tau=0.4$} & \multicolumn{4}{l}{$\tau=0.5$} \\ 
\cmidrule(lr){1-1}\cmidrule(lr){2-5}\cmidrule(lr){6-9}
$M=5$ & Nonzero & AE & Ite & Time & Nonzero & AE & Ite & Time \\ 
\midrule[1pt]
QPADM & 30.8(2.8) & 0.048(0.005) & 492.9(39.6) & 204.7(25.7) & 25.7(2.2) & 0.043(0.004) & 495.9(39.7) & 212.6(30.2)\\ 
QPADMslack & 28.1(3.1) & 0.045(0.006) & 335.1(26.8) & 127.2(16.5) & 24.1(1.8) & 0.042(0.005) & 347.2(45.1) & 135.2(15.9) \\ 
LADMM &\bf 5.3(0.3) &\bf 0.029(0.001) & \bf 79.5 (4.2) & \bf 41.2(4.1) & \bf 4.1(0.07) & \bf 0.024(0.001) &\bf  82.6 (5.3) & \bf 39.7(3.8) \\ 
\midrule[1pt]
$M=20$ & Nonzero & AE & Ite & Time & Nonzero & AE & Ite & Time \\ 
\midrule[1pt]
QPADM & 32.6(3.5) & 0.051(0.06) & 500(0.0) & 129.8(12.6) & 28.3(2.4) & 0.048(0.05) & 500(0.0) & 129.6(15.0)\\ 
QPADMslack & 29.4(3.4) & 0.048(0.07) & 341.4(35.2) & 62.9(8.5) & 25.9(2.1) & 0.047(0.05) & 402.3(51.5) & 72.9(7.8) \\ 
LADMM & \bf 5.3(0.3) &\bf  0.029(0.001) & \bf 80.4 (4.4) &\bf 28.5(3.2) &\bf 4.0(0.07) & \bf 0.024(0.001) &\bf 83.3 (5.4) &\bf 27.5(3.5)\\
\midrule[1pt]
$M=100$ & Nonzero & AE & Ite & Time & Nonzero & AE & Ite & Time \\ 
\midrule[1pt]
QPADM & 35.1(4.0) & 0.076(0.08) & 500(0.0) & 48.54(4.1) & 30.8(2.7) & 0.052(0.06) & 500(0.0) & 38.4(3.9) \\ 
QPADMslack & 32.7(3.8) & 0.069(0.06) & 448.5(56.2) & \bf 18.21(2.5) & 28.6(3.2) & 0.049(0.06) & 441.4(62.1) & \bf 20.1(2.5) \\ 
LADMM &\bf 5.3(0.3) & \bf 0.030(0.001) & \bf 84.5 (4.7) & 19.27(2.6) & \bf 4.1(0.07) &\bf 0.025(0.001) & \bf 84.9 (5.6) & 20.7(2.2) \\
\toprule[1.5pt]
\end{tabular}
\begin{tablenotes}
        \footnotesize
        \item[*] P1 and P2 are not presented in Table  \ref{tab7} because all methods have a value of 100 for these two metrics. 
The Nonzero, AE, and Ite of LADMM are not greatly affected by the $M$ value. 
\end{tablenotes}
\end{threeparttable}}
\end{table}

\subsection{Supplementary Experiments for Section \ref{sec42}}\label{C.2}
In this subsection, we compare the performance of LADMM and ILAMM in solving least squares regressions with MCP ($a=3$) and Capped-$\ell_1$ ($a=3$) when the errors follow the Lognormal distribution. Table  \ref{tab8} shows that our algorithm performs better than ILAMM in solving these two regression models. 
\begin{table}[H]
\caption{\footnotesize{Comparison of LS-MCP and LS-Capped-$\ell_1$ Calculated by LADMM and ILAMM}}\label{tab8}
\centering
\resizebox{1\columnwidth}{!}{
\begin{threeparttable}
\begin{tabular}{llllllllll}
\toprule[1.5pt]
& & \multicolumn{4}{l}{MCP} & \multicolumn{4}{l}{Capped-$\ell_1$} \\ 
\cmidrule(lr){3-6}\cmidrule(lr){7-10}
Algorithm & $(n,p)$ & FP & FN & AE & Time & FP & FN & AE & Time \\ 
\midrule[1pt]
\multirow{4}*{ILAMM} & (100,1000) & 0 & 1 & \bf 0.43(0.08) & 8.6(0.9) & 0 & 1 & 0.52(0.07) & 9.1(1.0) \\
& (100,10000) & 0 & 2 & 0.61(0.10) & 164.2(10.6) & 0 & 2 & 0.67(0.12) & 157.3(9.8) \\
& (1000,10000) & 0 & 2 & 0.53(0.08) & 252.5(21.9) & 0 & 2 & 0.58(0.09) & 283.8(27.5) \\
& (10000,10000) & 0 & 0 & 0.09(0.02) & 1846.8(138.6) & 0 & 0 & 0.13(0.04) & 2014.5(142.4) \\
\midrule[1pt]
Algorithm & $(n,p)$ & FP & FN & AE & Time & FP & FN & AE & Time \\ 
\midrule[1pt]
\multirow{4}*{LADMM} & (100,1000) & 0 & 0 & 0.44(0.04) & \bf 1.4(0.07) & 0 & 0 & \bf 0.39(0.02) & \bf 1.2(0.09) \\
& (100,10000) & 0 & 0 & \bf 0.52(0.05) & \bf 7.6(1.12) & 0 & 0 & \bf 0.45(0.04) & \bf 5.5(1.01) \\
& (1000,10000) & 0 & 0 &\bf 0.48(0.07) &\bf 15.8(2.65) & 0 & 0 & \bf 0.41(0.05) &\bf  13.1(1.98) \\
& (10000,10000) & 0 & 0 & \bf 0.05(0.01) &\bf 35.7(5.48) & 0 & 0 &\bf 0.04(0.01) & \bf 25.5(4.70) \\
\toprule[1.5pt]
\end{tabular}
\begin{tablenotes}
        \footnotesize
        \item[*] False Positive (FP) refers to the number of variables with a coefficient of zero that are mistakenly included in the final model, while False Negative (FN) refers to the number of variables with non-zero coefficients that are omitted from the model. 
\end{tablenotes}
\end{threeparttable}}
\end{table}

\subsection{Supplementary Experiments for Section \ref{sec43}}\label{C.3}
In this subsection, we compare the performance of LADMM and cdaSQR in solving smooth quantile regressions with Mnet ($a=3$) penalty. Table \ref{tab9} demonstrates the results. However, cdaSQR is unable to solve the Cnet ($a=3$) penalty. Therefore, we solely present the results of LADMM in solving Cnet regression in Figure \ref{fig6}. Both the table and the figure illustrate the superiority of our algorithm.
\begin{table}[H]
\caption{\footnotesize{Comparison of  LADMM and cdaSQR with $\delta=0.5$ and $\tau=0.5$}}\label{tab9}
\centering
\resizebox{1\columnwidth}{!}{
\begin{threeparttable}
\begin{tabular}{llllllllll}
\toprule[1.5pt]
& & \multicolumn{4}{l}{cdaSQR} & \multicolumn{4}{l}{LADMM} \\ 
\cmidrule(lr){3-6}\cmidrule(lr){7-10}
Loss & $(n,p)$ & FP & FN & AE & Time & FP & FN & AE & Time \\ 
\midrule[1pt]
\multirow{3}*{$\mathcal{L}_{\tau,c}$ } & (5000,500) & 1.96 & 6.48 & \bf 0.36(0.03) & \bf 3.46(0.23) &\bf 0 & \bf 0.08 & 0.37(0.02) & 3.72(0.29) \\
& (10000,1000) & 2.10 & 7.21 & 0.48(0.05) & 5.13(0.61) & \bf 0 &\bf 0.09 &\bf 0.45(0.04) &\bf 4.85(0.55) \\
& (20000,2000) & 3.08 & 9.59 & 0.62(0.07) & 24.36(3.28) & \bf 0 & \bf 0.10 & \bf 0.53(0.06) &\bf 19.82(2.7) \\
\midrule[1pt]
Loss & $(n,p)$ & FP & FN & AE & Time & FP & FN & AE & Time \\ 
\midrule[1pt]
\multirow{3}*{$\mathcal{L}_{\tau,\kappa}$ } & (5000,500) & 2.01 &  6.07 & \bf 0.39(0.02) & 4.12(0.32) & \bf 0 & \bf 0.05 & 0.42(0.03) &\bf 3.64(0.32) \\
& (10000,1000) & 2.54 & 7.42 & 0.51(0.06) & 6.27(0.95) & \bf 0 & \bf 0.06 & \bf 0.46(0.05) & \bf 4.58(0.57) \\
& (20000,2000) & 2.92 & 9.43 & 0.64(0.08) & 26.72(3.73) & \bf 0 & \bf 0.08 & \bf 0.57(0.06) & \bf 21.36(2.9) \\
\toprule[1.5pt]
\end{tabular}
\end{threeparttable}}
\end{table}

\begin{figure}[htbp]
\centering
\includegraphics[width=18cm]{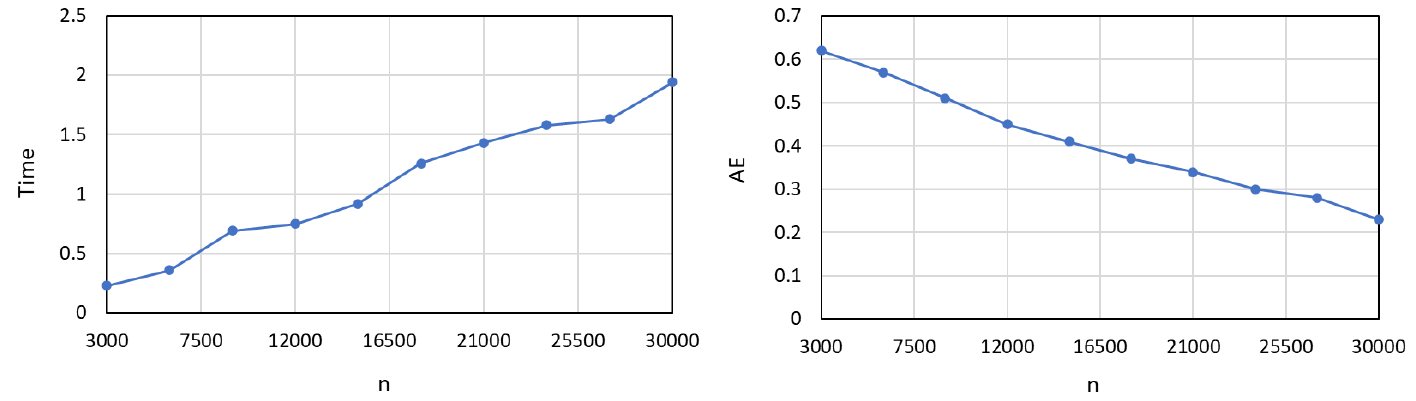}
\caption{\footnotesize{Variation of computing time and  AE with $n$}}\label{fig6}
\end{figure}

\newpage
\subsection{Supplementary Experiments for Section \ref{sec5}}\label{C.4}
In this subsection, we compare the performance of several parallel ADMM algorithms in solving MCP ($a=3$) regressions using an online publicly available dataset. The results ($M=100$) are presented in Table \ref{tab10}. The prediction error (PE) is calculated as follows: $\text{PE} = \frac{1}{n_{test}} \sum_{i=1}^{n_{test}} |y_i - \hat{y}_i|$, where $n_{test} = 4644$ represents the size of the test set. The experimental results presented in Table \ref{tab10} indicate that our LADMM algorithm exhibits competitive performance compared to other parallel algorithms when solving MCP ($a=3$) regressions on this real dataset.
\begin{table}[H]
\caption{\footnotesize{Analysis of the news popularity data under the MCP penalty.}}\label{tab10}
\centering
\resizebox{1\columnwidth}{!}{
\begin{threeparttable}
\begin{tabular}{llllllll}
\toprule[1.5pt]
& Algorithm & $x_{14}$ & $x_{27}$ & Nonzero & PE & Ite & Time \\ 
\midrule[1pt]
\multirow{3}*{$\tau=0.5$} & QRADM & -0.023(0.003) & 0.056(0.004) & 33.9(2.30) & 0.20(0.01) & 230.2(44.3) & 3.62(0.48)\\
& QRADMslack & -0.020(0.001) & 0.052(0.002) & 35.1(1.95) & 0.20(0.01) & 118.3(10.6) & 2.01(0.36)\\
& $\text{LADMM}_{\rho}$ & -0.011(0.001) & 0.024(0.001) & 12.3(0.66) & \bf 0.18(0.01) & 73.6(5.9) & \bf 1.11(0.21)\\
& $\text{LADMM}_{\mathcal L_{\tau,c},\delta=1}$ & -0.013(0.001) & 0.026(0.002) & 13.1(0.56) & 0.19(0.01) & \bf 71.5(6.1) & 1.13(0.19)\\
& $\text{LADMM}_{\mathcal L_{\tau,\kappa},\delta=1}$ & -0.011(0.001) & 0.023(0.001) & 13.8(0.72) & 0.19(0.01) & 80.2(6.3) & 1.12(0.22)\\
\midrule[1pt]
& Algorithm & $x_{14}$ & $x_{27}$ & Nonzero & FP & Ite & Time \\ 
\midrule[1pt]
\multirow{3}*{$\tau=0.9$} & QRADM & -0.138(0.013) & 0.317(0.009) & 38.9(2.92) & 0.24(0.01) & 500(0.00) & 3.93(0.52)\\
& QRADMslack & -0.120(0.009) & 0.289(0.007) & 40.1(1.85) & 0.24(0.01) & 472.3(51.6)& 2.56(0.37)\\
& $\text{LADMM}_{\rho}$ & -0.043(0.002) & 0.123(0.004) & 13.6(0.71) & \bf 0.21(0.01) & \bf 66.8(7.2) & 1.23(0.21)\\
& $\text{LADMM}_{\mathcal L_{\tau,c},\delta=1}$ & -0.050(0.002) & 0.126(0.004) & 13.9(0.69) & 0.23(0.01) & 73.5(8.3) & \bf 0.95(0.21)\\
& $\text{LADMM}_{\mathcal L_{\tau,\kappa},\delta=1}$ & -0.053(0.003) & 0.128(0.005) & 14.2(0.80) & 0.23(0.01) & 76.1(8.6) & 0.23(0.25)\\
\toprule[1.5pt]
\end{tabular}

\end{threeparttable}}
\end{table}

\end{document}